\DeclarePairedDelimiter{\abs}{\lvert}{\rvert}
\DeclarePairedDelimiter{\norm}{\lVert}{\rVert}
\newcommand{\numberset}{\mathbb}
\newcommand{\N}{\numberset{N}} 
\newcommand{\R}{\numberset{R}} 
\renewcommand{\phi}{\varphi} 
\renewcommand{\chi}{\mathcal{X}}
\newcommand{\C}{\numberset{C}} 
\newcommand{\Z}{\numberset{Z}} 
\renewcommand{\epsilon}{\varepsilon}
\newcommand{\mass}{\mathrm{mass}}
\newcommand{\comass}{\mathrm{com}}
\newcommand{\nucl}{\mathrm{nucl}}
\DeclareMathOperator{\Div}{div}
\newtheorem{theorem}{Theorem}
\newtheorem{lemma}[theorem]{Lemma}
\newtheorem{maintheorem}{Theorem}
\theoremstyle{definition}
\newtheorem{definition}[theorem]{Definition}
\newtheorem{remark}[theorem]{Remark}
\lbrace\begin{aligned}}%
\renewcommand{\d}{\mathrm{d}}
\newcommand{\D}{\mathrm{D}}
\renewcommand{\u}{\mathbf{u}}
\newcommand{\ttheta}{{\boldsymbol{\theta}}}
\newcommand{\mres}{\mathbin{\vrule height 1.6ex depth 0pt width
0.13ex\vrule height 0.13ex depth 0pt width 1.3ex}}
\let\div\relax
\DeclareMathOperator{\div}{div}
\DeclareMathOperator{\BV}{BV}
\DeclareMathOperator{\SBV}{SBV}
\DeclareMathOperator{\spt}{spt}
\DeclareMathOperator{\supp}{supp}
\begin{document}
\title{Energy-minimizing torus-valued maps with prescribed singularities, Plateau's problem, and BV-lifting}
\author{Giacomo Canevari and Van Phu Cuong Le}


\date{\today}

\maketitle

\begin{abstract}
In this paper, we investigate the relation between energy-minimizing
torus-valued maps with prescribed singularities,
the lifting problem for torus-valued maps in the space~BV, 
and Plateau's problem for vectorial currents, in codimension one. 
First, we show that the infimum of the $W^{1,1}$-seminorm
among all maps with values in the~$k$-dimensional flat torus
and prescribed topological singularities~$S$ is equal to the 
minimum of the mass among all \textit{normal} $\R^k$-currents, 
of codimension one, bounded by~$S$. 
Then, we show that the minimum of the $\BV$-energy
among all liftings of a given torus-valued $W^{1,1}$-map~$\u$
can be expressed in terms of the minimum mass among all
\textit{integral} $\Z^k$-currents, of codimension one,
bounded by the singularities of~$\u$.
As a byproduct of our analysis, we provide a bound
for the solution of the integral Plateau problem,
in codimension one, in terms of Plateau's problem for normal currents.

\medskip
\noindent
\textbf{Keywords:} Torus-valued maps, lifting problems,
Plateau's problem, $G$-currents.\\
\textbf{Mathematics Subject Classification:} 49Q10, 49Q15, 49Q20, 58E20.
\end{abstract}
\section{Introduction}
In the seminal paper of Brezis, Coron, and Lieb~\cite{BrezisCoronLieb}, the authors showed the equivalence between the energy of harmonic maps with prescribed singularities and \textit{minimal connections} between those points. More precisely, in the ambient space $\R^d$, $d\geq 2$, 
given distinct points $P_{1},\ldots,P_{k}$ and $N_{1},\ldots,N_{k}$
(which stand for the topological singularities),
we consider the set of sphere-valued maps
\[
 V:=\left\lbrace \u\in C^{1}(\R^{d} \setminus \lbrace P_{1},\ldots,P_{k}, N_{1},\ldots, N_{k} \rbrace \, , \mathbb{S}^{d-1}), \, \deg(\u,P_{i})=1, \, \deg(\u,N_{i})=-1\right\rbrace \! ,
\]
where~$\deg(\u, P)$ denotes the degree of~$\u$ on a small 
sphere around the point~$P$.
Then, there holds
\begin{equation} \label{min_conn} 
 \inf \left\{ \int_{\R^{d}} |\nabla \u|^{d-1} \, \d x \mid \u \in V \right\}=(d-1)^{\frac{d-1}{2}}\alpha_{d-1}L, 
\end{equation}
where~$\alpha_{d-1}$ is the surface area of the unit sphere~$\mathbb{S}^{d-1}$ in $\R^d$ and~$L$ is the \textit{minimal connection} between $P_{1},\ldots,P_{k}$, and $N_{1},\ldots,N_{k}$ i.e. the solution of 
a Monge-Kantorovich problem in which $P_{1},\ldots,P_{k}$,
and $N_{1},\ldots,N_{k}$ play the role of marginals. 
More precisely, let $S_{k}$ be the set of all the permutations of~$\lbrace 1,\ldots, k \rbrace$. Then, $L$ is defined as
\begin{equation}\label{minimialconnection}
L:=\min_{\sigma \in S_k}\sum_{i=1}^k \abs{P_i-N_{\sigma(i)}}.
\end{equation}
The equality~\eqref{min_conn} carries over, 
with few modifications, to the case the points~$P_{1},\ldots,P_{k}$,
$N_{1},\ldots,N_{k}$ are not distinct; 
see~\cite{BrezisCoronLieb} for the details.
This result was later recast and generalised by Almgren, Browder, and Lieb~\cite{abh}, who interpreted the minimal connection~$L$ as a solution of \textit{Plateau}'s problem --- that is, a minimizer of the mass among all $1$-dimensional 
integral currents~$T$ whose boundary is defined
by the point singularities at~$P_i$, $N_i$:
\begin{equation} \label{Plateau1} 
L=\inf \left\{ \mathbb{M}(T) \mid \partial T=\sum_{i=1}^k\left([P_i]-[N_i]\right) \right\} \! .
\end{equation}
Here, $\mathbb{M}(T)$ denotes the mass of the current~$T$
and~$\partial T$ its boundary. 

Following~\cite{BrezisCoronLieb}, Baldo et al.~recently 
provided~\cite{BaLe} a connection between energy-minimizing maps 
with values in a Cartesian product of spheres 
($\mathbb{S}^{d-1}\times \mathbb{S}^{d-1} 
\times \ldots \times \mathbb{S}^{d-1}, k$-times),
having prescribed topological
singularities at a finite number of points,
and a branched transportation problem i.e.~a
\textit{Gilbert-Steiner} problem with the singular points as sources.
The analysis in~\cite{BaLe} is based on
a reformulation of the branched transportation problem
as a Plateau's problem for $1$-dimensional integral 
currents with coefficients in the normed group~$(\Z^k, \| \cdot \|_{p})$, 
where~$1 \leq p \leq \infty$ and~$\| \cdot \|_{p}$ is the $\ell_p$-norm
on~$\Z^k$, as introduced in~\cite{MaMa, MaMa2}.
More precisely, given points~$P_{1},\ldots,P_{k}, P_{k+1}$ in~$\R^{d}$
with~$k\geq 1$, Baldo et al.~considered
\begin{equation} \label{M} 
 M_p := \inf \left\{ \mathbb{M}_p(T):\,\partial T = e_{1}\delta_{P_1}+e_{2}\delta_{P_2}+\ldots+e_{k}\delta_{P_{k}}-(e_{1}+e_{2}+\ldots+e_{k})\delta_{P_{k+1}} \right\} \! ,
\end{equation}
where~$T$ is a 1-dimensional current with coefficients in the normed group
$(\Z^k, \| \cdot \|_{p})$ and~$\mathbb{M}_p(T)$ is the $p$-mass of~$T$
(we refer the reader to~\cite{MaMa, MaMa2} and Section~\ref{section2}
below for more details).
Now, let~$O_{i}$ be the set of maps
$u\in W^{1,d-1}_{\rm loc}(\R^{d}; \mathbb{S}^{d-1})$
that are constant in a neighbourhood of infinity
and whose Jacobian determinant is given by
$J u = \frac{\alpha_{d-1}}{d}( \delta_{P_i}-\delta_{P_{k+1}})$
in the sense of distributions (see for instance \cite{JeSo02, ABO1}).
In \cite{BaLe}, the authors considered a class of functionals~$E_p$,
defined on $O_1\times \ldots \times O_k$, which depend on~$p$ and 
are comparable to the~$L^{d-1}$-seminorm of the gradient
(more precisely, the energy functional~$E_p$
must satisfy the requirements of \cite[Definition~13]{BaLe}).
Assuming a minimizer of~\eqref{M}
admits a calibration (see Definition~\ref{Calibration}),
they showed that 
\begin{equation}\label{harmonic}
\inf_{O_1\times \ldots \times O_k} E_p
= (d-1)^{\frac{d-1}{2}}\alpha_{d-1} M_p.
\end{equation}
Motivated by \cite{BrezisCoronLieb, BaLe},
in this work we investigate the connections
between energy-minimizing maps with values
in the $k$-dimensional \textit{flat torus} 
($\mathbb{T}^k := \mathbb{S}^1 \times \ldots \times \mathbb{S}^1$, $k$-times)
and Plateau's problems for currents in codimension $1$,
with coefficients in~$(\Z^k, \, \|\cdot\|_p)$
or~$(\R^k, \, \|\cdot\|_p)$.
It is worth mentioning that the study of maps with values in the flat torus arises naturally in several physical contexts. 
For instance, $\mathbb{T}^2$-valued maps are considered
in two-component Ginzburg-Landau models,
in which the order parameter is a complex vector-valued map
accounting for ferromagnetic or antiferromagnetic effects 
in high-temperature superconductors (see e.g.~\cite{Stan} 
and the references therein).

Another motivation for our work is the study of
Plateau's problem for vectorial currents, in codimension one.
More precisely, we would like to compare the minimal mass between Plateau's problem for \textit{normal currents} and \textit{integral currents},
with coefficients in the normed Abelian groups $(\R^k, \| \, \|_{p})$, $(\Z^k, \| \, \|_{p})$ respectively. 
It is well-known~\cite[5.10]{Federer2} that in case $k=1$, i.e.~for classical currents, given a $(d-2)$-dimensional integral flat boundary~$S$ in $\R^d$ with coefficients in $\mathbb{Z}$, we have the following equality:
\begin{equation}\label{equalifyFederer} 
\mathbb{P}_{\Z}(S)=\mathbb{P}_{\R}(S),
\end{equation}
where $\mathbb{P}_{\Z}(S)$ is the 
minimal mass among all the integral currents whose boundary is $S$
and~$\mathbb{P}_{\R}(S)$ is the 
minimal mass among all the normal currents whose boundary is $S$.

Then, it is natural to ask whether the equality~\eqref{equalifyFederer}
extends to vectorial currents, with~$k>1$. More precisely,
given a $(d-2)$-dimensional integral boundary~$S$ in $\R^d$
with coefficients in $\mathbb{Z}^k$, let
\begin{equation} \label{Plateau_int} \tag{P$_{\Z^k}$}
    \begin{split}
	\mathbb{P}_{\Z^k,p}(S) := \inf\big\{\mathbb{M}_p(T) 
	\colon &T \textrm{ is an } \textit{integral } (d-1)\textrm{-current} \\
	&\textrm{with coefficients in }
	(\Z^k, \, \norm{\cdot}_p), \ \partial T = S \big\}
    \end{split}
\end{equation}
and let
\begin{equation} \label{Plateau_normal} \tag{P$_{\R^k}$}
	\begin{split}
	\mathbb{P}_{\R^k,p}(S) := \inf\big\{\mathbb{M}_p(T) 
	\colon &T \textrm{ is a } \textit{normal } (d-1)\textrm{-current} \\
	&\textrm{with coefficients in }
	(\R^k, \, \norm{\cdot}_p), \ \partial T = S \big\}.
	\end{split}
\end{equation}
Here, $\mathbb{M}_p$ denotes the mass of a current,
taken with respect to the~$\ell^p$ norm~$\norm{\cdot}_p$.
The definition immediately implies~$\mathbb{P}_{\Z^k,p}(S)
\geq\mathbb{P}_{\R^k,p}(S)$. However, the equality does \emph{not} 
hold true, in general: in~\cite[Example 4.2]{Bonafini2018},
the authors gave an example (with~$d=2$, $k=4$, $p=\infty$) where
\begin{equation}
\mathbb{P}_{\Z^k,\infty}(S)> \mathbb{P}_{\R^k,\infty}(S).
\end{equation}
(see also Remark~\ref{rk:noncalibration} below for more details).


Instead, one could try to derive a bound on~$\mathbb{P}_{\Z^k,p}(S)$ in terms of~$\mathbb{P}_{\R^k,p}(S)$. We observe that we always have the following inequality:
\begin{equation}\label{inequalityPlateau1}
\mathbb{P}_{\Z^k,p}(S) \leq k \, \mathbb{P}_{\R^k,p}(S).
\end{equation}
To see this,
let~$(S_1, \, \ldots, \, S_k)$ be the components of~$S$.
For any~$i$, let $T_i$ be an integral current of minimal mass among all integral currents bounded by~$S_i$.
Then, on the one hand, $(T_1, \, \ldots, \, T_k)$
is an admissible competitor for problem~$\mathbb{P}_{\Z^k,p}(S)$, so
$\mathbb{P}_{\Z^k,p}(S) \leq \sum_{i=1}^k \mathbb{M}(T_i)$.
On the other hand, Federer's result~\cite[5.10]{Federer2} implies that~$T_i$
also minimizes the mass among the normal currents
bounded by~$S_i$. Therefore, we have 
$\mathbb{M}(T_i) = \mathbb{P}_{\R}(S_i) \leq \mathbb{P}_{\R^k,p}(S)$
for any~$i$ and any~$p$, and~\eqref{inequalityPlateau1} follows. 
In Theorem~\ref{E} below, we provide a different bound, in terms of a constant that grows \emph{sublinearly} as a function of~$k$
(when~$p$ is finite). More precisely, we prove that
\begin{equation}\label{inequalityPlateau2}
\mathbb{P}_{\Z^k,p}(S)\leq (2k^{1-\frac{1}{p}}-1)
\, \mathbb{P}_{\R^k,p}(S).
\end{equation}
The constant factor in front of the right-hand side
is independent on the dimension of the ambient space~$\R^d$:
it depends only on the number of components~$k$ and on~$p$.
We observe that the inequality \eqref{inequalityPlateau2}
is stronger than \eqref{inequalityPlateau1} when~$p < \infty$
and~$k$ is large enough --- for instance, when $1 \leq p \leq 2$ and $k>1$
(because then~$2 k^{1 - 1/p} - 1 \leq 2 k^{1/2} - 1 < k$).
Moreover, the inequality~\eqref{inequalityPlateau2} 
is sharp when~$k = 1$ or~$p = 1$ (for in this case, 
it reduces to~$\mathbb{P}_{\Z^k,p}(S) = \mathbb{P}_{\R^k,p}(S)$),
but it is not sharp when~$k>1$ and~$p=\infty$
(for in this case, the estimate~\eqref{inequalityPlateau1} 
is stronger than~\eqref{inequalityPlateau2}).
We do not know whether~\eqref{inequalityPlateau1}
is sharp for~$k>1$, $1 < p < \infty$.
In fact, to the best of our knowledge, we are not aware
of any inequality that provides an estimate for~$\mathbb{P}_{\Z^k,p}(S)$
in terms of~$\mathbb{P}_{\R^k,p}(S)$, other 
than~\eqref{inequalityPlateau1} and~\eqref{inequalityPlateau2}.

\paragraph*{Main results.}
Throughout the paper, we will consider currents 
in~$\R^d$ with coefficients in 
the normed space~$(\R^k, \, \norm{\cdot}_p)$,
where~$p\in [1, \, \infty]$, as defined by 
Marchese and Massaccesi~\cite{MaMa}.
(For convenience of the reader, we recall the 
definition in Section~\ref{section2} below).
We consider a~$(d-2)$-dimensional current~$S$,
with coefficients in~$\mathbb{Z}^k$. We will always assume that $S$ is an
$\mathbb{Z}^k$-integral flat boundary ---
that is, there exists a $(d-1)$-dimensional, rectifiable current~$T$,
with multiplicity in~$\mathbb{Z}^k$, such that~$\partial T = S$ ---
and that~$S$ has compact support. Moreover, we will often
make a (rather mild) assumption on the support of $S$. More precisely, if~$(S_1, \, \ldots, \, S_k)$ are the components 
of~$S$, we require that 
\begin{equation} \label{hp:H}
 \textrm{for each } i=1, \, \ldots, \, k, 
 \quad \R^d\setminus\spt S_i \ \textrm{ is connected and }
 \mathcal{H}^{d-1}(\spt S_i) = 0,
\end{equation}
where~$\spt S_i$ denotes the support of~$S_i$
and~$\mathcal{H}^{d-1}$ the Hausdorff measure of dimension~$d-1$.
However, we do not need to assume the condition~\eqref{hp:H} in Theorem \ref{E}.
Let~$(S_1, \, \ldots, \, S_k)$ be the components of~$S$.
For each index~$i=1, \, \ldots, \, k$, we consider the space~$G_i$, defined as the subset of $W^{1,1}_{\rm loc}(\R^{d}; \mathbb{S}^{1})$ consisting of the functions $u_{i}$ that are constant outside an open ball
of radius $r_i = r_{i}(u_i)$ 
and are such that $\star J(u_i)=\pi S_{i}$.
(Here, $J(u_i)$ denotes the distributional Jacobian of~$u_i$
and~$\star$ is the Hodge star operator; see Section~\ref{section2}
for details.)
We define $\mathcal{Q}_{S}=G_{1}\times G_{2} \times \ldots \times G_{k}$. 
As it turns out, if~$S$ has finite mass and
the assumption~\eqref{hp:H} is satisfied, then
the set $\mathcal{Q}_{S}$ is nonempty --- see Remark~\ref{nonempty}.

Let~$p\in [1, \, \infty]$ be fixed.
In this paper, we shall consider two functionals.
The first one is the $W^{1,1}$-harmonic energy, defined by
\begin{equation}
H_{p}(\u):=\int_{\R^d}\|(|\nabla u_1|,\ldots,|\nabla u_k|)\|_p \, \d x
\end{equation}
for any~$\u=(u_{1},\ldots,u_{k})\in \mathcal{Q}_{S}$.
Correspondingly, we set
\begin{equation} \label{H} \tag{H}
\mathbb{H}_{p}(S):=\inf \left\{H_{p}(\u)\colon \u
\in \mathcal{Q}_{S} \right\} \! .
\end{equation}
The second functional we consider is given by
\begin{equation}\label{FH21}
E_{p}(\u):=\int_{\R^{d}} |\nabla \u (x)|_{\nucl,p}\,\d x
\end{equation}
for any~$\u\in \mathcal{Q}_{S}$, where $| \cdot |_{\nucl,p}$ 
is the so-called $p$-nuclear norm. This is a suitable norm on the space of 
linear operators~$\R^d\to\C^k$, defined in \eqref{normonthegradient}.
We set
\begin{equation}\label{N}\tag{E}
\mathbb{E}_{p}(S):=\inf \left\{ E_{p}(\u)\colon \u
\in \mathcal{Q}_{S} \right\} \! .
\end{equation}
The functional~$H_p$ has the advantage of being rather simple to compute, 
because its integrand is defined component-wise.
This will make it a handy tool in our analysis of the lifting problem, 
in the second part of the paper (see Section~\ref{section4} below).
On the other hand, the functional~$E_p$ turns to be quite a natural object
to consider, because the nuclear norm of the gradient~$\abs{\nabla\u}_{\nucl,p}$
equals the mass of a suitable vector field, depending on~$\u$ 
(see Lemma~\ref{equivalentmassnormandgradient} below). In particular,
$E_p$ ``is suitably related to~$\mathbb{M}$ and~$H_p$'', in the sense
of~\cite[Definition~13]{BaLe}.
In the first part of the paper, we investigate the relationship
between Problems~\eqref{H}, \eqref{N} and~$\mathbb{P}_{\R^k,p}(S)$.
\begin{maintheorem}\label{A} 
	Let $S$ be a $\Z^k$-integral flat boundary,
	of dimension~$(d-2)$. Assume that~$S$ has
	finite mass and compact support
	and the condition~\eqref{hp:H} is satisfied.
    Then, for any $p\in [1, \, \infty]$, one has
	\begin{equation}\label{equalityenergymimizingmapsandplateus} 
	\mathbb{H}_{p}(S)\leq \mathbb{E}_{p}(S) = 2\pi \, \mathbb{P}_{\R^k,p}(S) \leq \mathbb{H}_{1}(S)\leq k^{1-\frac{1}{p}}\mathbb{H}_{p}(S).
	\end{equation}
\end{maintheorem}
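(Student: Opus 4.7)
The plan is to prove the chain~\eqref{equalityenergymimizingmapsandplateus} as four separate inequalities. Two of them, $\mathbb{H}_{p}(S)\leq \mathbb{E}_{p}(S)$ and $\mathbb{H}_{1}(S)\leq k^{1-1/p}\mathbb{H}_{p}(S)$, rest purely on pointwise norm comparisons between the integrands, which integrate and pass to the infimum immediately. The first follows from the inequality $\|(|L_{1}|,\ldots,|L_{k}|)\|_{p}\leq |L|_{\nucl,p}$ for every linear $L\colon \R^{d}\to \C^{k}$ with rows~$L_{i}$, a direct consequence of the definition of the $p$-nuclear norm as an infimum over tensor-product decompositions. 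The second is the H\"older inequality $\|x\|_{1}\leq k^{1-1/p}\|x\|_{p}$ on~$\R^{k}$, applied pointwise to $(|\nabla u_{1}|,\ldots,|\nabla u_{k}|)$.

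For the third inequality $2\pi\,\mathbb{P}_{\R^{k},p}(S)\leq \mathbb{H}_{1}(S)$, I would exploit the fact that both the functional $H_{1}$ and the constraint $\u\in\mathcal{Q}_{S}$ decouple componentwise, so that
\[
\mathbb{H}_{1}(S)=\sum_{i=1}^{k}\inf_{u_{i}\in G_{i}}\int_{\R^{d}}|\nabla u_{i}|\,\d x.
\]
Each scalar infimum is bounded from below by $2\pi\,\mathbb{P}_{\Z}(S_{i})$ by the $\mathbb{S}^{1}$-version of the Brezis--Coron--Lieb theorem (see e.g.~\cite{ABO1,JeSo02}), and Federer's equality~\eqref{equalifyFederer} identifies this with $2\pi\,\mathbb{P}_{\R}(S_{i})$. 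Since the $\ell^{1}$-mass of a vectorial current satisfies $\mathbb{M}_{1}(T)=\sum_{i}\mathbb{M}(T_{i})$ and the boundary condition $\partial T=S$ splits, the total sum equals $2\pi\,\mathbb{P}_{\R^{k},1}(S)$; finally, $\|\cdot\|_{p}\leq\|\cdot\|_{1}$ on~$\R^{k}$ yields $\mathbb{P}_{\R^{k},p}(S)\leq\mathbb{P}_{\R^{k},1}(S)$, closing the argument.

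The core of the theorem is the equality $\mathbb{E}_{p}(S)=2\pi\,\mathbb{P}_{\R^{k},p}(S)$. For the lower bound, the strategy is to associate to each $\u\in\mathcal{Q}_{S}$ a normal $(d-1)$-current $T_{\u}$ with coefficients in $(\R^{k},\|\cdot\|_{p})$, whose $i$-th component is the current represented by the vector field $V_{i}=\frac{1}{2\pi}(u_{i}^{1}\nabla u_{i}^{2}-u_{i}^{2}\nabla u_{i}^{1})$. The Jacobian constraint $\star J(u_{i})=\pi S_{i}$, after taking the exterior derivative of the associated $1$-form and its Hodge dual, translates into $\partial T_{\u}=S$, while Lemma~\ref{equivalentmassnormandgradient} provides the energy identity $E_{p}(\u)=2\pi\,\mathbb{M}_{p}(T_{\u})$. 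Taking the infimum yields $\mathbb{E}_{p}(S)\geq 2\pi\,\mathbb{P}_{\R^{k},p}(S)$.

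The main obstacle lies in the reverse inequality $\mathbb{E}_{p}(S)\leq 2\pi\,\mathbb{P}_{\R^{k},p}(S)$: given a normal current~$T$ with $\partial T=S$ of nearly minimal $p$-mass, one has to build an admissible $\u\in \mathcal{Q}_{S}$ with $E_{p}(\u)\leq 2\pi\,\mathbb{M}_{p}(T)+\varepsilon$. I would proceed by approximation: first approximate~$T$ by polyhedral currents with rational vectorial multiplicities on a fine grid; then implement, componentwise, a Brezis--Coron--Lieb type dipole/sheet construction producing $\mathbb{S}^{1}$-valued maps whose phase jumps by $2\pi$ times the corresponding multiplicity across each oriented simplex in the support of~$T_{i}$; and finally verify, again via Lemma~\ref{equivalentmassnormandgradient}, that the joint nuclear energy of the resulting $\u$ matches $2\pi\,\mathbb{M}_{p}(T)$ up to the approximation error. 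The delicate points will be (a)~enforcing the \emph{exact} Jacobian identity $\star J(u_{i})=\pi S_{i}$, so that $\u$ truly lies in $\mathcal{Q}_{S}$, and (b)~controlling the interaction between different components on regions where the supports of the~$T_{i}$ overlap, so that the full geometric $p$-mass is captured by the nuclear norm of the gradient.
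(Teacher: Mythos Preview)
Your handling of the two pointwise inequalities and of the lower bound $\mathbb{E}_{p}(S)\geq 2\pi\,\mathbb{P}_{\R^{k},p}(S)$ matches the paper's. For $2\pi\,\mathbb{P}_{\R^{k},p}(S)\leq \mathbb{H}_{1}(S)$ you take a different route, invoking the scalar Brezis--Coron--Lieb result and Federer's equality $\mathbb{P}_{\Z}=\mathbb{P}_{\R}$; the paper instead obtains $\mathbb{E}_{p}(S)\leq \mathbb{H}_{1}(S)$ directly from the pointwise bound $|\star\mathbf{j}\u|_{\mass,p}\leq \sum_i|\nabla u_i|$ (Lemma~\ref{comparenorms}), so that the inequality falls out of the central equality $\mathbb{E}_{p}=2\pi\,\mathbb{P}_{\R^{k},p}$ without appealing to Federer --- a point the paper stresses, since one of its aims is to re-derive Federer's codimension-one result rather than assume it.

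The real gap is in your upper bound $\mathbb{E}_{p}(S)\leq 2\pi\,\mathbb{P}_{\R^{k},p}(S)$. A dipole/sheet construction over a polyhedral approximation of a \emph{normal} current $T$ with non-integer multiplicities cannot produce maps in $\mathcal{Q}_S$: if the phase $\theta_i$ jumps by $2\pi m$ across a face with $m\notin\Z$, then $u_i=e^{i\theta_i}$ itself jumps there and fails to lie in $W^{1,1}$; if instead you smear the phase variation over a thin layer to keep $u_i$ Sobolev, then $J(u_i)=0$ on that layer and the constraint $\star J(u_i)=\pi S_i$ is destroyed. Dipole constructions are intrinsically tied to integer multiplicities, hence to $\mathbb{P}_{\Z^k,p}$, not $\mathbb{P}_{\R^k,p}$. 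Your ``delicate point (a)'' is in fact fatal for this scheme.

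The paper's construction (Lemma~\ref{mainlemma}) is different and bypasses the obstruction. One mollifies $T$ and corrects the boundary to obtain $\bar T$ with $\partial\bar T=S$, $\mathbb{M}_p(\bar T)\leq\mathbb{M}_p(T)+\eta$, and each component $\bar T_i$ carried by a vector field smooth off $\spt S_i$. Then $\star\bar T_i$ is a smooth closed $1$-form on $\R^d\setminus\spt S_i$, and its periods around loops are integers precisely because $S_i$ is an \emph{integral} boundary (Lemma~\ref{lemma:integer}); hence $u_i:=\exp\bigl(2\pi i\int_{\gamma_x}\star\bar T_i\bigr)$ is a well-defined $\mathbb{S}^1$-valued map with $\frac{1}{2\pi}\star j(u_i)=\bar T_i$ exactly, and $E_p(\u)=2\pi\,\mathbb{M}_p(\bar T)$ by Lemma~\ref{equivalentmassnormandgradient}. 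The key insight is that the real coefficients of $T$ are absorbed into the smooth density of $\bar T$, while integrality enters only through the boundary $S$, which is what makes the exponential single-valued.
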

\begin{remark}
	In the planar case, i.e.~when~$d=2$, the first part of present paper is in the same spirit as~\cite{BaLe}. In particular, our functional~$\mathbb{E}$ satisfies the requirements of \cite[Definition 13]{BaLe}. Furthermore, we are able to prove the equivalence between the problems~\eqref{N} and~$\mathbb{P}_{\R^k,p}(S)$  thanks to the special structure of the target manifold. It turns out that the main result of~\cite{BaLe} is not always true in case without assuming the existence of calibration: in Remark \ref{rk:noncalibration}, by combining Theorem~\ref{A} with 
	Example 4.2 from~\cite{Bonafini2018}, we provide a counterexample for the equality~\eqref{harmonic} in case $p=\infty$, $k=4$, and $d=2$.
	Instead, in Theorem~\ref{D} below, we give a necessary and sufficient 
	condition for the equality 
	$\mathbb{H}_p(S) = 2\pi \,\mathbb{P}_{\Z^k,p}(S)$
	to hold, in terms of the lifting problem for torus-valued maps.
\end{remark}

In the second part of the paper,
we investigate the relationship between Plateau's problem
and the \textit{lifting} problems for torus-valued maps of bounded variation. 
 Given~$\u\in\mathcal{Q}_S$, we will
say that~$\ttheta=(\theta_{1},\ldots,\theta_{k})
\in \BV_{\mathrm{loc}}(\R^d, \R^k)$ is a BV-lifting
of~$\u$ if~$u_j = e^{i\theta_j}$ a.e.~in~$\R^d$,
for any index~$j=1, \, \ldots, \, k$.
Any map~$\u\in\mathcal{Q}_S$ admits a BV-lifting
\cite{GiaquintaModicaSoucek, Ignat, Ignat-Lifting}.
Now, given~$u\in\mathcal{Q}_S$, we consider the following problem:
\begin{equation} \label{min_lifting} \tag{L}
\mathbb{L}_p(S, \, \u) := \inf\left\{ \int_{\mathrm{S}(\ttheta)} 
\|\ttheta^+(x) - \ttheta^-(x)\|_p \, \d \mathcal{H}^{d-1}(x) \colon
\ttheta \textrm{ is a BV-lifting of } \u \right\} \! ,
\end{equation}
where~$\mathrm{S}(\ttheta)$ is the $(d-1)$-rectifiable jump set of~$\ttheta$, and $\ttheta^+(x)$, $\ttheta^-(x)$ stand for the right and left approximate limits of $\ttheta$ at almost every point~$x\in \mathrm{S}(\ttheta)$ 
(we refer the reader to Section \ref{section4} for more details).
As we will prove later on (in Lemma~\ref{lemma:indip_S}),
the quantity~$\mathbb{L}_p(S, \, \u)$ 
actually depends on~$S$, but not on the choice of~$\u\in\mathcal{Q}_S$,
so we can write~$\mathbb{L}_p(S)$ instead of~$\mathbb{L}_p(S, \, \u)$.
The relation between the problems~\eqref{min_lifting}
and~\eqref{Plateau_int} is encoded in the following theorem:
\begin{maintheorem}\label{C}
	Let $S$ be a $\Z^k$-integral flat boundary,
	of dimension~$(d-2)$. Assume that~$S$ has
	finite mass and compact support
	and the condition~\eqref{hp:H} is satisfied. Then, 
	for any~$p\in [1, \, \infty]$ 
	one has
	\begin{equation}
	\mathbb{L}_p(S)=2\pi \,\mathbb{P}_{\Z^k,p}(S) .
	\end{equation}
\end{maintheorem}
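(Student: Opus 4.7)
The key observation is that, for any $\u\in\mathcal{Q}_S$ and any $\BV$-lifting $\ttheta=(\theta_1,\ldots,\theta_k)$ of $\u$, the jumps $\theta_j^+-\theta_j^-$ lie in $2\pi\Z$ a.e.~on the jump set, because $u_j=e^{i\theta_j}\in W^{1,1}$ carries no jump part. Hence $\ttheta$ induces an integer rectifiable $(d-1)$-current with coefficients in $(\Z^k,\|\cdot\|_p)$,
\begin{equation*}
T_{\ttheta} := \frac{1}{2\pi}\,\llbracket \mathrm{S}(\ttheta),\,\nu_{\ttheta},\,\ttheta^+-\ttheta^-\rrbracket,
\end{equation*}
whose $p$-mass satisfies $2\pi\,\mathbb{M}_p(T_{\ttheta}) = \int_{\mathrm{S}(\ttheta)}\|\ttheta^+-\ttheta^-\|_p\,\d\mathcal{H}^{d-1}$. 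I would prove the two inequalities by analysing the correspondence $\ttheta\mapsto T_{\ttheta}$.

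For the direction $\mathbb{L}_p(S)\geq 2\pi\,\mathbb{P}_{\Z^k,p}(S)$, the main step is to verify that $\partial T_{\ttheta}=S$. This follows from the $\BV$ chain rule applied componentwise to $u_j=e^{i\theta_j}$: splitting $D\theta_j$ into its absolutely continuous and jump parts and computing the distributional Jacobian $J(u_j)$ accordingly, one identifies $\star J(u_j)$ with $\pi\,\partial (T_{\ttheta})_j$, so that the defining constraint $\star J(u_j)=\pi S_j$ of $\mathcal{Q}_S$ yields $\partial T_{\ttheta}=S$. Thus $T_{\ttheta}$ is an admissible competitor for $\mathbb{P}_{\Z^k,p}(S)$, and the inequality is obtained by taking the infimum over $\ttheta$ and $\u$.

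For the reverse inequality, fix any $\u\in\mathcal{Q}_S$ (nonempty by Remark~\ref{nonempty}) and any $\BV$-lifting $\ttheta_0$ of $\u$, whose existence is ensured by~\cite{GiaquintaModicaSoucek,Ignat,Ignat-Lifting}; by the previous step $\partial T_{\ttheta_0}=S$. Given any integral $(d-1)$-current $T$ with $\partial T=S$, the difference $C:=T-T_{\ttheta_0}$ is a codimension-one integer cycle. I would then produce a $\Z^k$-valued $\SBV$ function $\phi$ whose absolutely continuous derivative vanishes and whose jump current equals $C$, so that $\ttheta:=\ttheta_0+2\pi\phi$ is a new $\BV$-lifting of the same $\u$ with $T_{\ttheta}=T$ and $\int_{\mathrm{S}(\ttheta)}\|\ttheta^+-\ttheta^-\|_p\,\d\mathcal{H}^{d-1}=2\pi\,\mathbb{M}_p(T)$. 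Infimizing over $T$ yields $\mathbb{L}_p(S,\u)\leq 2\pi\,\mathbb{P}_{\Z^k,p}(S)$ and, combined with the lower bound, also the independence of $\mathbb{L}_p(S,\u)$ on $\u$ stated in Lemma~\ref{lemma:indip_S}.

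The hardest ingredient is the construction of $\phi$, i.e.~the representation of an arbitrary codimension-one integer cycle as the jump current of a $\Z^k$-valued $\SBV$ function with matching multiplicities. Componentwise, this amounts to showing that a $\Z$-valued integer $(d-1)$-cycle in $\R^d$ bounds an integer Caccioppoli partition whose labels jump by the cycle's multiplicity across each layer. I would first establish this for polyhedral cycles, where the partition can be built explicitly, and then extend to general rectifiable cycles via polyhedral approximation in the flat norm, combined with a compactness and lower semicontinuity argument in $\SBV$; the assumption~\eqref{hp:H} on the connectedness of $\R^d\setminus\spt S_j$ is used to fix the integer labels coherently at infinity and guarantee uniqueness of the Caccioppoli decomposition in the limit.
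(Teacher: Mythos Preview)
Your overall strategy coincides with the paper's: associate to every $\BV$-lifting~$\ttheta$ the integral current carried by its jump, show its boundary is~$S$ via~$\d(\d\theta_j)=0$ split into absolutely continuous and jump parts, and for the reverse inequality modify a given lifting by a $2\pi\Z^k$-valued function so as to realise a prescribed competitor~$T$ as its jump current. Step~1 of your plan is essentially the paper's Step~1.

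Where you diverge is in what you call ``the hardest ingredient''. You propose to build~$\phi$ by polyhedral approximation of the integer cycle~$C=T-T_{\ttheta_0}$, followed by $\SBV$-compactness and lower semicontinuity. This can be made to work, but it is considerably heavier than needed, and the invocation of~\eqref{hp:H} in this step is a misattribution. The paper handles this step in one line: since~$C_j$ is a $(d-1)$-dimensional integer-multiplicity cycle in~$\R^d$, it bounds a $d$-dimensional integral current~$R_j$ of locally finite mass (e.g.\ by the cone construction, or the isoperimetric inequality together with~$H_{d-1}(\R^d;\Z)=0$). A top-dimensional integral current is nothing but an integer-valued function in~$L^1_{\mathrm{loc}}$ times the volume form; in the paper's notation, $\star R_j\in L^1_{\mathrm{loc}}(\R^d;\Z)$, and then~$\phi_j:=-\star R_j$ is precisely the function you seek, with~$\d(\star R_j)=-\star(\partial R_j)=-\star C_j$. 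No approximation or compactness is needed, and the Caccioppoli-partition picture you describe is just the level-set decomposition of this integer function.

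Finally, the hypothesis~\eqref{hp:H} is not used in either direction of this argument; in the paper it enters only to guarantee~$\mathcal{Q}_S\neq\emptyset$ (via the construction in Lemma~\ref{mainlemma}), which you already cite through Remark~\ref{nonempty}.
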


Finally, combining Theorem~\ref{A} and Theorem~\ref{C}
with suitable bounds on the norm of BV-liftings
(see~\cite{Ignat}), we obtain a bound for~$\mathbb{P}_{\Z^k,p}(S)$ 
in terms of~$\mathbb{P}_{\R^k,p}(S)$.
\begin{maintheorem}\label{E}
Let $S$ be a $\Z^k$-integral flat boundary,
of dimension~$(d-2)$, of compact support.
Then, for all~$p\in [1, \, \infty]$, we have
\begin{equation} \label{Plateau_ineq}
	\mathbb{P}_{\Z^k,p}(S)\leq (2k^{1-\frac{1}{p}}-1)
	\, \mathbb{P}_{\R^k,p}(S).
\end{equation}
\end{maintheorem}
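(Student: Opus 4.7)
The plan is to combine Theorems~\ref{A} and~\ref{C} with a componentwise BV-lifting estimate of Ignat~\cite{Ignat}. From Theorem~\ref{C} we have $\mathbb{L}_p(S) = 2\pi\,\mathbb{P}_{\Z^k,p}(S)$, and from Theorem~\ref{A}, $\mathbb{E}_p(S) = 2\pi\,\mathbb{P}_{\R^k,p}(S)$ together with the chain
\[
\mathbb{E}_p(S) \leq \mathbb{H}_1(S) \leq k^{1-1/p}\,\mathbb{E}_p(S).
\]
Dividing~\eqref{Plateau_ineq} by $2\pi$, it therefore suffices to prove the key estimate
\[
\mathbb{L}_p(S) + \mathbb{E}_p(S) \leq 2\,\mathbb{H}_1(S),
\]
because then the chain above yields $\mathbb{L}_p(S) \leq 2\,\mathbb{H}_1(S) - \mathbb{E}_p(S) \leq (2k^{1-1/p}-1)\,\mathbb{E}_p(S)$, which is equivalent to~\eqref{Plateau_ineq}.

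To obtain this key estimate, I would fix an arbitrary $\u = (u_1,\ldots,u_k) \in \mathcal{Q}_S$ (its specific choice is immaterial, by Lemma~\ref{lemma:indip_S}) and apply Ignat's BV-lifting theorem~\cite{Ignat} componentwise. For each $u_j\in W^{1,1}_{\mathrm{loc}}(\R^d;\mathbb{S}^1)$, this produces a BV-lifting $\theta_j$ with $|D\theta_j|(\R^d) \leq 2\int|\nabla u_j|\,\d x$; subtracting the absolutely continuous part $|D^{ac}\theta_j|(\R^d) = \int|\nabla u_j|\,\d x$ (which comes from $u_j = e^{i\theta_j}$, so $|\nabla u_j| = |\nabla \theta_j|$ almost everywhere) controls the jump part by $\int|\nabla u_j|\,\d x$. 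Assembling $\ttheta = (\theta_1,\ldots,\theta_k)$ as a BV-lifting of $\u$ and using the pointwise norm inequality $\|v\|_p \leq \|v\|_1$ on $\R^k$,
\[
\int_{\mathrm{S}(\ttheta)} \|\ttheta^+ - \ttheta^-\|_p \,\d\mathcal{H}^{d-1} \leq \sum_{j=1}^{k} \int_{\mathrm{S}(\theta_j)} |\theta_j^+ - \theta_j^-| \,\d\mathcal{H}^{d-1} \leq \sum_{j=1}^{k} \int|\nabla u_j|\,\d x = H_1(\u).
\]
Taking the infimum over $\u$ gives $\mathbb{L}_p(S) \leq \mathbb{H}_1(S)$; adding this to the inequality $\mathbb{E}_p(S) \leq \mathbb{H}_1(S)$ from Theorem~\ref{A} produces the key estimate $\mathbb{L}_p(S) + \mathbb{E}_p(S) \leq 2\,\mathbb{H}_1(S)$.

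The main obstacle I expect is the precise application of Ignat's BV-lifting theorem in the present global, vector-target setting: the classical statement applies to bounded domains and scalar targets, whereas here each component $u_j:\R^d\to\mathbb{S}^1$ is merely constant outside a ball. The compact support of $S$ and the stabilization of each $u_j$ at infinity should however allow Ignat's construction to be localized and applied globally without essential loss in the constant, after which the pointwise inequality $\|\cdot\|_p \leq \|\cdot\|_1$ combines the components and the Theorem~\ref{A} chain produces the final constant $(2k^{1-1/p}-1)$ in~\eqref{Plateau_ineq}.
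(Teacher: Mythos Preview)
Your approach is essentially the paper's: both combine the componentwise Ignat lifting estimate with Theorems~\ref{A} and~\ref{C}. The paper packages the lifting bound as Theorem~\ref{B} (namely $|\ttheta|_{\SBV,p}\leq 2k^{1-1/p}H_p(\u)$, itself proved via $|\ttheta|_{\SBV,1}\leq 2H_1(\u)$ and H\"older), then subtracts the absolutely continuous part $H_p(\u)$ to isolate the jump and get $\mathbb{L}_p(S)\leq (2k^{1-1/p}-1)\mathbb{H}_p(S)\leq (2k^{1-1/p}-1)\mathbb{E}_p(S)$. You instead subtract the absolutely continuous part componentwise first to obtain $\mathbb{L}_p(S)\leq \mathbb{H}_1(S)$ and then route through your ``key estimate''. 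These are minor algebraic rearrangements of the same inequalities.

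There is, however, one genuine gap. Theorem~\ref{E} is stated for an arbitrary $\Z^k$-integral flat boundary of compact support, \emph{without} assuming finite mass or condition~\eqref{hp:H}; but Theorems~\ref{A} and~\ref{C}, which you invoke, both carry those extra hypotheses (in particular $\mathcal{Q}_S$ may be empty without them). The paper handles this in a separate step by polyhedral approximation: using~\cite[\S~4.2.21]{FeBook} and the isoperimetric inequality one finds integral polyhedral cycles $S^n$ with $\mathbb{P}_{\R^k,p}(S^n)\to\mathbb{P}_{\R^k,p}(S)$ and $\mathbb{P}_{\Z^k,p}(S^n)\to\mathbb{P}_{\Z^k,p}(S)$, so that~\eqref{Plateau_ineq} for~$S^n$ passes to the limit. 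Your proposal does not mention this at all. The obstacle you do flag---applying Ignat's theorem on~$\R^d$ rather than on a bounded domain---is not the real issue: each $u_j$ is constant outside some ball, so one works on a large ball without any loss in the constant.

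As a side remark, your intermediate bound $\mathbb{L}_p(S)\leq \mathbb{H}_1(S)$ is actually sharper than you use it for: combined directly with $\mathbb{H}_1(S)\leq k^{1-1/p}\mathbb{E}_p(S)$ from Theorem~\ref{A}, it gives $\mathbb{P}_{\Z^k,p}(S)\leq k^{1-1/p}\,\mathbb{P}_{\R^k,p}(S)$, which is strictly stronger than~\eqref{Plateau_ineq} whenever $k>1$ and $p>1$. Your detour through $\mathbb{L}_p+\mathbb{E}_p\leq 2\mathbb{H}_1$ throws this improvement away.
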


Note that Theorem~\ref{E} does \emph{not} rely on the assumption
that~$S$ has finite mass, nor that it satisfies~\eqref{hp:H}.
We have removed these assumptions by an approximation procedure.

\begin{remark}
	In case $k=1$, when we are working with~$\mathbb{S}^{1}$-valued maps,
	the quantities defined in~\eqref{Plateau_int},
	\eqref{Plateau_normal}, \eqref{min_lifting}, \eqref{H} and~\eqref{N}
	do not depend on~$p$. Moreover, from Theorem~\ref{A}, Theorem~\ref{C}
	and Theorem~\ref{E} we deduce (dropping the dependence of~$p$ 
	from the notation) that
	\begin{equation} \label{S1equality}
	 \mathbb{H}(S) 
	 = \mathbb{E}(S)
	 = \mathbb{L}(S)
	 = 2\pi \, \mathbb{P}_{\R}(S) 
	 = 2\pi \, \mathbb{P}_{\Z}(S) .
	\end{equation}
	The equality~$\mathbb{H}(S) = 2\pi \, \mathbb{P}_{\Z}(S)$
	was first conjectured by Brezis, Coron and
	Lieb~\cite[Equation~(8.23)]{BrezisCoronLieb}. 
	The same authors gave a proof of this equality
	in a particular case, namely, when~$d=3$ and 
	the boundary~$S$ is a planar curve.
	Later on, Almgren, Browder and Lieb~\cite{abh} proved 
	it in greater generality, using the coarea formula
	(see also the recent book by Brezis and Mironescu~\cite{Brezis2021},
	in particular Chapters~2 and~3). In our paper, 
	we provide an alternative proof, which is based
	on the existence of BV-liftings instead of the coarea formula.
\end{remark}
\begin{remark}
  We stress that our arguments do \emph{not}
 depend on Federer's result~\cite{Federer2} 
 that~$\mathbb{P}_{\Z}(S) = \mathbb{P}_{\R}(S)$ in case~$k=1$.
 In fact, Theorem~\ref{E} provides an alternative proof for this fact.
\end{remark}

The paper is organized as follows. In Section~\ref{section2}, we recall 
basic notions in Geometric Measure Theory which will be used in the paper. 
Section~\ref{section3} is devoted to the proof Theorem~\ref{A}.
In Section~\ref{section4} we address the lifting problem for
torus-valued maps in the space~$\BV$, 
and we prove Theorems~\ref{C} and~\ref{E}.
In addition, still in Section~\ref{section4},
we provide a necessary and sufficient condition for the
problems~\eqref{H} and~\eqref{Plateau_int} to be equivalent.
An appendix, containing the proof of some technical results, 
completes the paper.
\paragraph*{Acknowledgements.}
The authors are grateful to Andrea Marchese for fruitful discussions
and to the referees for their careful reading of the manuscript
and their helpful suggestions.
The authors acknowledge support from the University of Verona,
under the project RIBA~2019, No.~RBVR199YFL
``Geometric Evolution of Multi Agent Systems''.
The authors were partially supported by GNAMPA-INdAM.
The second author was partially supported by the German Research Foundation (DFG) by the project no. 392124319, and also acknowledges the support of STRUCTURES, Heidelberg University.
\section{Notations and preliminaries}
\label{section2}

\numberwithin{equation}{section}
\numberwithin{theorem}{section}

\subsection{Preliminary notions}
\subsubsection{Currents with coefficients in normed groups and Plateau's problems}
\label{section2.1.1}

Throughout this entire paper, we work in the ambient space~$\R^{d}$,
with~$d\geq 2$.
We denote by~$\mathbb{B}^d_r(x)$ the open ball in $\R^d$ with center at~$x\in \R^d$ and radius~$r>0$, and by~$\bar{\mathbb{B}}^d_r(x)$ its closure.
For each integer~$k>0$, $\mathbb{S}^{k-1}=\partial \mathbb{B}^k_1(0)$ is the unit sphere in $\R^k$ and $\alpha_{k}=\mathcal{H}^{k-1}(\mathbb{S}^{k-1})$ is its surface area, where~$\mathcal{H}^k$ is the $k$-dimensional Hausdorff measure.
We denote by~$\mathcal{L}^{k}$ the Lebesgue measure in~$\R^k$, and we use the notation $\abs{E}$ to denote the Lebesgue measure of the measurable set $E$ for convenience.

We follow~\cite{Ma} for basic notions and terminologies regarding currents with coefficients in a normed group.
Let~$k\geq 1$ be an integer and let~$1 \leq p \leq \infty$.
The normed groups that we consider in this paper are $(\R^k, \| \cdot \|_{p})$ (corresponding to normal currents) and $(\Z^k, \| \cdot \|_{p})$ (corresponding to integral currents),
where $\| \cdot \|_{p}$ is the $\ell_p$-norm in $\R^k$,
defined for $z = (z_1, \ldots, \, z_k)\in\R^k$ by
\begin{equation}\label{norm}
\|z\|_{p}
:=\begin{cases} 
\left(\sum_{j=1}^{k}|z_{j}|^{p}\right)^{\frac{1}{p}} & \mbox{in case } p \in [1, \infty), \\ 
\max \lbrace \abs{z_{1}},\ldots,\abs{z_{k}} \rbrace & \mbox{in case } p=\infty. 
\end{cases}
\end{equation}
For the convenience of the reader, we recall here the main definitions.

Given an integer~$m$ with~$1\leq m \leq d$, we denote by~$\Lambda_{m}(\R^{d})$ the space of $m$-vectors and  by~$\Lambda^{m}(\R^{d})$ the space of $m$-covectors in $\R^{d}$.
Let $\lbrace e_{1},e_{2},\ldots,e_{d} \rbrace$  be an orthonormal basis of $\R^{d}$, and let $\lbrace \d x_1, \d x_2,\ldots,\d x_d \rbrace$ be its dual basis.
Given a multi-index $I=\lbrace i_1,\ldots,i_m \rbrace$ such that $i_1<i_2<\ldots<i_m$, we use the notation $e_{I} := e_{i_1}\wedge \ldots \wedge e_{i_m}$ and $\d x_{I} := \d x_{i_1}\wedge \ldots \wedge \d x_{i_m}$, where $\wedge$ is the wedge product. Then, we introduce the following operators:
\begin{equation}\label{identitication1}
\begin{aligned}
\# \colon \Lambda^{m}(\R^{d}) \to \Lambda_{m}(\R^{d}),
\qquad \flat\colon \Lambda_{m}(\R^{d}) \to \Lambda^{m}(\R^{d}).
\end{aligned}
\end{equation}
The operator~$\#$ is the identification between $m$-covectors and $m$-vectors induced by Euclidean metric i.e. for any $m$-covector~$w=\Sigma_{I}a_{I}\d x_{I}$, the corresponding vector $w^{\#}$ is defined by
\begin{equation}
w^{\#} := \Sigma_{I}a_{I} \, e_{I},
\end{equation}
where the sum is taken over all multi-indices~$I=\lbrace i_1,\ldots,i_m \rbrace$ such that $i_1<i_2<\ldots<i_m$. The operator~$\flat$
is the inverse of~$\#$:
for any $m$-vector~$w=\Sigma_{I}a_{I}e_{I}$, the corresponding covector~$w^{\flat}$ is defined by
\begin{equation}
w^{\flat} := \Sigma_{I}a_{I} \, \d x_{I},
\end{equation}
where again the sum is taken over all multi-indices~$I=\lbrace i_1,\ldots,i_m \rbrace$ such that $i_1<i_2<\ldots<i_m$.

\begin{definition}\label{definitionofcovectornorm}
 An $(\R^{k})^{*}$-valued  $m$-covector on $\R^{d}$ is a bilinear map
 $$w\colon\Lambda_{m}(\R^{d})\times \R^{k}\longrightarrow \R\,.$$
 An $\R^{k}$-valued  $m$-vector on $\R^{d}$ is a bilinear map
 $$w\colon\Lambda^{m}(\R^{d})\times (\R^{k})^*\longrightarrow \R\,.$$
\end{definition}

We denote by $\Lambda_{\R^k}^m(\R^d)$ and $\Lambda_{m,\R^k}(\R^d)$ the spaces of $(\R^{k})^{*}$-valued $m$-covectors and $(\R^{k})$-valued $m$-vectors on $\R^{d}$, respectively. These spaces are dual each other. We observe that $(\R^k)^*$-valued covectors and $\R^k$-vectors can be identified
with $k$-tuples of classical covectors and vectors. More precisely, let $\lbrace e_{1},e_{2},\ldots,e_{k} \rbrace$  be an orthonormal basis of~$\R^{k}$, and let $\lbrace e^{*}_{1}, e^{*}_{2}, \ldots, e^{*}_{k} \rbrace$ be its dual. Then, each $(\R^{k})^{*}$-valued $m$-covector on $\R^{d}$ can be expressed as
$w=w_{1} \, e^{*}_{1}+\ldots+w_{k} \, e^{*}_{k}$,
where for any index~$i= 1, \ldots, k$, $w_{i}$ is a classical $m$-covector in $\R^{d}$, defined by $w_{i}:=\langle w ; \cdot , e_{i} \rangle$. Similarly, each $\R^{k}$-valued $m$-vector on $\R^{d}$ can be expressed as
$v=v_{1} \, e_{1}+\ldots+v_{k} \, e_{k}$ where  $v_{i}$ is a classical $m$-vector in $\R^{d}$ defined by $v_{i}:=\langle v ; \cdot , e^*_{i} \rangle$. The operators $\#$, $\flat$ can be extended 
to operators~$\Lambda^{m}_{\R^{k}}(\R^{d})\to\Lambda_{m, \R^{k}}(\R^{d})$ 
and $\Lambda_{m, \R^{k}}(\R^{d})\to\Lambda^{m}_{\R^{k}}(\R^{d})$ respectively,  by applying~$\#$, $\flat$ component-wise.

Given~$p\in [1, \, \infty]$, the space $\Lambda_{\R^k}^m(\R^d)$ is equipped with the $p$-\textit{comass} norm, defined by
\begin{equation} \label{comass}
\abs{w}_{\comass, p}:=\sup \left\lbrace \| \langle w ; \tau, \cdot \rangle\|^{*}_{p} 
\colon \vert \tau \vert \leq 1, \ \tau \mbox{ is simple $m$-vector}\right\rbrace\!
\end{equation}
for any $w\in \Lambda^{m}_{\R^{k}}(\R^{d})$.
Here, $|\cdot |$ is the Euclidean norm on $\Lambda^{m}(\R^{d})$, $\Lambda_{m}(\R^{d})$. We identify the dual space $((\R^{k})^{*}, \|\cdot\|^{*}_{p})$
with~$(\R^{k}, \| \cdot\|_{p^{*}})$ where $p^{*}\in [1, \, \infty]$ is the conjugate exponent of $p$, defined in such a way that $\frac{1}{p^{*}}+\frac{1}{p}=1$ (on the understanding that $\frac{1}{\infty}=0$, $\frac{1}{0} = \infty$). 
The space $\Lambda_{m, \R^k}(\R^d)$ is equipped with the pre-dual norm or $p$-\textit{mass} norm, defined as 
\begin{equation}\label{def_massnorm}
|v|_{\mass,p}:=\sup \left\{ \langle w, v \rangle \colon w \in \Lambda^{m}_{\R^{k}}(\R^{d}), \ \abs{w}_{\comass, p}\leq 1 \,  \right\}
\end{equation}
for any~$v\in\Lambda_{m, \R^{k}}(\R^{d})$.
In case~$k=1$, the mass and comass norms are independent of~$p$ 
(i.e.~$\abs{v}_{\mass,p} = \abs{v}_{\mass,1}$, $\abs{w}_{\comass, p} = \abs{w}_{\comass, 1}$
for any~$p\in [1, \, \infty]$, $v\in\Lambda_m(\R^d)$, $w\in\Lambda^m(\R^d)$) 
and reduce to the classical mass and comass. 
In this case, we write~$\abs{v}_{\mass} := \abs{v}_{\mass, 1}$,
$\abs{w}_{\comass} := \abs{w}_{\comass, 1}$ for any~$v\in\Lambda_m(\R^d)$,
$w\in\Lambda^m(\R^d)$.

\begin{remark}
\begin{itemize}
\item[(i)] The mass norm of a vector~$v\in\Lambda_{m, \R^{k}}(\R^{d})$
can be equivalently characterized as
\begin{equation} \label{massnorm-char}
|v|_{\mass,p} 
=\inf \left\{ \sum_{i=1}^l \|z_i\|_{p} \, |v_i| \colon  v=\sum_{i=1}^l z_i \otimes v_{i}, \, z_i \in \R^k, \, v_{i} \mbox{ is a simple $m$-vector in } \R^d \right\}
\end{equation}
(for a proof, see~\cite[\S~1.8.1]{FeBook} for the scalar case~$k=1$
and Lemma~\ref{characterizationmassnorm} in the appendix for the case~$k > 1$).
\item[(ii)] Let $v\in\Lambda_{m,\R^{k}}(\R^{d})$
be a vector with components~$v_1$, \ldots, $v_k\in\Lambda_m(\R^d)$.
If~$1 \leq p < \infty$, then there holds
\begin{equation} \label{comparison-massintro}
\left(\sum_{i=1}^k \abs{v_i}^p\right)^{\frac{1}{p}} \leq \left(\sum_{i=1}^k \abs{v_i}_{\mass}^p\right)^{\frac{1}{p}}
\leq \abs{v}_{\mass,p} \leq \sum_{i=1}^k \abs{v_i}_{\mass}.
\end{equation}
See Lemma \ref{comparenorms} in the appendix for the proof of~\eqref{comparison-massintro},
as well as more general results on the comparison 
between different norms of a given
$\R^k$-valued $m$-vector or $(\R^k)^*$-valued $m$-covector.
\end{itemize} 
\label{remarknotation}
\end{remark}

\begin{definition}
		An $(\R^{k})^{*}$-valued $m$-dimensional differential form,
		or~$(\R^{k})^{*}$-valued $m$-form, defined on $\R^{d}$ is a map 
		$$\omega\colon \R^{d} \longrightarrow \Lambda^{m}_{\R^{k}}(\R^{d})\,.$$
\end{definition}

As before, $\omega$ can be represented as 
$\omega=\omega_{1} e^{*}_{1}+\ldots+\omega_{k}e^{*}_{k}\,,$
and the regularity of $\omega$ is inherited from the
regularity of its components $\omega_{i}$, for $i=1,\ldots,k$. 
Let $\omega=\omega_{1} \, e^{*}_{1}+\ldots+\omega_{k} \, e^{*}_{k}\in C^{1}(\R^d;\Lambda^{m-1}_{\R^{k}}(\R^{d}))$. We denote
$${\rm d}\omega:={\rm d}\omega_{1} \, e^{*}_{1}+\ldots+{\rm d}\omega_{k} \, e^{*}_{k},$$
where ${\rm d}\omega_{i}$ is the differential of $\omega_{i}$.
By definition, we have ${\rm d} \omega\in C(\R^d;\Lambda^{m}_{\R^{k}}(\R^{d}))$.
We define the support of~$\omega$ as $\supp\, \omega := {\rm cl}\lbrace x\in \R^d \colon \omega(x) \neq 0\rbrace$, where ${\rm cl}$ indicates the closure of a set.
We can now define the notion of currents with coefficients in~$\R^k$.
\begin{definition}\label{normalcurrent}
		An $m$-dimensional current, or~$m$-current, $T$ with coefficients in $(\R^{k}, \, \| \cdot \|_{p})$ is a linear and continuous map
		\[
		 T\colon C^{\infty}_{\mathrm{c}}\left(\R^{d};\Lambda^{m}_{\R^{k}}(\R^{d})\right) \longrightarrow \R\,.
		\]
		The continuity of~$T$ is understood with respect to the (locally convex) topology on the space of test forms
		$C^{\infty}_{\mathrm{c}}\left(\R^{d};\Lambda^{m}_{\R^{k}}(\R^{d})\right)$, which is analogous to the standard topology on~$C^\infty_{\mathrm{c}}(\R^d; \R)$.
\end{definition}
 Observe that a vector-valued current
can be written in components, and that we will often
use the notation $T = (T_1, \ldots, T_k)$, 
where $T_i$ is the classical current defined by
\begin{equation}
T_i(\omega) := T(\omega \, e^*_i) 
\qquad \textrm{for any } 
\omega \in C^{\infty}_{\mathrm{c}}\left(\R^{d};\Lambda^{m}(\R^{d})\right).   
\end{equation}
The mass of $T$ is defined as
\[
    \mathbb{M}_p(T):=\sup \left\{ T(\omega):
    \omega \in C^{\infty}_{\mathrm{c}}\left(\R^{d};\Lambda^{m}_{\R^{k}}(\R^{d})\right) \! ,
     \ \|\omega\|_{\comass, p}  \leq 1 \right\}\!,
\]
where~$\|\omega\|_{\comass, p}=\sup_{x\in \R^{d}}|\omega(x)|_{\comass, p}$. When $k=1$, $\mathbb{M}_p$ is equal to the mass of classical currents for any $p\in [1,\infty]$, therefore we shall denote it by $\mathbb{M}$.

Moreover, we define the boundary $\partial T$ of $T$ as
an~$(m-1)$-dimensional current, such that 
\begin{equation} \label{boundary}
 \partial T(\omega):=T(\d\omega)
\end{equation}
for any~$\omega\in C^{\infty}_{\mathrm{c}}(\R^d, \Lambda^{m-1}_{\R^k}(\R^d))$.
The boundary operator acts component-wise.
The space of $\R^k$-currents of dimension $m$ is denoted by $\mathcal{D}^\prime(\R^d, \Lambda_{m,\R^k}(\R^d))=(C^{\infty}_{\mathrm{c}}(\R^d, \Lambda^{m}_{\R^k}(\R^d)))^\prime$.
In case $k=1$, we write~$\mathcal{D}^\prime(\R^d, \Lambda_{m}(\R^d))$ for the space of classical currents.
The support of the current~$T$, denoted by~$\spt(T)$, is defined by:
\begin{equation}
    \spt(T) := \R^d \setminus \cup \left\{ W \colon
    W\mbox{ is open, }\, T(\phi)=0 \mbox{ whenever } \phi \in C^{\infty}_{\mathrm{c}}\left(W;\Lambda^{m}_{\R^{k}}(\R^{d})\right)
    \right\}.
\end{equation}

Weak$^*$ convergence of a sequence of currents,
$T^\ell\rightharpoonup^* T$, is defined by duality with
the space of test forms~$C^\infty_{\mathrm{c}}\left(\R^{d};\Lambda^{m}(\R^{d})\right)$. It turns out that a sequence of $\R^k$-valued currents
converge weakly$^*$ if and only the~$j$-th components of the sequence 
converge weakly$^*$ to the $j$-th component of the limit, for any index~$j$.

\begin{definition}
 A current $T$ is said to be an $\R^k$-normal current
 if $\mathbb{M}_p(T)+\mathbb{M}_p(\partial T)<\infty$.
\end{definition}



\begin{definition}\label{definitionofcurrent}
        A $m$-dimensional rectifiable current with coefficients in $(\Z^{k}, \| \cdot \|_{p})$ (or a $\Z^k$-rectifiable current) is a $m$-dimensional current such that there exists a $m$-dimensional oriented rectifiable set $\Sigma\subset\R^d$, an approximate tangent vectorfield $\tau\colon \Sigma \longrightarrow \Lambda_{m}(\R^{d})$, and a density function $\theta\colon \Sigma \longrightarrow \Z^{k}$ such that 
		\begin{equation} \label{rectifiable_current}
		 T(\omega)=\int_{\Sigma}\langle \omega (x); \tau (x), \theta (x) \rangle \,\d\mathcal{H}^{m}(x)
		\end{equation}
		for every $\omega \in C^{\infty}_{\mathrm{c}}\left(\R^{d};\Lambda^{m}_{\R^{k}}(\R^{d})\right)$. We say that $T$ is an integral current if both $T$ and $\partial T$ are $\Z^k$-rectifiable currents. A $m$-dimensional polyhedral current is a finite union of $m$-dimensional oriented simplexes $\Sigma_i$, each equipped with constant multiplicity~$\sigma_i\in \Z^k$, such that
		for any $i\neq j$, the intersection between $\Sigma_i$ and $\Sigma_j$ is either empty or a common face. In addition, an $m$-dimensional 
		current $T$ is said to be a cycle if $\partial T=0$; $T$ is called a boundary if there exists an~$(m+1)$-dimensional current~$R$ such that~$T = \partial R$.
\end{definition}
We write~$T := \llbracket\Sigma, \tau, \theta\rrbracket$
for the rectifiable current defined in~\eqref{rectifiable_current}.

%
%
%
%

\begin{remark}\label{remarksoncurrents}
	Given an integrable vector field $\tau=(\tau_1,\ldots,\tau_k) \in L^{1}\left(\R^{d};\Lambda_{m,\R^k}(\R^{d})\right)$, integration against~$\tau$
	defines a current $T$ 
	--- that is, each component of~$T = (T_1, \, \ldots, \, T_k)$
	is represented by integration against~$\tau_i\in L^{1}\left(\R^{d};\Lambda_{m}(\R^{d})\right)$, $T_i=\tau_i \wedge \mathcal{L}^{d}$.
	Then, classical arguments in measure theory show that the current~$T$
	has finite mass and
	\[
	\mathbb{M}_p(T) = \int_{\R^d}| \tau(x) |_{\mass,p}\, \d\mathcal{L}^{d}(x) . 
	\]
	In a similar spirit, given an $m$-dimensional rectifiable current~$T$ with coefficients in $(\Z^k,\| \cdot \|_{p})$, the mass of~$T$ can be written as
	\[
	\mathbb{M}_p(T) = \int_{\Sigma}\| \theta(x) \|_{p} \, \d\mathcal{H}^{m}(x).
	\]
	Moreover, $T$ can be expressed as $(T_1,\ldots,T_k)$, where each component $T_i$ is a $m$-dimensional classical rectifiable current with coefficients in $\Z$ (see~\cite{Ma}).
\end{remark}

We now recall Plateau's problem in the setting of currents with coefficients in the groups $(\R^{k}, \| \, \|_{p})$, $(\Z^{k}, \| \, \|_{p})$. Given a $(d-2)$-dimensional $\mathbb{Z}^k$-integral flat boundary~$S$ 
with compact support, we can define Plateau's problem for~$S$
in integral currents as
\begin{equation} \label{Plateau_int1}
    \begin{split}
	\mathbb{P}_{\Z^k,p}(S) := \inf\left\{\mathbb{M}_p(T) \colon T \textrm{ is a rectifiable } 
	\Z^k\textrm{-current of dimension } (d- 1), \ \partial T = S \right\} \! .
    \end{split}
\end{equation}
The Plateau problem for~$S$ in normal currents is
\begin{equation} \label{Plateau_normal1}
	\begin{split}
	\mathbb{P}_{\R^k,p}(S) := \inf\left\{\mathbb{M}_p(T) \colon T \textrm{ is a normal } 
	\R^k\textrm{-current of dimension } (d- 1), \ \partial T = S \right\} \! .
	\end{split}
\end{equation}

One of the advantages of the approach based on the theory of currents
is the possibility to define the (dual) notion of calibration,
which provides us with a tool to check whether
a given configuration is a minimizer.
We briefly recall the definition of calibration, which
will be useful in the next sections.
\begin{definition}\label{Calibration} 
		Consider a $\Z^k$-rectifiable current of dimension $m$, $T=\llbracket\Sigma, \tau, \theta\rrbracket$ in the ambient space $\R^{d}$. A smooth $(\R^{k})^{*}$-valued $m$-dimensional differential form $\omega$ in $\R^{d}$ is a calibration for $T$ if the following conditions hold:
		\begin{enumerate}
			\item[(i)]\label{clr1} $\langle \omega(x); \tau(x), \theta (x)\rangle= \| \theta(x) \|_{p}$ for $\mathcal{H}^{m}$-a.e~$x\in \Sigma$;
			\item[(ii)]\label{clr2} $\omega$ is closed, i.e, ${\rm d}\omega=0;$
			\item[(iii)]\label{clr3} for every $x\in \R^{d}$, for every simple unit vector $t \in  \Lambda_{m}(\R^{d})$ and for every $h\in \Z^{k}$, we have that
			$$\langle \omega(x); t, h \rangle \leq \|h \|_{p}.$$
		\end{enumerate}
\end{definition}
If a current $T$ admits a calibration, then it is a minimizer of the mass among all normal $\R^k$ currents its homology class. (See for instance \cite{Ma} for a proof of this claim). Therefore, the existence of a calibration for an integral~$\Z^k$-current~$T$ is a sufficient condition to prove that~$\mathbb{P}_{\Z^k,p}(\partial T) = \mathbb{P}_{\R^k,p}(\partial T)$.
The question of whether or not a minimizing current admits a calibration is a delicate issue (see for instance \cite[Section 1.4, Section 3.1.2]{Ma} and references therein, as well as \cite[Section 2.1 and Example 16]{BaLe}).

\subsubsection{Push-forward of currents}
We recall the notion of push-forward of currents. The push-forward
is first introduced for classical currents (see e.g.~\cite[Section 7.4.2]{Krantz2008}), then extended to currents
with coefficients in the group $(\R^k,\| \cdot \|_{p})$.
Let $f\colon\R^d\to \R^l$ be a smooth function, and let~$\omega$ be a smooth, compactly supported $m$-differential form in $\R^l$.
We define the pull-back of $\omega$, $f^{\#}(\omega)$, as a smooth differential form in $\R^d$, given by
\begin{equation}
 f^{\#}(\omega)(x)(v_1 \wedge \ldots \wedge v_m)
 :=\omega(f(x)) (\d f(v_1) \wedge \ldots \wedge \d f(v_m))
\end{equation}
for any $x\in \R^d$ and any simple $m$-vector $v=v_1 \wedge \ldots \wedge v_m$. 
Next, let $T$ be a classical normal current in $\R^d$. We assume again that~$f\colon\R^d\to \R^l$
is smooth and that $f\mid_{\spt(T)}$ is proper (i.e., for any compact set $A\subset \R^l$, $f^{-1}(A)\cap \spt(T)$ is compact).
Then, we define the push-forward of the current $T$, $f_{\#}(T)$,
to be the $m$-dimensional current in~$\R^l$ given by
\begin{equation}\label{pushfoward1}
f_{\#}(T)(\omega):=T(\psi f^{\#}(\omega))
\end{equation}
for any~$\omega \in C^{\infty}_{\mathrm{c}}(\R^{l}; \Lambda^{m}(\R^l))$,
where $\psi$ is any compactly supported, smooth function in $\R^d$
that is equal to $1$ in a neighborhood of $\spt(T)\cap \supp f^{\#}(\omega)$. It can be checked that the right-hand side of~\eqref{pushfoward1}
is independent of the choice of~$\psi$, and that
\begin{equation}
\partial f_{\#}(T)=f_{\#}(\partial T).
\end{equation}
This notion can be extended to the case~$f$ is a Lipschitz map (see \cite[Section 7.4.2, Lemma 7.4.3]{Krantz2008} for the details). Moreover, given a Lipschitz map~$f$ (with Lipschitz constant~$\Lambda$) such that the restriction of $f$ to the support of $T$ is proper, there holds
\begin{equation}
\mathbb{M}(f_{\#}(T)) \leq \Lambda \, \mathbb{M}(T)
\end{equation}
(see again \cite[Section 7.4.2, Lemma 7.4.3]{Krantz2008}).
The push-forward can be defined in case~$T = (T_1, \, \ldots, \, T_k)$
is a current with coefficients in the group 
$(\R^k, \|\cdot\|_{p})$, $(\Z^k, \|\cdot\|_{p})$
by working component-wise,
i.e.~$f_{\#}(T) := (f_{\#}(T_1), \, \ldots, \, f_{\#}(T_k))$.

\subsubsection{The Hodge-star operator and the codifferential}

The Hodge-star operator (on a Riemannian manifold of dimension~$d$)
is usually defined as a map between (classical)~$m$-covectors 
and~$(d-m)$-covectors. Here, instead, we find it convenient
to regard it as an operator between vectors and covectors.
These approaches are essentially equivalent;
the operator we consider here reduces to the classical
one, up to composition with the isomorphisms~$\#$, $\flat$.
However, for the convenience of the reader, 
we recall the definitions.

We define the Hodge-star operator as a map
\[
\star\colon \Lambda^m(\R^d)\to\Lambda_{d-m}(\R^d),
\]
in the following way: for any~$\beta\in\Lambda^m(\R^d)$,
$\star\beta$ is the unique element of~$\Lambda_{d-m}(\R^d)$ 
such that
\begin{align}\label{Hodge}
\langle \alpha, \star\beta \rangle 
:= \langle \beta \wedge \alpha,
\, e_1 \wedge \ldots \wedge e_d\rangle
\qquad \mbox{ for every }\alpha \in \Lambda^{d-m}(\R^{d}).
\end{align}
\begin{remark}
There is a variant definition of Hodge star operator which is defined by
\begin{align} \label{Hodge1}
\langle \alpha, \star\beta \rangle 
:= \langle \alpha \wedge \beta, 
\, e_1 \wedge \ldots \wedge e_d\rangle
\qquad \mbox{ for every }\alpha \in \Lambda^{d-m}(\R^{d})
\end{align}
for any~$\beta\in\Lambda^m(\R^d)$. 
The two definitions agree up to a sign. 
In this paper, we choose the definition in~\eqref{Hodge} for the compatibility in the sign involving the Hodge star operator, pre-jacobian, and distributional Jacobian later (see for instance~\eqref{compatibility}).
\end{remark}
An operator~$\Lambda_m(\R^d)\to\Lambda^{d-m}(\R^d)$,
still denoted~$\star$ for simplicity, is defined in a way
completely analogous to~\eqref{Hodge}.
The Hodge-star operator can be described explicitely
as follows: for any multi-index~$I$, there holds
\begin{align} \label{Hodge-explicit}
 \star\d x_I = \sigma(I, \, I^\prime) \,    e_{I^\prime}
 \qquad \textrm{and} \qquad
 \star   e_I = \sigma(I, \, I^\prime) \, \d x_{I^\prime}
\end{align}
where~$I^\prime$ is the set of indices that are not contained in~$I$
and~$\sigma(I^\prime, \, I)$ is the sign of the permutation~$(I^\prime, \, I)$.
From~\eqref{Hodge} and~\eqref{Hodge-explicit}, 
it follows that~$\star$ is an isometry and that
\begin{equation} \label{starstar}
 \star\star\beta = (-1)^{m(d-m)} \beta
\end{equation}
whenever~$\beta$ is an~$m$-vector or an~$m$-covector.
As a consequence, we have
\begin{equation} \label{Hodge_adj}
 \langle \alpha, \, \star\beta\rangle 
 = (-1)^{m(d-m)} \langle \star\alpha, \, \beta\rangle
\end{equation}
for any~$\alpha\in\Lambda^{d-m}(\R^d)$, $\beta\in\Lambda^m(\R^d)$
as well as any~$\alpha\in\Lambda_{d-m}(\R^d)$, $\beta\in\Lambda_m(\R^d)$.

The Hodge-star operator extends to an operator
between form-valued distributions and (classical) currents.
More precisely, let $\mathcal{D}^\prime(\R^d, \Lambda^m(\R^d))=(C^{\infty}_{\mathrm{c}}(\R^d, \Lambda_m(\R^d)))^\prime$ be the space of $m$-dimensional form-valued distributions
and let $\mathcal{D}^\prime(\R^d, \Lambda_{d-m}(\R^d))=(C^{\infty}_{\mathrm{c}}(\R^d, \Lambda^{d-m}(\R^d)))^\prime$ be the space of $(d-m)$-dimensional classical currents.
(Both $C^{\infty}_{\mathrm{c}}(\R^d, \Lambda_m(\R^d))$
and $C^{\infty}_{\mathrm{c}}(\R^d, \Lambda^m(\R^d))$ are equipped 
with the locally convex topology of test functions.)
In view of~\eqref{Hodge_adj}, it is natural to define
\[
\star\colon \mathcal{D}^\prime(\R^d, \Lambda^m(\R^d))\to \mathcal{D}^\prime(\R^d, \Lambda_{d-m}(\R^d)), \qquad
\star\colon \mathcal{D}^\prime(\R^d, \Lambda_m(\R^d))\to \mathcal{D}^\prime(\R^d, \Lambda^{d-m}(\R^d))
\]
by duality, in the following way: for any $\omega \in \mathcal{D}^\prime(\R^d, \Lambda^m(\R^d))$, the associated current $\star \omega$ is given by
\begin{equation} \label{Hodge-distr1}
\star \omega (\tau) :=(-1)^{m(d-m)} \omega(\star \tau)
\end{equation}
for any $\tau \in C^{\infty}_{\mathrm{c}}(\R^d, \Lambda^{d-m}(\R^d))$.
Similarly, given a current~$S\in \mathcal{D}^\prime(\R^d, \Lambda_m(\R^d))$,
we define the form-valued distribution~$\star S$ as
\begin{equation} \label{Hodge-distr2}
\star S (v) :=(-1)^{m(d-m)} S(\star v)
\end{equation}
for any $v\in C^{\infty}_{\mathrm{c}}(\R^d, \Lambda_{d-m}(\R^d))$.
The Hodge-star operator may be used to define the codifferential operator~$\d^*$:
\begin{align} \label{adjointoperatorofd}
 \d^{*} := (-1)^{d(m-1)+1} \star \d \star\colon C^{\infty}_{\mathrm{c}}(\R^d; \Lambda_{m}(\R^d))\to C^{\infty}_{\mathrm{c}}(\R^d; \Lambda_{m-1}(\R^d)).
\end{align}
While usually~$\d^*$ is defined as an operator between forms,
here we regard~$\d^*$ as an operator between vector fields.
This is consistent with our definition of the Hodge operator~$\star$, 
which maps vectors to covectors and vice-versa.
In spite of this unusual choice, the codifferential is still 
the formal adjoint of the exterior differential~$\d$ --- that is, for any $\alpha\in C^{\infty}_{\mathrm{c}}(\R^d; \Lambda^{m-1}(\R^d))$ 
and $\beta\in  C^{\infty}_{\mathrm{c}}(\R^d; \Lambda_{m}(\R^d))$, we have 
\begin{equation}
\int_{\R^d}\langle \d\alpha(x), \beta(x) \rangle \, \d x
= \int_{\R^d} \langle \alpha(x), \d^* \beta(x) \rangle \, \d x.
\end{equation}
We define the differential in the sense of distributions as follows:
given~$\omega\in\mathcal{D}^\prime(\R^d, \Lambda^m(\R^d))$,
we define~$\d\omega\in\mathcal{D}^\prime(\R^d, \Lambda^{m+1}(\R^d))$
by
\begin{equation} \label{d-distributions}
 \d\omega(v) := \omega(\d^* v)
\end{equation}
for any test vector-field~$v\in C^{\infty}_{\mathrm{c}}(\R^d, \Lambda_{m+1}(\R^d))$.
By comparing~\eqref{d-distributions} with the definition 
of the boundary operator~\eqref{boundary},
we deduce that the boundary and the codifferential
agree on smooth vector fields. More precisely,
if we identify a vector field~$v\in C^\infty_{\mathrm{c}}(\R^d, \, \Lambda_m(\R^d))$
with a current (defined by integration against~$v$), 
then~$\partial v$ is the current carried by~$\d^*v$.
(For instance, on~$1$-dimensional currents,
$\partial = -\Div$ in the sense of distributions.)
Moreover, the Hodge-star operator exchanges differentials and boundaries ---
that is, for any~$\omega \in \mathcal{D}^\prime(\R^d, \Lambda^m(\R^d))$,
and any~$T\in \mathcal{D}^\prime(\R^d, \Lambda_m(\R^d))$, there holds
\begin{align} 
 \star(\d\omega) &= (-1)^{m - 1} \, \partial(\star\omega), \label{partial-d} \\
 \d(\star T) &= - \, \star(\partial T). \label{partial-d-bis}
\end{align}
Equations~\eqref{partial-d} and~\eqref{partial-d-bis}
are a rather direct consequence
of the definitions and properties we have recalled in this section,
and we skip their proof.

For vector-valued currents, everything we said in
this section applies component-wise. 
For instance, if~$w\in\Lambda^m_{\R^k}(\R^d)$ and~$w$ 
can be written component-wise as~$w = (w_1, \, \ldots, \, w_k)$,
we define~$\star w := (\star w_1, \, \ldots, \, \star w_k)$.
In a similar way if~$v = (v_1, \, \ldots, \, v_k)\in\Lambda_{m,\R^k}(\R^d)$,
we set~$\star v := (\star v_1, \, \ldots, \, \star v_k)$.
The following property holds:
if~$\beta\in\Lambda^m(\R^d)$  (respectively, $\beta\in\Lambda_m(\R^d)$), 
$z\in(\R^k)^*$ (respectively, $z\in\R^k$)
and~$I\colon\R^k\to(\R^k)^*$ is the isomorphism
induced by the canonical basis in~$\R^k$, 
then~$\star(z\otimes\beta) = I^{-1}(z)\otimes\star\beta$
(respectively, $\star(z\otimes\beta) = I(z)\otimes\star\beta$).

\subsection{Convolution of currents}

\subsubsection{Convolution of currents with mollifiers}\label{convolutionwithmollifiers}

We recall the definition and basic properties of convolution
for currents. We will be interested only in
very special cases, i.e. convolution with
a sequence of mollifiers and with a vector field;
for a more general treatment, we refer e.g.~to~\cite{ABO1}.

In some of our technical results, we need
a radially symmetric, scale-invariant sequence of mollifiers.
Let~$\rho\in C^{\infty}_{\mathrm{c}}(\R^d)$
be a \emph{radial} function (i.e.~$\rho(x) = \bar{\rho}(|x|)$ for any~$x\in\R^d$, for some function~$\bar{\rho}\colon\R\to\R$)
such that~$0 \leq \rho \leq 1$, 
$\spt(\rho)\subseteq\bar{\mathbb{B}}^d_1(0)$
and $\int_{\R^d} \rho(x) \, \d x = 1$.
For~$\epsilon > 0$, we define
\begin{equation} \label{mollifier}
 \rho_\epsilon(x) := \epsilon^{-d} \rho\left(\frac{x}{\epsilon}\right)
 \qquad \textrm{for } x\in\R^d.
\end{equation}
Then, $\rho_{\epsilon}$ is a symmetric mollifier in $\R^{d}$, with
$\spt(\rho_{\epsilon})\subset \bar{\mathbb{B}}^d_{\epsilon}(0)$
and $\int_{\R^d} \rho_{\epsilon}(x) \, \d x=1$.
Let $\omega=\omega_1 e^*_1+\ldots+\omega_k e^*_k \in C^{\infty}_{\mathrm{c}}\left(\R^{d};\Lambda^{m}_{\R^{k}}(\R^{d})\right)$ be a test form. 
The convolution of $\omega$ and $\rho_{\epsilon}$ is defined component-wise as
\begin{equation}
(\omega * \rho_{\epsilon}) (x) 
:= \int_{\R^d} \omega (x-y)\rho_{\epsilon}(y) \, \d y
= \sum_{i=1}^{k} \left(\int_{\R^d} \omega_i (x-y)\rho_{\epsilon}(y) \, \d y\right)  e^*_i,
\end{equation}
for any~$x\in\R^d$.
Let $T=(T_1,\ldots,T_k)$ be an $\R^k$-normal current of dimension $m$ in $\R^d$. We define the convolution between $T$ and $\rho_{\epsilon}$ by
\begin{equation}
 (T*\rho_{\epsilon})(\omega):=T(\omega * \rho_{\epsilon})
\end{equation}
for any $\omega \in C^{\infty}_{\mathrm{c}}\left(\R^{d};\Lambda^{m}_{\R^{k}}(\R^{d})\right)$. As the convolution for test forms is defined component-wise,
we can think of~$T*\rho_\epsilon$ as 
being defined component-wise, too
(for the convolution of classical currents with a mollifier, 
see for instance \cite[Section~7.3, Definition $7.3.2$]{Krantz2008}). \label{Q2}
Then, $T*\rho_{\epsilon}=(T_{1,\epsilon},\ldots,T_{k,\epsilon})$, where each component $T_{i,\epsilon} = T_i*\rho_\epsilon$ is represented by a smooth $m$-vector-valued function in $\R^d$. 

\begin{lemma} \label{convolutionmollifier}
 For each $\R^k$-normal current~$T$, the following properties hold:
 \begin{itemize}
	\item[(i)] $T*\rho_{\epsilon} \overset{\ast}{\rightharpoonup} T$, that is, for each $\omega \in C^{\infty}_{\mathrm{c}}\left(\R^{d};\Lambda^{m}_{\R^{k}}(\R^{d})\right)$,
	we have $T*\rho_{\epsilon}(\omega) \to T(\omega)$;
	\item[(ii)] $\partial (T*\rho_{\epsilon})=\partial T * \rho_{\epsilon}$,
	\item[(iii)] for any $p\in [1, \, \infty]$ fixed,
	we have $\mathbb{M}_p(T*\rho_{\epsilon}) \to \mathbb{M}_p(T)$ 
	as~$\epsilon \to 0$.
\end{itemize}	
\end{lemma}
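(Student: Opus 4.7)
The plan is to reduce the vector-valued statement to the classical scalar case by exploiting the fact that, as defined in the paper, both the convolution and the boundary operator act component-wise. Writing $T = (T_1, \ldots, T_k)$ and $T*\rho_\epsilon = (T_1*\rho_\epsilon, \ldots, T_k*\rho_\epsilon)$, each component is a classical normal current convolved with a standard mollifier, for which (i)--(iii) are known. The only work is to transfer the mass statement, which is componentwise-defined in the scalar case but depends on the joint $p$-comass norm here.

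For (i), I would observe that for every fixed test form $\omega \in C^\infty_{\mathrm{c}}(\R^d; \Lambda^m_{\R^k}(\R^d))$, the convolution $\omega * \rho_\epsilon$ converges to $\omega$ as $\epsilon \to 0$ in the topology of $C^\infty_{\mathrm{c}}$ (uniform convergence on a common compact set, together with all derivatives). By the continuity of $T$ as prescribed in Definition~\ref{normalcurrent}, one gets $(T*\rho_\epsilon)(\omega) = T(\omega*\rho_\epsilon) \to T(\omega)$.

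For (ii), I would carry out a direct duality computation. For any test form $\omega \in C^\infty_{\mathrm{c}}(\R^d; \Lambda^{m-1}_{\R^k}(\R^d))$,
\begin{equation*}
 \partial(T*\rho_\epsilon)(\omega) = (T*\rho_\epsilon)(\d\omega) = T\bigl((\d\omega)*\rho_\epsilon\bigr) = T\bigl(\d(\omega*\rho_\epsilon)\bigr) = \partial T(\omega*\rho_\epsilon) = (\partial T * \rho_\epsilon)(\omega),
\end{equation*}
where the key identity $(\d\omega)*\rho_\epsilon = \d(\omega*\rho_\epsilon)$ holds because differentiation and convolution with $\rho_\epsilon$ commute componentwise.

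For (iii), I would first establish the monotonicity bound $\mathbb{M}_p(T*\rho_\epsilon) \le \mathbb{M}_p(T)$ for every $\epsilon>0$. This rests on the pointwise inequality $|\omega*\rho_\epsilon(x)|_{\comass,p} \le \|\omega\|_{\comass,p}$, which follows since $\rho_\epsilon \ge 0$ with unit integral makes $\omega*\rho_\epsilon(x)$ a convex average of values of $\omega$, and the $p$-comass is a convex function of the covector (being a sup of linear functionals in~\eqref{comass}). Consequently, if $\|\omega\|_{\comass,p}\le 1$, then $\|\omega*\rho_\epsilon\|_{\comass,p}\le 1$, so
\begin{equation*}
(T*\rho_\epsilon)(\omega) = T(\omega*\rho_\epsilon) \le \mathbb{M}_p(T),
\end{equation*}
whence $\mathbb{M}_p(T*\rho_\epsilon) \le \mathbb{M}_p(T)$, giving $\limsup_{\epsilon\to 0}\mathbb{M}_p(T*\rho_\epsilon) \le \mathbb{M}_p(T)$. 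The matching lower bound comes from lower semicontinuity of $\mathbb{M}_p$ under weak$^*$ convergence, which is immediate from its definition as a supremum of linear functionals on a fixed class of test forms: combining with (i) yields $\mathbb{M}_p(T) \le \liminf_{\epsilon\to 0}\mathbb{M}_p(T*\rho_\epsilon)$, and (iii) follows. The only subtle point I anticipate is the convexity/averaging argument for the $p$-comass, but it is a direct consequence of the characterization~\eqref{comass} as a supremum over unit simple $m$-vectors.
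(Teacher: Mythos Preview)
Your proof is correct and follows essentially the same route as the paper: property~(ii) is derived from the identity $\d(\omega*\rho_\epsilon)=(\d\omega)*\rho_\epsilon$, and property~(iii) combines the Jensen-type bound $\|\omega*\rho_\epsilon\|_{\comass,p}\le\|\omega\|_{\comass,p}$ (yielding $\mathbb{M}_p(T*\rho_\epsilon)\le\mathbb{M}_p(T)$) with the lower semicontinuity of $\mathbb{M}_p$ under the weak$^*$ convergence from~(i). The only cosmetic difference is that for~(i) you argue directly via convergence of $\omega*\rho_\epsilon\to\omega$ in the test-form topology, whereas the paper reduces component-wise to the classical case; both are equivalent.
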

\begin{proof}
 Property~(i) is deduced from properties of classical currents 
 (see e.g.~\cite[Section~7.3, Lemma~7.3.3]{Krantz2008}) 
 by reasoning com\-po\-nent-\-wise. 
 Property~(ii) comes from the fact that
 \begin{equation}
  \d(\omega*\rho_{\epsilon})=\d\omega * \rho_{\epsilon}.
 \end{equation}
 To prove Property~(iii), we apply Jensen's inequality.
 Let $\omega \in C^{\infty}_{\mathrm{c}}\left(\R^{d};\Lambda^{m}_{\R^{k}}(\R^{d})\right)$. We have
 \begin{equation}\label{inequalitycomassnormconvolution}
  \begin{aligned}
   |\omega*\rho_{\epsilon}(x) |_{\comass, p}&=\left| \int_{\R^d}\omega(x-y)\, \rho_{\epsilon}(y) \, \d y \right|_{\comass, p}\\
   &\leq  \int_{\R^d}\left|\omega(x-y)\right|_{\comass, p} \, \rho_{\epsilon}(y) \, \d y \\
   &\leq \|\omega \|_{\comass, p} \int_{\R^d} \rho_{\epsilon}(y) \, \d y \\
   &= \|\omega \|_{\comass, p}.
  \end{aligned}
 \end{equation}
 Therefore,
 \begin{equation} \label{massconvolution0}
  \begin{aligned}
   \|\omega*\rho_{\epsilon} \|_{\comass, p} \leq \|\omega \|_{\comass, p}
  \end{aligned}
 \end{equation}
 for any~$p\in [1, \, \infty]$. In turn, \eqref{massconvolution0} implies that
 \begin{equation}\label{massconvolution1}
  \mathbb{M}_p(T)\geq \mathbb{M}_p(T*\rho_{\epsilon}).
 \end{equation}
 On the other hand, by the Property (i) we know that $T*\rho_{\epsilon} \overset{\ast}{\rightharpoonup} T$, and by lower semicontinuity of $\mathbb{M}_p$, we deduce 
 \begin{equation}\label{massconvolution2}
  \liminf_{\epsilon \to 0} \mathbb{M}_p(T*\rho_{\epsilon})\geq \mathbb{M}_p(T).
 \end{equation}
 From the inequalities~\eqref{massconvolution1} and~\eqref{massconvolution2}, we obtain~(iii).  
\end{proof}

In a similar way, we can define the convolution between $T$ and 
a bounded measure $\mu$ on $\R^d$, 
as
\begin{equation}
(T*\mu)(\omega):=T(\omega*\mu),
\end{equation}
where $(\omega*\mu)(x):=\int_{\R^d}\omega(x-y)\,\d\mu(y)$ for any test form~$\omega \in C^{\infty}_{\mathrm{c}}\left(\R^{d};\Lambda^{m}_{\R^{k}}(\R^{d})\right)$ and any~$x\in\R^d$.

\subsubsection{Convolution of currents with vector-valued maps and differential forms}
\label{convolutionwithvectorfunction}

Let $R=(R_1,\ldots,R_d)$ be a function in $L^{1}(\R^d, \R^d)$ with compact support such that $\Div R$ is a bounded measure on $\R^d$, and let~$T = (T_1, \, \ldots, \, T_k)$ be an $m$-dimensional normal current, with coefficients in the normed group $(\R^k, \| \cdot\|_{p})$. 
We aim to define the object
$T*R$, which will be an $\R^k$-normal current of dimension~$m+1$.
First, we briefly recall the contraction of covectors with vectors.
Let $\beta$ be an $h$-covector and $v$ be an $k$-vector with $h>k$.
We define the contraction $\beta \llcorner v$ to be 
an~$(h-k)$-covector, given by
\begin{equation}
\langle \beta \llcorner v, w \rangle := \langle \beta, (v \wedge w) \rangle
\end{equation}
for any $w \in \Lambda_{h-k}(\R^{d})$. 
Now, we define the convolution~$T*R$ by its action on a test form~$\omega = (\omega_1, \ldots, \omega_k) \in C^{\infty}_{\mathrm{c}}\left(\R^{d};\Lambda^{m+1}_{\R^{k}}(\R^{d})\right)$, as 
\begin{equation}\label{convolutionwithvector}
(T*R)(\omega) := \mathop{\sum_{i=1}^k} T_i(\bar{R}*\omega_i),
\end{equation}
where~$\bar{R}$ is the vector field given by~$\bar{R}(x):=R(-x)$, 
and the convolution $\bar{R}*\omega_i$ is defined by
\begin{equation}
 (\bar{R}*\omega_i)(x):=\int_{\R^d}\omega_{i}(x-y)\llcorner \bar{R} (y) \, \d y
\end{equation} 
for any~$x\in\R^d$. We observe that, for each~$i$, $\bar{R}*\omega_i$
is a smooth $m$-dimensional form with compact support, since $\omega_{i}$ is a smooth differential form with compact support and $R$ also has compact support.
Therefore, the right-hand side of~\eqref{convolutionwithvector}
is well-defined. 
From~\eqref{convolutionwithvector}, one has
\begin{equation} \label{massT*R}
\begin{aligned}
|T*R(\omega)|&\leq \mathop{\sum_{i=1}^k} \mathbb{M}(T_i)\| R*\omega_i \|_{L^{\infty}(\R^d)}\\
&\leq \mathop{\sum_{i=1}^k} \mathbb{M}_p(T)\| R * \omega_i \|_{L^{\infty}(\R^d)}\\
&\leq k\sqrt{k} \, \mathbb{M}_p(T)\|R\|_{L^{1}(\R^d)}
\end{aligned}
\end{equation}
for any~$\omega \in C^{\infty}_{\mathrm{c}}\left(\R^{d};\Lambda^{m+1}_{\R^{k}}(\R^{d})\right)$ such that $|\omega|_{\comass, p} \leq 1$.
Therefore,
\begin{equation} \label{massconvolution-vector}
\mathbb{M}_p(T*R)\leq k\sqrt{k}\,\mathbb{M}_p(T)\| R \|_{L^{1}(\R^d)}.
\end{equation}
Moreover, there holds
\begin{equation}\label{convolutionvector2}
\partial (T*R)=- \div R*T + (-1)^{m}R*\partial T
\end{equation}
in the sense of distributions. The equality~\eqref{convolutionvector2} 
follows by the analogous identity for classical currents
(see \cite[Section~2.8, Equation~(2.5)]{ABO1}), applied component-wise .

\begin{lemma} \label{lemma:smooth-convolution}
 Let~$T$ be a normal~$\R^k$-current of dimension~$m$.
 Let~$R\in L^1(\R^d, \, \R^d)$ be a vector field of compact support.
 Then, the current~$T*R$
 can be identified with an element of~$L^1(\R^d, \, \Lambda_{m+1,\R^k}(\R^d))$.
 Moreover, if~$R$ is smooth in~$\R^d\setminus\{0\}$,
 then (the vector field corresponding to)
 $T*R$ is smooth in~$\R^d\setminus\spt T$.
\end{lemma}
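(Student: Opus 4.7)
Since $(T*R)(\omega) = \sum_{i=1}^k T_i(\bar R * \omega_i)$, the statement reduces to the scalar case, and I would fix an index $i$ and treat the classical normal current $T_i$. By the Riesz representation theorem, there is a finite Radon measure $\mu_i$ on $\R^d$ (with total mass $\mathbb{M}(T_i)$) and a $\mu_i$-measurable $m$-vector field $\vec{T}_i$ with $|\vec{T}_i|=1$ $\mu_i$-a.e.\ such that $T_i(\omega) = \int_{\R^d}\langle \omega(x), \vec{T}_i(x)\rangle\,\d\mu_i(x)$ for every test form. Unrolling the definition of $\bar R * \omega_i$, using the contraction-wedge identity $\langle \beta \llcorner v, w\rangle = \langle \beta, v\wedge w\rangle$, and then applying Fubini (justified by $R\in L^1$ and $\mu_i$ finite) followed by the change of variables $z := x-y$ yields
\begin{equation*}
T_i(\bar R*\omega_i) \;=\; \int_{\R^d}\!\left\langle \omega_i(z),\ V_i(z)\right\rangle\,\d z,
\qquad
V_i(z) := \int_{\R^d} R(z-x)\wedge \vec{T}_i(x)\,\d\mu_i(x),
\end{equation*}
where I used that $\bar R(x-z) = R(z-x)$. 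Thus $(T*R)_i$ is represented by integration against $V_i$.

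The $L^1$ bound follows by Tonelli:
\begin{equation*}
\|V_i\|_{L^1(\R^d)} \leq \int_{\R^d}\!\int_{\R^d} |R(z-x)|\,|\vec{T}_i(x)|\,\d\mu_i(x)\,\d z \leq \|R\|_{L^1(\R^d)}\,\mathbb{M}(T_i).
\end{equation*}
Collecting the $k$ components identifies $T*R$ with the vector field $V = (V_1,\ldots,V_k) \in L^1(\R^d,\Lambda_{m+1,\R^k}(\R^d))$, as required.

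For the regularity claim, assume $R\in C^\infty(\R^d\setminus\{0\})$ and fix $z_0\in\R^d\setminus\spt T$. I would pick an open neighborhood $U$ of $z_0$ with $\delta := \mathrm{dist}(\bar U, \spt T) > 0$. Since $\spt\mu_i \subseteq \spt T_i \subseteq \spt T$, for every $z\in U$ the integral defining $V_i(z)$ only involves points $x$ with $|z-x|\geq \delta$, so only the values of $R$ on the set $\{w : |w|\geq \delta\}$ are probed. On that set $R$ has compact support and is smooth, hence every derivative $\partial^\alpha R$ is uniformly bounded there. This provides the dominating function needed to differentiate under the integral sign an arbitrary number of times, showing that $V_i \in C^\infty(U)$ for each $i$, and therefore $T*R$ is smooth on $\R^d\setminus\spt T$.

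The only slightly delicate step is the change of variables that rewrites the ``right-convolution'' $\bar R*\omega_i$ as an integral of $R(z-\cdot)$ against $\mu_i$; once this formula for $V_i$ is in hand, the $L^1$ bound is Tonelli and the smoothness is standard differentiation under the integral sign, since the compact support of $R$ together with $\mathrm{dist}(U,\spt T) > 0$ keeps the argument of $R$ uniformly bounded away from its singularity at the origin.
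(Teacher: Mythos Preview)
Your proof is correct, but it takes a different route from the paper's. The paper argues abstractly: it first observes that $T*R$ has finite mass, identifies it with a vector-valued Radon measure via Riesz, and then shows this measure is absolutely continuous with respect to Lebesgue by an $\varepsilon$--$\delta$ argument exploiting the uniform integrability of~$R$; the Radon--Nikodym theorem then yields the $L^1$-density, without ever producing a formula for it. For smoothness, the paper replaces~$R$ by a globally smooth~$R_\delta$ that agrees with~$R$ on~$\{|w|\geq\delta\}$, checks that $T*R = T*R_\delta$ on~$U_\delta = \{x:\mathrm{dist}(x,\spt T)>\delta\}$, and then invokes the general fact that the convolution of a distribution with a smooth compactly supported function is smooth. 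Your approach is more direct and explicit: by representing each~$T_i$ as $\vec{T}_i\,\mu_i$ and applying Fubini you obtain the concrete density $V_i(z) = \int R(z-x)\wedge\vec{T}_i(x)\,\d\mu_i(x)$, from which both the $L^1$-bound (Tonelli) and the smoothness (differentiation under the integral sign, since the compact set $\{\delta\leq|w|\}\cap\spt R$ keeps all derivatives of~$R$ bounded) follow immediately. Your route has the advantage of yielding an explicit formula for the density; the paper's route avoids the bookkeeping of the Fubini step and the domination check, and its smoothness argument via~$R_\delta$ is a clean trick worth knowing.
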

\begin{proof}
 As we have seen in~\eqref{massT*R}, the current~$T*R$
 has finite mass. By Riesz's representation theorem,
 it follows that~$T*R$ can be identified with a (regular, vector-valued) 
 Borel measure. We claim that~$T*R$ is absolutely 
 continuous with respect to the Lebesgue measure;
 then, the Radon-Nikodym theorem will imply that~$T*R$
 is carried by an integrable $\Lambda_{m+1, \R^k}(\R^d)$-field.
 Given a Borel set~$F\subseteq\R^d$, we will denote by
 $T*R\mres F$ the restriction of~$T*R$ to~$F$, which is well-defined 
 in the measure-theoretical sense, thanks to Riesz's 
 representation theorem.
 Let~$\varepsilon > 0$ be a small number.
 As~$R$ is integrable, there exists~$\delta > 0$ such that
 \begin{equation} \label{convsm1}
  \int_{E} \abs{R(x)} \, \d x \leq \varepsilon
  \qquad \textrm{for any measurable set } E\subseteq\R^d 
  \textrm{ such that } \abs{E}\leq \delta. 
 \end{equation}
 Let~$F\subseteq\R^d$ be a Borel set such that~$\abs{F} = 0$.
 As the Lebesgue measure is regular, there exists
 an open set~$U\supseteq F$ such that~$\abs{U} \leq \delta$.
 Let~$\omega\in C^\infty_{\mathrm{c}}(\R^d; \, \Lambda^{m+1}_{\R^k}(\R^d))$
 be a test form such that~$\spt\omega\subseteq U$
 and~$\norm{\omega}_{\comass, p} \leq 1$.
 Then, for any index~$i\in\{1, \, \ldots, \, k\}$
 and any~$x\in\R^d$, we have
 \[
  \abs{\bar{R}*\omega_i(x)}_{\comass}
  \leq \int_{\R^d} \abs{\omega_i(x - y)}_{\comass} \, \abs{R(-y)} \, \d y
  \leq \int_{-x - U} \abs{R(z)} \, \d z 
  \stackrel{\eqref{convsm1}}{\leq} \varepsilon
 \]
 (where the notation~$\abs{\cdot}_{\comass}$ stands for the classical 
 comass norm of a form). By reasoning as in~\eqref{massT*R},
 we deduce that 
 \begin{equation} \label{massT*R-added}
  \abs{T*R(\omega)}\leq k\sqrt{k}\,\varepsilon \, \mathbb{M}_p(T).
 \end{equation}
 Now, for any positive integer~$j$, let
 \[
  F_j := \left\{x\in F\colon\mathrm{dist}(x, \, \partial U)
   \geq \frac{1}{j} \right\}
 \]
 (where~$\mathrm{dist}(x, \, \partial U)$
 denotes the Euclidean distance of~$x$ from~$\partial U$)
 and let~$\varphi_j\in C^\infty_{\mathrm{c}}(U)$
 be a smooth cut-off function, such that~$\varphi_j = 1$ in~$F_j$
 and~$0 \leq \varphi_j \leq 1$ in~$U$. Given any form~$\widetilde{\omega}\in C^\infty_{\mathrm{c}}(\R^d; \, \Lambda^{m+1}_{\R^k}(\R^d))$
 such that~$\norm{\widetilde{\omega}}_{\comass, p} \leq 1$, the inequality~\eqref{massT*R-added} 
 (applied to~$\omega := \varphi_j\,\widetilde{\omega}$) implies
 \[
  \abs{(T*R\mres F_j)(\widetilde{\omega})}
  = \abs{(T*R\mres F_j)(\varphi_j\,\widetilde{\omega})}
  \leq k \sqrt{k} \, \varepsilon \, \mathbb{M}_p(T) .
 \]
 By taking the supremum over~$\widetilde{\omega}$, we obtain
 $\mathbb{M}_p(T*R\mres F_j)\leq k\sqrt{k} \, \varepsilon \, \mathbb{M}_p(T)$
 and, sending~$j$ to infinity,
 $\mathbb{M}_p(T*R\mres F)\leq k\sqrt{k} \, \varepsilon \, \mathbb{M}_p(T)$.
 As~$\varepsilon > 0$ is arbitrary,
 it follows that~$\mathbb{M}_p(T*R\mres F) = 0$
 and hence, $T*R$ is absolutely continuous
 with respect to the Lebesgue measure.
 Therefore, $T*R$ is carried by an integrable vector field,
 which we still denote by~$T*R$.
 
 Suppose now that~$R$ is smooth in~$\R^d\setminus\{0\}$.
 We claim that~$T*R$ is smooth in~$\R^d\setminus\spt T$.
 Let~$\delta > 0$, and let~$U_\delta := \{x\in\R^d\colon 
 \mathrm{dist}(x, \, \spt T) > \delta\}$.
 Let~$R_\delta$ be a vector field that is smooth
 everywhere in~$\R^d$ and coincides with~$R$ 
 in~$\R^d\setminus\mathbb{B}^d_\delta(0)$. 
 Finally, let~$\omega\in C^\infty_{\mathrm{c}}(\R^d, \, \Lambda^{m+1}_{\R^k}(\R^d))$ be a test form such that~$\spt\omega \subseteq U_\delta$.
 Then, for any index~$i\in\{1, \, \ldots, \, k\}$
 and any~$x\in\spt T$, we have
 \[
  \bar{R}*\omega_i(x)
  = \int_{x - U_\delta} \omega_i(x - y)\llcorner R(-y) \, \d y
  = \int_{x - U_\delta} \omega_i(x - y)\llcorner R_\delta(-y) \, \d y
  = \bar{R_\delta}*\omega_i(x),
 \]
 because~$x - U_\delta \subseteq\R^d\setminus\mathbb{B}^d_{\delta}(0)$.
 It follows that~$T*R (\omega) = T*R_\delta(\omega)$
 for any test form~$\omega$ with~$\spt\omega\subseteq U_\delta$
 and hence, $T * R = T * R_\delta$ in~$U_\delta$.
 As the convolution between a distribution and
 a smooth function (of compact support)
 is a smooth function, we deduce that~$T * R$
 is smooth in~$U_\delta$ for any~$\delta > 0$. 
 The lemma follows.
\end{proof}

\subsection{The distributional Jacobian}
We introduce the notion of distributional Jacobian for torus-valued maps. Let us recall first this notion in case of $\mathbb{S}^{1}$-valued maps. Let $u=(u^1,u^2) \in W^{1,1}_{\rm loc}(\R^{d}; \mathbb{S}^{1})$, we define the pre-jacobian $1$-form as
\begin{equation}\label{preJac}
j(u) :=u^1 \, \d u^2 - u^2 \, \d u^1
\end{equation}
and the distributional Jacobian as the $2$-form
\begin{equation} \label{Jac}
J(u) :=\frac{1}{2} \d j(u),
\end{equation}
where the differential is taken in the sense of distributions on $\R^d$. Moreover, we associate each $u \in W^{1,1}_{\rm loc}(\R^{d}; \mathbb{S}^{1})$ a $(d-1)$-dimensional (respectively, $(d-2)$-dimensional) classical current by $\star j(u)$ (respectively, $\star J(u)$), where $\star$ is the Hodge-star operator, as defined in~\eqref{Hodge}.

Now, for each torus-valued map $\u=(u_{1},\ldots,u_{k})\in W^{1,1}_{\rm loc}(\R^{d}; \mathbb{T}^{k})$ i.e. each $u_{i}\in W^{1,1}_{\rm loc}(\R^{d}; \mathbb{S}^{1})$, we define its pre-jacobian ${\bf j}\u$ and Jacobian ${\bf J}\u$ component-wise, i.e.
\begin{equation}
{\bf j}\u:=(j(u_{1}),\ldots,j(u_{k})),
\end{equation}
and
\begin{equation}
{\bf J}\u:=(J(u_{1}),\ldots,J(u_{k})).
\end{equation}
Moreover, we can associate to each $\u=(u_{1},\ldots,u_{k})\in W^{1,1}_{\rm loc}(\R^{d}; \mathbb{T}^k)$ a $(d-1)$-dimensional-$\R^k$ current, given by
\begin{equation}
\star\textbf{j}\u=(\star j(u_{1}),\ldots,\star j(u_{k})),
\end{equation}
and a $(d-2)$-dimensional-$\R^k$ current, given by
\begin{equation}
\star\mathbf{J}\u=(\star J(u_{1}),\ldots,\star J(u_{k})).
\end{equation}

\begin{remark}\label{compatibility}
 Let~$S$ be a~$\mathbb{Z}^k$-integral flat boundary.
 Let~$\mathcal{Q}_S$ be the set of 
 maps~$\u\in W^{1,1}_{\mathrm{loc}}(\R^d, \, \C^k)$
 that are constant in a neighbourhood of infinity
 and satisfy~$\star\mathbf{J}\u = \pi S$.
 For any $\u \in \mathcal{Q}_{S}$,
 we have
 \begin{align}
  \partial(\star\mathbf{j}\u) &= 2\pi \, S
 \end{align}
 thanks to~\eqref{partial-d}.
\end{remark}

\section{Energy-minimizing maps with prescribed singularities and mass minimization for normal currents}
\label{section3}

In this section, we focus on the proof of Theorem~\ref{A}.
Throughout this section, we always assume that~$S$ is a
$(d-2)$-dimensional integral flat boundary~$S$ 
with compact support and finite mass that satisfies the condition~\eqref{hp:H}. 
For the reader's convenience, we recall the functionals
we consider. Let~$\mathcal{Q}_S$ be defined as 
in Remark~\ref{compatibility}.
For~$\u=(u_{1},\ldots,u_{k})\in\mathcal{Q}_S$ and any~$p\in [1, \, \infty)$, 
we define
\begin{equation}
H_{p}(\u):=\int_{\R^{d}} (|\nabla u_1|^p+|\nabla u_2|^p+\ldots+|\nabla u_k|^p)^{\frac{1}{p}}\,\d x,
\end{equation}
while for~$p=\infty$
\begin{equation}
H_{\infty}(\u):=\int_{\R^{d}} \max\left(|\nabla u_1|, \, |\nabla u_2|, \, \ldots, \, |\nabla u_k|\right)\,\d x.
\end{equation}
We set
\begin{equation}\label{PrbH} 
\mathbb{H}_{p}(S):=\inf \left\{H_{p}(\u)\colon
\u=(u_{1},\ldots,u_{k})\in \mathcal{Q}_{S} \right\} \!.
\end{equation}
The second functional we consider
is defined in terms of the so-called nuclear norm of the gradient.
For~$p\in [1,\,\infty]$, the $p$-\textit{nuclear norm} of a 
linear map $A\colon\R^d \to \C^k$ 
is defined as follows:
\begin{equation}\label{normonthegradient}
\abs{A}_{\nucl,p} :=
\inf \left\{ \sum_{i=1}^l \|z_i\|_{p} \, |v_i|\colon A=\sum_{i=1}^l z_i \otimes v_{i}, \, z_i \in \C^k, \, v_{i} \mbox{ is a 1-covector in } \R^d \right\} \! .
\end{equation}
\begin{remark} \label{rk:nuclear}
 It may occasionally be convenient to
 consider~$A$ as a linear map~$\R^d\to\R^{2k}$, 
 by identifying~$\C^k$ with~$\R^{2k}$, in the usual way. However,
 in the right-hand side of~\eqref{normonthegradient},
 $\|\cdot\|_p$ denotes the~$\ell^p$-norm in~$\C^k$,
 \emph{not}~$\R^{2k}$. In other words, given a 
 vector~$\zeta = (\zeta_1, \, \ldots, \, \zeta_k)\in\C^k$
 and~$p\in [1, \infty)$, we define~$\norm{\zeta}_p
 := \left(\sum_{j=1}^k \abs{\zeta_j}^p\right)^{1/p}$, 
 where~$\abs{\zeta_j}$ is the Euclidean norm of the $j$-th 
 component~$\zeta_j\in\C\simeq\R^2$. Similarly, 
 we define~$\norm{\zeta}_\infty := \max_{1\leq j\leq k}\abs{\zeta_j}$.
\end{remark}
The terminology `nuclear norm'
refers to the notion of nuclear linear operators
between topological vector spaces,
which has been introduced by Grothendieck~\cite{Gron1955}
in much greater generality. Here, of course,
the setting is much simpler because we only need to
consider linear operators between the finite-dimensional
Banach spaces~$(\R^d, \, \abs{\cdot})$ and~$(\C^k, \, \norm{\cdot}_p)$.
For a detailed discussion on the nuclear norm of operators between
Banach spaces, see e.g.~\cite{Jameson}. 
 
\begin{remark} \label{rk:nuclear-info}
 The choice of the nuclear norm
 can be explained by the fact that the nuclear norm~$\abs{\cdot}_{\nucl,p}$
 is dual to the standard operator norm on the space of
 linear maps $(\C^k, \, \norm{\cdot}_p)\to(\R^d, \, \abs{\cdot})$
 (see e.g.~\cite[Proposition~1.11]{Jameson}).
 Heuristically, just as the operator norm `is analogous'
 to the comass norm on~$\Lambda^1_{\R^k}(\R^d)$,
 so does the nuclear norm `correspond' to the mass norm 
 on~$\Lambda_{1,\R^k}(\R^d)$.
 (For a precise statement, see 
 Lemma~\ref{equivalentmassnormandgradient} below.)
 In case~$p=2$, the nuclear norm reduces 
 to the so-called trace norm or $1$-Schatten norm:
 for any linear map~$A\colon \R^d\to\C^k$ we have
 \[
  \abs{A}_{\nucl, 2} = \mathrm{trace}\left(\,(A^*\,A)^{1/2}\right)
  = \sum_{i=1}^{s} \abs{\lambda_i}
 \]
 where~$A^*$ is the adjoint of~$A$, $s:= \min(d, \, 2k)$
 and~$\lambda_1, \, \ldots, \, \lambda_s$
 are the singular values of~$A$.
\end{remark}

We set
\begin{equation}
E_{p}(\u):=\int_{\R^{d}} |\nabla \u (x)|_{\nucl,p}\,\d x,
\end{equation}
where $\u=(u_{1},\ldots,u_{k})\in \mathcal{Q}_{S}$, 
and
\begin{equation}\label{PrbE}
\mathbb{E}_{p}(S):=\inf \left\{ E_{p}(\u)\colon
\u=(u_{1},\ldots,u_{k})\in \mathcal{Q}_{S} \right\} \! .
\end{equation}
\begin{remark}
 When~$k=1$, the problems~\eqref{PrbH} and \eqref{PrbE} reduce to the one studied in~\cite{BrezisCoronLieb, Brezis2021}.
 When $d=2$, our energy $E_{p}$ belongs to the class of energies investigated in~\cite{BaLe}, where the authors make a connection with branched optimal transport theory. Therefore, in the planar case we provide information
 on the relationship between energy minimizing torus-valued maps with prescribed topological points singularities and the branched optimal transport problem or Gilbert-Steiner problem between those points.
 This is in the spirit of the work by Brezis, Coron and Lieb~\cite{BrezisCoronLieb}, which provides the connection between 
 the energy of harmonic maps and the optimal transport Monge-Kantorovich problem.
\end{remark}
We start with some auxiliary results. 

\begin{lemma} \label{lemma:integer}
 Let~$R$ be a \emph{classical} integer-multiplicity
 flat boundary in~$\R^d$, of dimension $(d-2)$, with compact support.
 Let~$T$ be a classical current of dimension~$(d-1)$,
 of finite mass, such that~$\partial T = R$.
 Assume that~$T$ is represented by an integrable vector field
 (still denoted~$T$) and 
 that~$T\in C^\infty(\R^d\setminus\spt R, \, \Lambda_{d-1}(\R^d))$.
 Let~$\gamma\colon\mathbb{S}^1\to\R^d$ be a Lipschitz loop,
 such that $\gamma(\mathbb{S}^1)\subseteq\R^d\setminus\spt R$.
 Then, the integral
 \[
  \int_{\gamma(\mathbb{S}^1)} \star T 
 \]
 is an integer number.
\end{lemma}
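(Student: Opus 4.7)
The plan is to identify $\int_\gamma \star T$ with a signed, multiplicity-weighted intersection count of a 2-chain with $R$; this will be an integer since $R$ has integer coefficients.

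First, invoking \eqref{partial-d-bis} yields $\d(\star T) = -\star R$ as a distribution on $\R^d$. Since $\star R$ is supported on $\spt R$ and $T \in C^\infty(\R^d\setminus\spt R)$, the form $\star T$ is smooth and closed on $\R^d\setminus\spt R$. Hence $\int_\gamma \star T$ depends only on the free homotopy class of $\gamma$ in $\R^d\setminus\spt R$, by Stokes applied to the cobordism swept out by any Lipschitz homotopy. Since $\R^d$ is simply connected, $\gamma$ bounds a Lipschitz 2-chain $\Sigma$ in $\R^d$ (for instance, the cone over $\gamma$ with apex at a convenient point). Because $R$ is integer multiplicity flat, $\spt R$ is countably $\mathcal{H}^{d-2}$-rectifiable with approximate tangent $(d-2)$-planes $\mathcal{H}^{d-2}$-a.e.; a standard Sard/transversality argument then permits us to perturb $\Sigma$ (keeping $\partial\Sigma=\gamma$) so that $\Sigma$ meets $\spt R$ transversely in finitely many regular points $p_1,\dots,p_N$, at each of which $R$ has multiplicity $m_i\in\Z$ and the crossing has a sign $\epsilon_i\in\{\pm 1\}$.

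Applying classical Stokes to $\Sigma\setminus\bigcup_i B_\delta(p_i)$ --- a smooth 2-surface on which $\d(\star T) = 0$ and $\star T$ is smooth --- yields
\[
\int_\gamma \star T \;=\; \sum_{i=1}^N \int_{\sigma_i^\delta} \star T, \qquad \sigma_i^\delta := \Sigma \cap \partial B_\delta(p_i),
\]
for every small $\delta>0$. It thus suffices to prove $\lim_{\delta\to 0}\int_{\sigma_i^\delta}\star T = \epsilon_i m_i$ for each $i$. I would establish this through Federer's slicing theorem: slicing $T$ by the tangent 2-plane $\pi_i$ of $\Sigma$ at $p_i$ produces, for a.e.\ nearby parallel slice, a classical 1-current of finite mass on a 2-disk, smooth off a single point and whose boundary is $\epsilon_i m_i$ times a Dirac mass there (the slice of the $(d-2)$-current $R$ by the transverse $(d-2)$-codimensional 2-plane $\pi_i$ being $\epsilon_i m_i\delta_{p_i}$). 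A two-dimensional Stokes computation on this slice, via $\d(\star(\text{slice})) = -\star(\partial(\text{slice})) = -\epsilon_i m_i\,\delta_{p_i}\,\d x\wedge \d y$, then forces the circulation of $\star T$ around a small loop on the slice to equal $\epsilon_i m_i$. Summing yields $\int_\gamma \star T = \sum_i \epsilon_i m_i \in \Z$.

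The delicate point is this final local contribution: we have no pointwise regularity of $T$ near $\spt R$ beyond integrability and finite mass, so the integer multiplicity of $R$ cannot be read off directly from $T$. The slicing argument is precisely what converts the distributional identity $\partial T = R$ into a two-dimensional problem in which integrality of $R$ appears explicitly as the boundary of a 1-current on a disk.
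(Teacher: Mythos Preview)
Your overall plan --- bound $\gamma$ by a $2$-chain, excise small neighborhoods of the intersection points with $R$, apply Stokes, and identify the local contributions as integer multiplicities --- is exactly the paper's strategy. The gap is in the transversality step.

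You write that ``$\spt R$ is countably $\mathcal{H}^{d-2}$-rectifiable with approximate tangent $(d-2)$-planes $\mathcal{H}^{d-2}$-a.e.'' and invoke a ``standard Sard/transversality argument'' to perturb $\Sigma$ into generic position. But the hypothesis is only that $R$ is an integer-multiplicity \emph{flat boundary}: $R=\partial T'$ for some rectifiable $T'$. Nothing guarantees that $R$ itself is rectifiable (no finite-mass assumption is made on $R$ in the statement), and even when $R$ is rectifiable, its topological support $\spt R$ can be strictly larger than its rectifiable carrier --- possibly of positive $\mathcal{H}^{d-1}$-measure. There is no Sard-type transversality theorem that lets you perturb a Lipschitz $2$-chain to meet such a set in finitely many regular points while keeping its boundary fixed on $\gamma$. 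The slicing step you sketch at the end inherits the same problem: slicing $R$ by a $2$-plane only gives a well-defined $0$-current for a.e.\ slice when $R$ has finite mass, and even then the intersection with a fixed $\Sigma$ (rather than a generic translate) is uncontrolled.

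The paper resolves this by first reducing to the case where $R$ is \emph{polyhedral}: using Federer's polyhedral approximation theorem (\cite[\S4.2.20, 4.2.22]{FeBook}) and the deformation theorem (\cite[\S4.2.9]{FeBook}), one replaces $T$ by a current $\tilde{T}$ that agrees with $T$ in a neighborhood of $\gamma(\mathbb{S}^1)$ and whose boundary is polyhedral. Similarly, $\gamma$ is replaced by the boundary of a $2$-polyhedron $\Sigma$. Transversality of two polyhedral complexes is then genuinely elementary (a small affine perturbation suffices), and the intersection is automatically a finite set. The local contribution is computed not via slicing but by mollifying, $\int_{\partial\Sigma}\star(T*\rho_\epsilon)=-\int_\Sigma\star(R*\rho_\epsilon)$, and passing to the limit $\epsilon\to 0$ directly; the right-hand side converges to the signed intersection number because $R$ is polyhedral. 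Your argument would become correct if you inserted this polyhedral reduction before attempting transversality.
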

\begin{proof}
 The Hodge dual~$\star T$ is a differential $1$-form
 and is smooth away from the support of~$R$, so the integral
 of~$\star T$ on~$\gamma(\mathbb{S}^1)$ is well-defined.
 In order to prove that the integral is an integer number,
 we can assume without loss of generality that~$R$ is a 
 polyhedral current. Indeed, let~$\epsilon > 0$ be a small number.
 Even if~$R$ is not polyhedral, by Federer's polyhedral
 approximation theorem~\cite[\S~4.2.20 and~4.2.22]{FeBook}
 there exist an integer-multiplicity polyhedral 
 current~$P_1$ and finite-mass, integer-multiplicity
 currents~$Q_1$, $Q_1^\prime$, all of them
 supported in an~$\epsilon$-neighbourhood
 of~$\spt R$, such that~$R - P_1 = Q_1 + \partial Q_1^\prime$.
 As~$\partial R = 0$, we have~$\partial Q_1 = - \partial P_1$
 and in particular, the boundary of~$Q_1$ is polyhedral.
 Therefore, by Federer and Fleming's deformation theorem~\cite[\S~4.2.9]{FeBook},
 we can write~$Q_1 = P_2 + \partial Q_2$, where~$P_2$, $Q_2$
 are integer-multiplicity currents of finite mass supported
 in an~$\epsilon$-neighbourhood of~$\spt Q_1$
 and~$P_2$ is polyhedral. Let~$\tilde{T} := T - Q_1^\prime - Q_2$.
 Then, $\partial \tilde{T} = R - \partial (Q_1^\prime + Q_2) = P_1 + P_2$
 is polyhedral and, since we have assumed that
 $\gamma(\mathbb{S}^1)$ is a compact subset of~$\R^d\setminus\spt R$,
 we can take~$\epsilon$ small enough so that~$\tilde{T} = T$
 in a neighbourhood of~$\gamma(\mathbb{S}^1)$. Therefore,
 up to replacing~$T$ with~$\tilde{T}$, there is no loss of generality
 in assuming that~$\partial T = R$ is polyhedral.
 Moreover, it is not restrictive to assume that~$\gamma(\mathbb{S}^1)$
 is the boundary of a $2$-dimensional polyhedron~$\Sigma$.
 Indeed, the form~$\star T$ is closed in the complement
 of~$\spt R$ (due to Equation~\eqref{partial-d-bis})
 and~$\gamma(\mathbb{S}^1)$ can be approximated
 by polyhedral boundaries, each of which is a finite sum
 of boundaries of individual polyhedra.
 Finally, up to a small perturbation, we can 
 take~$\Sigma$ transverse to each simplex in~$R$.
 Then, for each $(d - 2)$-dimensional simplex~$K$ in~$R$,
 the intersection~$K\cap \Sigma$ contains at most one point.
 
 Let us assign an orientation to each $(d-2)$-simplex~$K$
 in~$R$, by considering a unit $(d-2)$-vector~$\tau_K$ 
 that spans the plan of~$K$.
 Let~$\tau_\Sigma$ be a unit $2$-vector
 that spans the plan of~$\Sigma$. 
 We define the intersection number~$I(K, \, \Sigma)$
 as follows: $I(K, \, \Sigma) := 0$ if~$K\cap\Sigma$ is empty;
 $I(K, \, \Sigma) := 1$ if~$K\cap\Sigma$ is non-empty
 and~$\star(\tau_K\wedge\tau_\Sigma) > 0$;
 and~$I(K, \, \Sigma) := -1$ otherwise. 
 Let~$\theta(K)\in\mathbb{Z}$ be the multiplicity
 of~$R$ along the (oriented) simplex~$K$. We claim that
 \begin{equation} \label{integer1}
  \int_{\partial\Sigma} \star T =
  \sum_{K} \theta(K) \, I(K, \, \Sigma),
 \end{equation}
 where the sum is taken over all $(d-2)$-simplices~$K$ of~$R$.
 Indeed, let~$\epsilon > 0$ be a small parameter
 and let~$\rho_\epsilon$ be a mollifier, as in~\eqref{mollifier}.
 Stokes' theorem implies
 \begin{equation*}
  \begin{split}
   \int_{\partial\Sigma} \star (T*\rho_\epsilon) 
   &= \int_{\Sigma} \d(\star (T*\rho_\epsilon))
    \stackrel{\eqref{partial-d-bis}}{=}
   - \int_{\Sigma} \star \partial (T*\rho_\epsilon)
  \end{split}
 \end{equation*}
 and hence, by Lemma~\ref{convolutionmollifier},
 \begin{equation} \label{integer2}
  \begin{split}
   \int_{\partial\Sigma} \star (T*\rho_\epsilon) 
   &= - \int_{\Sigma} \star (R*\rho_\epsilon).
  \end{split}
 \end{equation}
 The right-hand side of~\eqref{integer2} can be 
 further evaluated by writing the convolution~$R*\rho_\epsilon$
 as an integral (which is possible, because~$R$
 is polyhedral). Then, \eqref{integer1} follows 
 by taking the limit as~$\epsilon\to 0$
 in both sides of~\eqref{integer2}, 
 and~\eqref{integer1} implies the conclusion of the lemma.
\end{proof}
\begin{lemma}\label{mainlemma}
Let $T=(T_1,\ldots,T_k)$ be an $\R^k$-normal current of dimension $(d-1)$ and having compact support, such that $\partial T=S$.
Let~$p\in [1, \, \infty]$ be fixed.
Then, the following statements hold.
\begin{itemize}
	\item[(i)] For any $\eta >0$, there exists an $\R^k$-normal current of dimension $(d-1)$, $\bar{T}=(\bar{T}_{1},\ldots,\bar{T}_{k})$, such that $\bar{T}_{i}\in  C^{\infty}\left(\R^{d}\setminus \spt S_i;\Lambda_{d-1}(\R^{d})\right)$ 
	for any index~$i$, $\partial \bar{T}=S$ and
	$\mathbb{M}_p(\bar{T})\leq \mathbb{M}_p(T)+\eta$.
	\item[(ii)] Moreover,
	there exists $\u\in \mathcal{Q}_{S}$ such that 
	\begin{equation} \label{mainlemma0}
	\frac{1}{2\pi}\star {\bf j}(\u) = \bar{T}.
	\end{equation} 
\end{itemize}
\end{lemma}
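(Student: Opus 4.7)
The plan is to address~(i) and~(ii) in turn, producing the smooth representative $\bar T$ first and then exponentiating it to obtain $\u$.

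For~(i), the strategy is to combine the two convolution techniques from Section~\ref{section2}. Mollifying $T$ against the radial kernel~\eqref{mollifier} yields the smooth current $T * \rho_\epsilon$, but with the wrong boundary $S * \rho_\epsilon$. To correct the boundary I will convolve $S$ with a vector field $R_\epsilon\in L^1(\R^d,\R^d)$ that I construct explicitly with the following properties: compactly supported in $\bar{\mathbb{B}}^d_\epsilon(0)$, smooth on $\R^d\setminus\{0\}$, with $\div R_\epsilon = \delta_0 - \rho_\epsilon$ distributionally, and $\|R_\epsilon\|_{L^1}\leq\epsilon$. An explicit radial formula is
\[
R_\epsilon(x) := \frac{x}{\alpha_d\,|x|^d}\bigl(1-P_\epsilon(|x|)\bigr),
\qquad
P_\epsilon(r) := \int_{\mathbb{B}^d_r(0)}\rho_\epsilon(y)\,\d y,
\]
whose divergence identity and $L^1$-bound follow from a short computation in spherical coordinates. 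I then set $\bar T := T * \rho_\epsilon - S * R_\epsilon$. By~\eqref{convolutionvector2} applied to $S$ (dimension $d-2$) together with $\partial S = 0$, one gets $\partial(S * R_\epsilon) = -S + S*\rho_\epsilon$, hence $\partial \bar T = S$. Since $T*\rho_\epsilon$ is smooth everywhere and, component-wise, $S_i * R_\epsilon$ is smooth on $\R^d\setminus\spt S_i$ by Lemma~\ref{lemma:smooth-convolution}, we obtain $\bar T_i\in C^\infty(\R^d\setminus\spt S_i;\Lambda_{d-1}(\R^d))$. The mass estimate follows from Lemma~\ref{convolutionmollifier}(iii) (yielding $\mathbb{M}_p(T*\rho_\epsilon)\leq\mathbb{M}_p(T)$) and from~\eqref{massconvolution-vector} (yielding $\mathbb{M}_p(S*R_\epsilon)\leq k\sqrt{k}\,\mathbb{M}_p(S)\,\epsilon$); choosing $\epsilon$ small enough gives $\mathbb{M}_p(\bar T)\leq\mathbb{M}_p(T)+\eta$.

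For~(ii), I will construct each $u_i$ by integrating an appropriate $1$-form. Fixing the sign dictated by the Hodge conventions of Section~\ref{section2} so that $\star\alpha_i = 2\pi\,\bar T_i$, I set $\alpha_i := \pm 2\pi\,\star\bar T_i$. Since $\bar T_i$ is smooth on $\R^d\setminus\spt S_i$, so is $\alpha_i$, and the identity~\eqref{partial-d-bis} gives $\d\alpha_i = \pm 2\pi\,\star S_i$, which vanishes on $\R^d\setminus\spt S_i$; hence $\alpha_i$ is closed there. The crucial input is integrality of periods: for every Lipschitz loop $\gamma\colon\mathbb{S}^1\to\R^d\setminus\spt S_i$, Lemma~\ref{lemma:integer} (applied with $R=S_i$, $T=\bar T_i$) yields $\int_{\gamma(\mathbb{S}^1)}\star\bar T_i\in\Z$, hence $\int_{\gamma(\mathbb{S}^1)}\alpha_i\in 2\pi\Z$. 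Since~\eqref{hp:H} guarantees that $\R^d\setminus\spt S_i$ is path-connected, I fix a basepoint $x_0^i\in\R^d\setminus\spt S_i$ and define a phase $\phi_i\colon\R^d\setminus\spt S_i\to\R/2\pi\Z$ by path-integration of $\alpha_i$; then I set $u_i := e^{i\phi_i}$ on $\R^d\setminus\spt S_i$ and extend $u_i := 1$ on the negligible set $\spt S_i$.

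It remains to verify that $\u=(u_1,\ldots,u_k)\in\mathcal{Q}_S$ and that $\frac{1}{2\pi}\star\mathbf{j}\u = \bar T$. Since $\bar T$ has compact support, $\alpha_i$ vanishes outside some large ball $\mathbb{B}^d_R(0)$; as $\R^d\setminus\bar{\mathbb{B}}^d_R(0)$ is connected for $d\geq 2$, $\phi_i$ (hence $u_i$) is constant there. The identity $|\nabla u_i|=|\alpha_i|$ a.e., together with $\alpha_i\in L^1(\R^d)$ (which holds as $\bar T$ has finite mass) and $\mathcal{H}^{d-1}(\spt S_i)=0$ from~\eqref{hp:H}, ensures $u_i\in W^{1,1}_{\mathrm{loc}}(\R^d;\mathbb{S}^1)$. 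By construction $j(u_i)=\d\phi_i=\alpha_i$, whence $\star j(u_i)=\star\alpha_i=2\pi\bar T_i$, which is~\eqref{mainlemma0}. Finally, $\star J(u_i)=\tfrac{1}{2}\star\d j(u_i)=\pi S_i$ follows either from direct computation via~\eqref{partial-d-bis}, or equivalently from Remark~\ref{compatibility} applied to $\star j(u_i)=2\pi\bar T_i$ (which has boundary $2\pi S_i$), so $\u\in\mathcal{Q}_S$. The main technical obstacle is Lemma~\ref{lemma:integer}: without the integrality of periods, $\alpha_i$ would only admit a real-valued multivalued primitive, and the passage from a real-coefficient normal current to a genuine $\mathbb{S}^1$-valued map would break down. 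The second delicate ingredient is the construction of the vector field $R_\epsilon$ with the simultaneous requirements of compact support, $L^1$-smallness, and prescribed divergence, which is what makes the correction~$S * R_\epsilon$ both exact (giving $\partial\bar T = S$) and negligible in mass.
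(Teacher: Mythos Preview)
Your proposal is correct and follows essentially the same route as the paper's proof. For~(i), your explicit vector field $R_\epsilon$ coincides (after unwinding the radial integral) with the construction the paper relegates to Lemma~\ref{laplacian} in the appendix, and the boundary correction $\bar T := T*\rho_\epsilon - S*R_\epsilon$ is identical; for~(ii), your path-integration of the closed $1$-form $\pm 2\pi\,\star\bar T_i$ with integrality supplied by Lemma~\ref{lemma:integer}, followed by the removable-singularity argument via $\mathcal{H}^{d-1}(\spt S_i)=0$, matches the paper's construction exactly (the paper writes $u_i = e^{2\pi i\theta_i}$ with $\theta_i = (-1)^{d-1}\int_{\gamma_x^i}\star T_{\epsilon,i}$, which is your $e^{i\phi_i}$ in different normalisation). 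One cosmetic point: in the paper's notation the surface-area constant is $\alpha_{d-1}=\mathcal{H}^{d-1}(\mathbb{S}^{d-1})$, not $\alpha_d$.
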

\begin{proof}
\mbox{}

\medskip
\noindent
\textbf{Proof of $(i)$.}
We prove this fact by using the properties of the convolution, introduced in Section \ref{convolutionwithmollifiers}, \ref{convolutionwithvectorfunction}.
Let~$\rho_{\epsilon}$ be a symmetric mollifier kernel, exactly as in Section~\ref{convolutionwithmollifiers}. We consider the convolution~$T*\rho_{\epsilon}$. Each component of~$T*\rho_{\epsilon}$
is a classical current carried by a smooth $(d-1)$-vector-valued function with compact support in $\R^d$. Moreover, for~$\epsilon$
small enough, the mass of~$T*\rho_{\epsilon}$
is arbitrarily close to the mass of~$T$
(see Property~(iii) in Section~\ref{convolutionwithmollifiers}).
However, we cannot choose~$T_\epsilon = T*\rho_{\epsilon}$,
because the boundary of~$T*\rho_{\epsilon}$
is not necessarily equal to~$S$ --- in fact, Property~(ii)
in Section~\ref{convolutionwithmollifiers} gives
\begin{equation} \label{mainlemma1}
 \partial(T*\rho_\epsilon) = S *\rho_\epsilon.
\end{equation}
Therefore, we need to modify~$T*\rho_\epsilon$ in a suitable way,
so as to obtain a new current whose boundary is equal to~$S$.

By Lemma~\ref{laplacian} in the appendix, 
there exists a vector-valued map 
$R_{\epsilon}\in L^1(\R^d; \R^d)$ that is smooth in~$\R^d\setminus\{0\}$,
satisfies
\begin{equation}
\Div R_{\epsilon}=\delta_{0}-\rho_{\epsilon}
\end{equation}
and $\|R_{\epsilon}\|_{L^{1}(\R^d)} \to 0$, $\spt(R_{\epsilon})\subset \bar{\mathbb{B}}_{\epsilon}(0)$. 
We consider the convolution~$S * R_\epsilon$,
defined in Section~\ref{convolutionwithvectorfunction}.
This is a~$(d-1)$-dimensional normal $\R^k$-current,
and the~$i$-th component of~$S* R_\epsilon$ is carried by a vector field
that is smooth in the complement of~$\spt S_i$, 
by Lemma \ref{lemma:smooth-convolution}.
Moreover, \eqref{convolutionvector2} implies
\begin{equation} \label{mainlemma2}
\partial (S*R_{\epsilon})
= (-1)^{d-2}  R_{\epsilon}*\partial S- S*\Div R_{\epsilon}
= \rho_{\epsilon}*S-S.  
\end{equation}
Let $T_{\epsilon} := T*\rho_{\epsilon}-S*R_{\epsilon}$.
By~\eqref{mainlemma1} and~\eqref{mainlemma2},
we have $\partial T_{\epsilon}=S$.
Moreover, each component $T_{\epsilon}^i$ of~$T_\epsilon$
is carried by a vector-field in $C^{\infty}_{\mathrm{c}}\left(\R^{d}\setminus\spt S_i;\Lambda^{d-1}(\R^{d})\right)$ and, thanks to~\eqref{massconvolution1}, 
\eqref{massconvolution-vector}, we have
\begin{equation}
\begin{aligned}
\mathbb{M}_p(T_{\epsilon}) 
&\leq \mathbb{M}_p(T*\rho_{\epsilon}) + \mathbb{M}_p(S*R_{\epsilon}) \\
&\leq \mathbb{M}_p(T) + \|R_{\epsilon}\|_{L^1(\R^d)} \, \mathbb{M}_p(S) \\
&\leq \mathbb{M}_p(T) + \mathrm{o}(1) 
\end{aligned}
\end{equation}
as~$\epsilon \to 0$. Therefore, taking~$\epsilon$ small enough
(depending on~$\eta$), the current~$\bar{T} := T_\epsilon$
has all the required properties.

\medskip
\noindent
\textbf{Proof of $(ii)$}.
We recall that~$G_i$ is the subset of $W^{1,1}_{\rm loc}(\R^{d}; \mathbb{S}^{1})$ consisting of the maps~$u_{i}$ 
that are constant outside an open ball
of radius $r_i = r_{i}(u_i)$ 
and are such that $\star J(u_i)=\pi S_{i}$.
We shall prove that, for each $i=1,\ldots, k$, there exists $u_i\in G_i$ such that
\begin{equation} \label{mainlemma-jacu}
\frac{1}{2\pi}\star j(u_i)=T_{\epsilon, i}.
\end{equation}
Then, choosing~$\u = (u_1, \, \ldots, ,\ u_k)$,
the lemma will follow.
Let fix a point $x_0$ in $\R^d \setminus \spt S_i$.
The set~$\R^d\setminus\spt S_i$ is open and, by assumption,
connected. Therefore, for any $x\in \R^d \setminus \spt S_i$ 
there exists a path $\gamma^i_{x}\colon [0, \, 1]$,
whose support is contained in $\R^d \setminus \spt S_i$, 
that connects $x$ and $x_0$. Define
\begin{equation} \label{construction_u-i}
\theta_{i}(x):=(-1)^{d-1}\int_{\gamma^i_x}\star T_{\epsilon,i},
\end{equation}
then let $u_{i}(x):=e^{2\pi i \theta_{i}(x)}$.
The quantity~$u_i(x)$ is well-defined
and independent of the choice of~$\gamma_x^i$.
Indeed, if $\gamma_{1}, \gamma_{2}$ are two paths 
connecting $x$ and $x_0$, Lemma~\ref{lemma:integer} implies 
\begin{equation}
\int_{\gamma_{1}}\star T_{\epsilon,i}
-\int_{\gamma_{2}}\star T_{\epsilon,i} \in \mathbb{Z}.
\end{equation}

We now prove that the map $u_i\colon\R^d\setminus\spt S_i\to\mathbb{S}^1$ 
is smooth in $\R^d \setminus \spt S_i$. Take an arbitrary small open ball centered at $y_0$ with radius $r$, $\mathbb{B}^d_r(y_0)\subset\R^d \setminus \spt S_i$. 
Thanks to \eqref{partial-d}, one has 
\[
 (-1)^{d-1}\,\d(\star T_{\epsilon,i})
 = \partial T_{\epsilon, i} = S_i = 0
 \qquad \textrm{in } \mathbb{B}^d_r(y_0).
\]
Therefore, there exists a smooth function $\phi_i\colon\mathbb{B}^d_r(y_0) \, \to \R$ such that $\d\phi_i=(-1)^{d-1}\star T_{\epsilon,i}$. 
Up to an additive constant, we can assume that~$\varphi_i(y_0) = 0$.
For any $x\in \mathbb{B}^d_r(y_0)$, let $\sigma_x$ be the path connecting $y_0$ to $x$ along the radius of the ball. By definition of~$u_i$, one has
\begin{equation}
u_i(x) = u_i(y_0) \, e^{2\pi i\bar{\phi}_i(x)},
\end{equation}
where 
\begin{equation}
 \bar{\phi}_i(x)
 := (-1)^{d-1}\int_{\sigma^i_{x}}\star T_{\epsilon,i}
  = \int_{\sigma^i_{x}} \d\phi_i = \phi_i(x).
\end{equation}
Therefore, $u_i(x) =  u_i(y_0) \, e^{2\pi i\phi_i(x)}$.
This proves that~$u_i$ is smooth in~$\mathbb{B}^d_r(y_0)$,
because~$\phi_i$ is. Moreover,
by explicit computation, we obtain
\begin{equation}
j(u_i)=2\pi \d\phi_i=2\pi_i(-1)^{d-1}\star T_{\epsilon,i}
\end{equation}
and~\eqref{mainlemma-jacu} follows.
Due to the special structure of the target manifold~$\mathbb{S}^1$, 
we have, for each $x\in \R^d$,
\begin{equation}\label{equalityoperatornormcomssnorm}
\left|\star j(u_i)(x) \right|
=\left| j(u_i)(x) \right| = \left| \nabla u_i(x) \right|
\end{equation}
(we recall that the notation~$|\cdot|$, without any subscript, 
indicates the Euclidean norm; see Remark~\ref{remarknotation}). From~\eqref{mainlemma-jacu} and~\eqref{equalityoperatornormcomssnorm}, combined with the fact that $T_{\epsilon, i}$ has finite mass, 
it follows that $u_i \in W^{1,1}(\R^d\setminus\spt(S_i),\mathbb{S}^1)$.
On the other hand, a general property of Sobolev spaces
is that~$W^{1,1}(\R^d\setminus F) = W^{1,1}(\R^d)$
for any closed set~$F\subseteq\R^d$ such that~$\mathcal{H}^{d-1}(F) = 0$
(see e.g.~\cite[Theorem~1.2.5 p.~16]{MazyaPoborchi}).
Therefore, taking Assumption~\eqref{hp:H} into account, 
we deduce that $u_i\in W^{1,1}(\R^d,\mathbb{S}^1)$.
Moreover, \eqref{mainlemma-jacu} and~\eqref{partial-d} imply
\begin{equation}
 \star J(u_i) = \frac{1}{2} \star(\d j(u_i))
 = \frac{1}{2} \partial (\star j(u_i))
 = \pi \, \partial T_{\epsilon,i} = \pi S_i.
\end{equation}
Finally, since $T_{\epsilon,i}$ has compact support, 
it follows by construction that
$u_i$ is constant outside some open ball
that contains~$T_{\epsilon,i}$
(see Equation~\ref{construction_u-i}).
This shows that~$u_i\in G_i$ and completes the proof.
\end{proof}

\begin{remark}\label{nonempty}
We deduce from Lemma~\ref{mainlemma}
that, if~$S$ has finite mass and 
the assumption~\eqref{hp:H} is satisfied, then
the space $\mathcal{Q}_{S}$ is non-empty.
\end{remark}

Before we state our next lemma, 
we introduce a notation:
for~$\omega\in\Lambda^1_{\R^k}(\R^d)$, we define
\begin{equation} \label{barN-norm}
 \bar{N}(\omega):=\inf \left\{ \sum_{i=1}^l \|z_i\|^*_{q} \, |\omega_i| \colon \omega=\sum_{i=1}^l z_i \otimes \omega_{i}, \, z_i \in (\R^k)^*, \, \omega_{i} \mbox{ is a $1$-covector in } \R^d \right\} \! ,
\end{equation}
where~$q\in [1, \infty]$ is the conjugate exponent of~$p$,
i.e. $1/p + 1/q = 1$. (We recall that~$\norm{\cdot}^*_q$
denotes the dual of the~$q$-norm on~$\R^k$;
therefore, $\norm{\cdot}^*_q$ is the~$p$-norm on~$(\R^k)^*$.)
The right-hand side of~\eqref{barN-norm} is formally analogous
to the characterization of the mass norm of a vector,
given by Equation~\eqref{massnorm-char}; however, the function~$\bar{N}$
is defined on forms, not vectors.
Finally, we recall that the mass norm~$\abs{\cdot}_{\mass,p}$
for vectors is defined by~\eqref{def_massnorm}.

\begin{lemma}\label{equivalentmassnormandgradient}
Let $\u=(u_1,\ldots,u_k)\in \mathcal{Q}_{S}$. 
Then, for a.e.~$x\in \R^d$, there holds
\begin{equation}
  \abs{\nabla \u(x)}_{\nucl,p}
  = \bar{N}(\mathbf{j}\u(x))
  = \abs{\star \mathbf{j}\u(x)}_{\mass,p}.
\end{equation}
\end{lemma}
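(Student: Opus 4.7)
The plan is to prove the two equalities separately, relying in an essential way on the pointwise identity $\nabla u_l(x) = i\, u_l(x) \, j(u_l)(x)$ (valid for a.e.~$x$), which is the complex-notation analogue of~\eqref{equalityoperatornormcomssnorm} and follows from differentiating the constraint $|u_l|^2 = 1$. Throughout, the canonical identification $I\colon(\R^k)^* \to \R^k$ induced by the basis carries $\|\cdot\|^*_q$ to $\|\cdot\|_p$.

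First I would establish $\bar{N}(\mathbf{j}\u(x)) = |\star\mathbf{j}\u(x)|_{\mass,p}$. This is essentially a tautology: every $1$-covector in $\R^d$ is simple and the Hodge-star operator is a linear isometry $\Lambda^1(\R^d) \to \Lambda_{d-1}(\R^d)$, sending a $1$-covector $\omega_i$ to a simple $(d-1)$-vector $\star\omega_i$ with $|\star\omega_i| = |\omega_i|$. Hence decompositions $\mathbf{j}\u(x) = \sum_i z_i \otimes \omega_i$ with $z_i \in (\R^k)^*$ correspond bijectively to decompositions $\star\mathbf{j}\u(x) = \sum_i I(z_i) \otimes \star\omega_i$ with $I(z_i) \in \R^k$, and the two costs coincide. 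The claimed equality then follows from the characterization~\eqref{massnorm-char} of the mass norm.

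The second equality $|\nabla\u(x)|_{\nucl,p} = \bar{N}(\mathbf{j}\u(x))$ is more delicate. I would use the pointwise identity above to factor $\nabla\u(x) = \Phi_{\u(x)} \circ \mathbf{j}\u(x)$, where $\Phi_{\u(x)}\colon\R^k \to \C^k$ is the $\R$-linear map $(a_1,\ldots,a_k) \mapsto (iu_1(x)\,a_1, \ldots, iu_k(x)\,a_k)$. Since $|iu_l(x)| = 1$ for each $l$, the map $\Phi_{\u(x)}$ is a $\|\cdot\|_p$-isometry onto its range, for every $p \in [1,\infty]$. The inequality $|\nabla\u(x)|_{\nucl,p} \leq \bar{N}(\mathbf{j}\u(x))$ is then immediate: any decomposition $\mathbf{j}\u(x) = \sum_i z_i \otimes \omega_i$ yields $\nabla\u(x) = \sum_i \Phi_{\u(x)}(I(z_i)) \otimes \omega_i$ with $\|\Phi_{\u(x)}(I(z_i))\|_p = \|z_i\|^*_q$. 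For the reverse inequality, I would introduce $\Psi_{\u(x)}\colon\C^k \to \R^k$ defined componentwise by $\Psi_{\u(x)}(\zeta)_l := \mathrm{Im}\bigl(\overline{u_l(x)}\,\zeta_l\bigr)$. The trivial bound $|\mathrm{Im}(w)| \leq |w|$ for $w\in\C$ gives $|\Psi_{\u(x)}(\zeta)_l| \leq |\zeta_l|$ componentwise, so $\Psi_{\u(x)}$ is a contraction for every $\|\cdot\|_p$; a direct check gives $\Psi_{\u(x)} \circ \Phi_{\u(x)} = \mathrm{id}_{\R^k}$, whence $\mathbf{j}\u(x) = \Psi_{\u(x)} \circ \nabla\u(x)$ because the range of $\nabla\u(x)$ lies in the range of $\Phi_{\u(x)}$. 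Any decomposition $\nabla\u(x) = \sum_i \zeta_i \otimes \omega_i$ therefore yields $\mathbf{j}\u(x) = \sum_i \Psi_{\u(x)}(\zeta_i) \otimes \omega_i$ with $\|\Psi_{\u(x)}(\zeta_i)\|_p \leq \|\zeta_i\|_p$, proving $\bar{N}(\mathbf{j}\u(x)) \leq |\nabla\u(x)|_{\nucl,p}$.

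The hard part is the reverse inequality in the second step, namely producing a left inverse of the isometry $\Phi_{\u(x)}$ that is simultaneously a contraction for every $p \in [1,\infty]$. The componentwise construction of $\Psi_{\u(x)}$ through $\mathrm{Im}(\overline{u_l}\,\zeta_l)$ solves this because it decouples the norm bound across components and reduces it to the one-variable inequality $|\mathrm{Im}(w)| \leq |w|$, which holds with no dependence on the exponent $p$.
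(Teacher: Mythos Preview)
Your proof is correct and follows essentially the same approach as the paper. The paper's Step~1 (the inequality $\geq$) constructs, from a decomposition $\nabla\u(x)=\sum_h z_h\otimes v_h$, the coefficients $z_{\u,h}=(\langle iu_m(x),z_h^m\rangle)_m$ and bounds them via Cauchy--Schwarz; this is precisely your map $\Psi_{\u(x)}$, since $\langle iu_m,\zeta_m\rangle=\mathrm{Im}(\overline{u_m}\,\zeta_m)$. The paper's Step~2 (the inequality $\leq$) uses the identity $\nabla u_m=iu_m\otimes j(u_m)$ to send $z_h\mapsto\zeta_h=(iz_h^m u_m(x))_m$, which is your $\Phi_{\u(x)}$. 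Your packaging via the named maps $\Phi_{\u(x)}$, $\Psi_{\u(x)}$ and the left-inverse relation $\Psi_{\u(x)}\circ\Phi_{\u(x)}=\mathrm{id}$ is a cleaner way to organize the same computation; the one superfluous remark is the clause ``because the range of $\nabla\u(x)$ lies in the range of $\Phi_{\u(x)}$'', since $\mathbf{j}\u=\Psi_{\u(x)}\circ\nabla\u$ already follows from $\nabla\u=\Phi_{\u(x)}\circ\mathbf{j}\u$ and $\Psi_{\u(x)}\circ\Phi_{\u(x)}=\mathrm{id}$.
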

\begin{proof}
Let~$x\in\R^d$ be a differentiability point for~$u$.
As $\mathbf{j}\u(x)$ is an $(\R^k)^*$-valued $1$-covector,
we have $\bar{N}(\mathbf{j}\u(x))
= \abs{\star \mathbf{j}\u(x)}_{\mass,p}$ 
(for more details, see Lemma~\ref{characterizationmassnorm} and Remark~\ref{characterizationmasscodimension1} in the appendix).
We shall prove that $\abs{\nabla \u(x)}_{\nucl,p}
= \bar{N}(\mathbf{j}\u(x))$.

\medskip
\noindent
\textbf{Step 1.} We first prove that 
\begin{equation} \label{eqmass1}
 \abs{\nabla \u(x)}_{\nucl,p}
  \geq \bar{N}(\mathbf{j}\u(x)).
\end{equation}
Consider a decomposition of~$\nabla\u(x)$ of the form
\begin{equation}\label{representationgradient}
\nabla \u(x)=\sum_{h=1}^\ell z_h \otimes v_{h},
\end{equation}
where $z_h = (z_j^1, \, \ldots, \, z_j^k) \in \C^k\simeq\R^{2k}$
and 
$v_h$ is a $1$-covector in $\R^d$ for any~$h=1,\ldots,\ell$. 
Let~$m\in\{1, \, \ldots, k\}$ be fixed.
We identify~$\nabla\u(x)$ with a real~$(2k)\times d$-matrix
and multiply on the left both sides of the equality~\eqref{representationgradient} by 
the (column) vector $(0,\ldots, iu_m(x), \ldots, 0)^{\mathsf{T}}\in \C^k\simeq\R^{2k}$,
in accordance with the rule of multiplication of matrices.
Recalling that Equation~\eqref{preJac}
can be written in the form~$j(u_m) = \langle i u_m, \, \d u_m \rangle$ 
(where~$\langle \cdot, \, \cdot \rangle$ stands
for the inner product between complex numbers), we obtain that
\begin{equation}\label{representationprejacobian}
j(u_m)(x)=\sum_{h=1}^\ell \langle iu_m(x), z^m_h \rangle \, v_{h}
\end{equation}
for each $m=1,\ldots,k$.
Therefore, it follows that
\begin{equation}
\mathbf{ju}(x)=\sum_{h=1}^\ell 
z_{\u, h}\otimes v_{h}
\qquad \textrm{where }
 z_{\u,h}:=\langle iu_1(x), z^1_j \rangle \, e^*_1
 + \ldots + \langle iu_k(x), z^k_j \rangle \, e^*_k\in(\R^k)^*.
\end{equation}
We observe that for any $h=1,\ldots, \ell$ and for any $m=1,\ldots,k$, 
by applying the Cauchy-Schwarz inequality we have
\begin{equation*}
\abs{\langle iu_m(x), z^m_h \rangle} 
\leq \abs{iu_m(x)} \, \abs{z^m_h} = \abs{z^m_h},
\end{equation*}
since $\abs{iu_m(x)} = 1$. Then,
\begin{equation}\label{representationprejacobian4}
\begin{aligned}
\sum_{h=1}^\ell \|z_h\|_{p} \, |v_h|
&\geq \sum_{h=1}^\ell\|z_{\textbf{u},h}\|^*_q \, |v_h|.
\end{aligned}
\end{equation}
From the equality~\eqref{representationprejacobian4},
combined with the definition of~$\bar{N}$
(Equation~\eqref{barN-norm}) and the definition of the nuclear norm
of a matrix~\eqref{normonthegradient}
(see also remark~\ref{rk:nuclear}), 
we conclude that~\eqref{eqmass1} holds.

\medskip
\noindent
\textbf{Step 2.} Now, we prove that
\begin{equation} \label{step2-equivalentnorms}
|\nabla \u(x) |_{\nucl,p}\leq \bar{N}(\mathbf{j}\u(x)).
\end{equation}
We first observe that, for any~$v
\in W^{1,1}_{\mathrm{loc}}(\R^d, \, \mathbb{S}^1)$, one has 
\begin{equation}\label{prejacobianandgradient}
 \nabla v(x) = i v(x)\otimes j(v)(x)
\end{equation}
for a.e.~$x\in\R^d$.
Indeed, by differentiating the constraint~$\abs{v}^2 = 1$ a.e.,
for any test vector~$e\in\R^d$
one obtains~$\langle\partial_e v(x), v(x)\rangle = 0$ for a.e.~$x\in\R^d$
and hence, $\partial_e v(x)$ must be parallel to~$iv(x)$ a.e.
Then, taking~\eqref{preJac} into account,
\eqref{prejacobianandgradient} follows.
Now suppose that 
\begin{equation}\label{representationofprejacobian}
\mathbf{ju}(x)=\sum_{h=1}^\ell z_h \otimes v_{h},
\end{equation}
where~$z_h=(z^1_h,\ldots,z^k_h)\in (\R^k)^*$ 
and~$v_h$ is a $1$-covector in $\R^d$, for~$h=1,\ldots,\ell$.
By applying~\eqref{prejacobianandgradient}, we deduce
\begin{equation*}
 \begin{split}
  \nabla\u(x) = \sum_{m=1}^k \nabla u_m(x) \, e^*_m
  = \sum_{m = 1}^k i u_m(x) \, e^*_m \otimes j(u_h)(x)
 \end{split}
\end{equation*}
and hence, thanks to~\eqref{representationofprejacobian},
\begin{equation} \label{representationnablau}
 \begin{split}
  \nabla\u(x) = \sum_{m = 1}^k \sum_{h = 1}^\ell
   i z_h^m u_m(x)  \, e^*_m \otimes v_h
   = \sum_{h = 1}^\ell \zeta_h\otimes v_h,
 \end{split}
\end{equation}
where~$\zeta_h := \sum_{m = 1}^k i z_h^m u_m(x) \, e^*_m \in\C^k$
(and, we recall, $z_h^m\in\R$ is the~$m$-th component of~$z_h$).
Since~$\abs{u_m(x)} = 1$, the Cauchy-Schwarz inequality
implies that $\norm{\zeta_h}_p \leq \norm{z_h}^*_q$ 
for any~$h\in\{1, \ldots, \, \ell\}$. Therefore, 
from~\eqref{normonthegradient}, \eqref{barN-norm}
and~\eqref{representationnablau}, we obtain~\eqref{step2-equivalentnorms}.
\end{proof}

\begin{proof}[Proof of Theorem \ref{A}]
We shall prove the equality~\eqref{equalityenergymimizingmapsandplateus} 
holds true for $1\leq p <\infty$ first, 
and deal with the case~$p = \infty$ later.

\medskip
\noindent
\textbf{Step 1.}
Let~$p\in [1, \, \infty)$ be fixed. In this step we shall prove that
\begin{equation}\label{equivalent1}
\mathbb{E}_{p}(S) = 2\pi \, \mathbb{P}_{\R^k,p}(S) .
\end{equation}
Let $\u \in \mathcal{Q}_{S}$.
From Lemma \ref{equivalentmassnormandgradient}, one has 
\begin{equation}\label{equivalent2}
\int_{\R^d} \abs{\nabla \u}_{\nucl,p} \, \d x
= \int_{\R^d} \abs{\star\mathbf{ju}}_{\mass,p} \, \d x.
\end{equation}
Moreover, $T:=\frac{1}{2\pi} \star \mathbf{ju}$ is an $\R^k$-normal current of dimension $(d-1)$ such that $\partial T=S$ 
(see Remark~\ref{compatibility}) and
\begin{equation}\label{equivalent3}
\mathbb{M}_{p}(T) = \frac{1}{2\pi}
\int_{\R^d} \abs{\star \mathbf{ju}}_{\mass,p} \, \d x
\end{equation}
(see Remark \ref{remarksoncurrents}). 
Therefore, in view of~\eqref{equivalent2} and \eqref{equivalent3},
we deduce that
\begin{equation}\label{equivalent4}
\int_{\R^d} \abs{\nabla \u}_{\nucl,p} \, \d x
= \int_{\R^d} \abs{\star \mathbf{ju}}_{\mass,p} \, \d x 
\geq 2\pi \, \mathbb{P}_{\R^k,p}(S).
\end{equation}
By taking the infimum over all $\u \in \mathcal{Q}_{S}$ in the inequality $\eqref{equivalent4}$, we obtain that
\begin{equation}\label{equivalent44}
\mathbb{E}_{p}(S) \geq 2\pi \, \mathbb{P}_{\R^k,p}(S).
\end{equation}
Now, let~$T$ be an $\R^k$-normal current such that $\partial T=S$ 
and $\mathbb{M}_p(T)\leq \infty$.
We are going to prove that for each $\eta >0$, 
there exists~$\u\in\mathcal{Q}_{S}$ such that
\begin{equation} \label{equivalent6}
2\pi \mathbb{M}_p(T)+\eta \geq E_{p}(\u).
\end{equation}
This will imply the opposite inequality 
\begin{equation}
2\pi \, \mathbb{P}_{\R^k,p}(S)\geq \mathbb{E}_{p}(S),
\end{equation}
and~\eqref{equivalent1} will follow.
Without loss of generality, we can assume that $T$ has compact support.
Indeed, since we have assumed that~$S$ has compact support,
we can consider a closed ball $\bar{\mathbb{B}}^d_r(0)$ with center at
the origin 
and radius $r>0$ such that $\spt S\subset \bar{\mathbb{B}}^d_r(0)$. 
Let $P_{r}\colon\R^d \to \bar{\mathbb{B}}^d_r(0)$ be the nearest-point projection onto~$\bar{\mathbb{B}}^d_r(0)$.
$P_r$ is a Lipschitz map with Lipschitz constant~$1$.
Then, the push-forward $P_{r\#}(T)=(P_{r\#}(T_1),\ldots,P_{r\#}(T_{k}))$
is an~$\R^{k}$-normal current of dimension $d-1$, 
with support contained in $\bar{\mathbb{B}}^d_r(0)$
and $\partial P_{r\#}(T) 
= P_{r\#}(\partial T) = S$. Moreover,
\begin{equation}\label{compactsupportnormalcurrent}
\begin{aligned}
\mathbb{M}_p(P_{r\#}(T)-T)
&\leq \mathop{\sum_{i=1}^k} \mathbb{M}_p(P_{r\#}(T_i)-T_i) 
\leq \mathop{\sum_{i=1}^k} \mathbb{M}_p(T_i \mres (\R^d \setminus \mathbb{B}_{r}(0)))
\end{aligned}
\end{equation}
(where~$\mres$ denotes the restriction).
The right hand side of~\eqref{compactsupportnormalcurrent} tends to zero,
and we obtain that~$P_{r\#}(T)$ converges to~$T$
with respect to the mass norm, as~$r\to\infty$.
Therefore, there is no loss of generality in 
assuming~$T$ is a current of compact support; but then,
\eqref{equivalent6} follows from Lemma~\ref{mainlemma}.
This completes the proof of~\eqref{equivalent1}.

\medskip
\noindent
\textbf{Step 2.}
Again, let~$p\in [1, \, \infty)$.
In this step we shall prove that
\begin{equation} \label{inequalityP2}
\mathbb{H}_{p}(S) \leq \mathbb{E}_{p}(S)\leq \mathbb{H}_{1}(S) 
\leq k^{1-\frac{1}{p}} \, \mathbb{H}_{p}(S).
\end{equation}
In fact, for any~$\u \in \mathcal{Q}_{S}$ there holds
\begin{equation} \label{equivalent11}
 H_p(\u) \leq E_{p}(\u)\leq H_{1}(\u)
 \leq k^{1-\frac{1}{p}} \, H_{p}(\u).
\end{equation}
Indeed, by using \eqref{comparison-massintro} (or see Lemma~\ref{comparenorms}),
Remark~\ref{remarksoncurrents} and
Lemma~\ref{equivalentmassnormandgradient} imply
\begin{equation}
 \int_{\R^d} \left(\sum_{i=1}^k 
  \abs{\star j(u_i)}^p\right)^{\frac{1}{p}} \d x 
 \leq E_p(\u) = \int_{\R^d} \abs{\star \mathbf{j}\u}_{\mass,p} \, \d x
 \leq \int_{\R^d} \sum_{i=1}^k 
  \abs{\star j(u_i)}_{\mass} \d x 
\end{equation}
where~$\abs{\star j(u_i)}$ is the 
Euclidean norm of the classical $(d-1)$-vector~$j(u_i)$.
However, all~$(d-1)$-vectors are simple,
therefore $\abs{\star j(u_i)}
= \abs{j(u_i)}$ a.e.~in~$\R^d$ 
and a standard computation shows 
that~$\abs{j(u_i)} = \abs{\nabla u_i}$ a.e.
This proves
\begin{equation}
 H_p(\u) \leq E_p(\u) \leq H_1(\u).
\end{equation}
Furthermore, the inequality
\begin{equation}
H_{1}(\u) \leq k^{1-\frac{1}{p}} H_p(\u)
\end{equation}
follows from Holder's inequality,
as for any $z\in \R^k$, one has
$\|z\|_{1}\leq k^{1-\frac{1}{p}}\|z\|_{p}$.

\medskip
\noindent
\textbf{Step 3.} 
To complete the proof, we shall check that the equality \eqref{equalityenergymimizingmapsandplateus} holds true in case~$p=\infty$.
To this aim, we will prove that $\mathbb{P}_{\R^k,p}(S)$ is a decreasing function of~$p$ 
and that
\begin{equation}\label{convergenceofPlateau}
\lim_{p\to \infty}\mathbb{P}_{\R^k,p}(S)=\mathbb{P}_{\R^k,\infty}(S).
\end{equation}
Similar properties hold for~$\mathbb{H}_{p}(S)$, $\mathbb{E}_{p}(S)$. 
We begin to prove~\eqref{convergenceofPlateau}. 
Let~$T$ be a normal $\R^k$-current of dimension~$d-1$.
According to Lemma~\ref{inequalityofmass} in the appendix,
for any $1\leq p_1 \leq p_2 \leq \infty$ there holds
\begin{equation}\label{inequalityformassp}
\mathbb{M}_{p_2}(T)\leq \mathbb{M}_{p_1}(T)\leq k^{\frac{p_2-p_1}{p_1p_2}}\mathbb{M}_{p_2}(T).
\end{equation}
The inequality \eqref{inequalityformassp} implies that
\begin{equation}\label{inequalityplateauproblem1}
\mathbb{P}_{\R^k,p_2}(S)\leq \mathbb{P}_{\R^k,p_1}(S)\leq k^{\frac{p_2-p_1}{p_1p_2}}\mathbb{P}_{\R^k,p_2}(S).
\end{equation}
Therefore, $\mathbb{P}_{\R^k,p}(S)$ is decreasing in~$p$
and bounded from below,
so its limit as~$p\to\infty$ exists and is unique. 
In particular, when $p_2=\infty$, $p_1=p\geq 1$, 
the inequality~\eqref{inequalityplateauproblem1} becomes
\begin{equation}\label{inequalityplateauproblem2}
\mathbb{P}_{\R^k,\infty}(S)\leq \mathbb{P}_{\R^k,p}(S)\leq k^{\frac{1}{p}}\, \mathbb{P}_{\R^k,\infty}(S).
\end{equation}
By taking the limit as~$p\to\infty$ in~\eqref{inequalityplateauproblem2}, 
we obtain that
\begin{equation}
\lim_{p\to \infty}\mathbb{P}_{\R^k,p}(S)=\mathbb{P}_{\R^k,\infty}(S).
\end{equation}
By making use of the inequalities
$\norm{z}_{p_2} \leq \norm{z}_{p_1} \leq k^{(p_2 - p_1)/(p_1p_2)} \norm{z}_{p_1}$ for any~$z\in\R^k$ and~$1 \leq p_1 \leq p_2 < \infty$,
in a similar way,
we also can prove that~$\mathbb{H}_{p}(S)$, $\mathbb{E}_{p}(S)$
are decreasing functions of~$p$ and that
\begin{align*}
 \lim_{p\to \infty}\mathbb{H}_{p}(S) &= \mathbb{H}_{\infty}(S), \\
 \lim_{p\to \infty}\mathbb{E}_{p}(S) &= \mathbb{E}_{\infty}(S).
\end{align*}
Finally, by taking the limit as~$p\to\infty$ in both sides 
of~\eqref{equivalent1} and~\eqref{inequalityP2}, 
we obtain that
\begin{equation}\label{equalityforcaseinfinity}
\mathbb{H}_{\infty}(S)\leq \mathbb{E}_{\infty}(S) = 2\pi \, \mathbb{P}_{\R^k,\infty}(S) \leq \mathbb{H}_{1}(S)\leq k\, \mathbb{H}_{\infty}(S),
\end{equation}
which completes the proof.
\end{proof}
\begin{remark} \label{rmthm1}
By reasoning as in Step~3,
we deduce that $\mathbb{P}_{\Z^k,p}(S)$ 
is a decreasing function of~$p$ and that
\begin{equation}
\lim_{p\to \infty}\mathbb{P}_{\Z^k,p}(S)=\mathbb{P}_{\Z^k,\infty}(S).
\end{equation}
Therefore, from Theorem~\ref{A}, we obtain the following inequality:
\begin{equation}\label{chaininequality}
\mathbb{H}_{p}(S)\leq \mathbb{E}_p(S)=2\pi \, \mathbb{P}_{\R^k,p}(S)
\leq 2\pi \, \mathbb{P}_{\Z^k,p}(S)\leq 2\pi \, \mathbb{P}_{\Z^k,1}(S)=\mathbb{H}_{1}(S)
\end{equation}
for any value of~$p\in [1, \, \infty]$. The equality
$2\pi \, \mathbb{P}_{\Z^k,1}(S)=\mathbb{H}_{1}(S)$ in~\eqref{chaininequality} can be established by using the~dipole construction and 
applying the coarea formula for each component-wise
(see~\cite[Remark 18]{BaLe});
moreover, in Section~\ref{section4} we provide an alternative proof
of the same equality (see Remark~\ref{RMequi}).
It is also natural to ask under which conditions Plateau's problems
$\mathbb{P}_{\Z^k,p}(S)$, $\mathbb{P}_{\R^k,p}(S)$ are equivalent 
to~$\mathbb{E}_p(S)$, $\mathbb{H}_p(S)$. 
This is the case, for instance, when~$p=1$, as all
the inequalities in~\eqref{chaininequality} reduce to
equalities when~$p=1$. Moreover, if a minimizer of the problem 
$\mathbb{P}_{\Z^k,p}(S)$ admits a calibration 
(see Definition~\ref{Calibration}), then 
$\mathbb{E}_p(S)$, $\mathbb{P}_{\Z^k,p}(S)$,
and $\mathbb{P}_{\R^k,p}(S)$ are equal to each other.
Indeed, the existence of a calibration implies that 
\begin{equation}\label{sufficentcondition}
\mathbb{P}_{\Z^k,p}(S)=\mathbb{P}_{\R^k,p}(S).
\end{equation}
From~\eqref{sufficentcondition}, with the aid of 
Theorem~\ref{A}, we obtain
\begin{equation}\label{sufficentcondition2}
\mathbb{P}_{\Z^k,p}(S)=\mathbb{P}_{\R^k,p}(S)=\mathbb{E}_p(S).
\end{equation}
\end{remark}

\begin{remark} \label{rk:noncalibration}
In case~$d=2$, the analysis contained in this section 
is consistent with~\cite{BaLe}, although we allow for 
slightly more general boundary data~$S$ than in~\cite{BaLe}
and, most importantly, we do not assume the existence of
a calibration for minimizers of~$\mathbb{P}_{\Z^k,p}(S)$.
However, we do not know if our results extend to maps 
that take their values in products
of higher-dimensional spheres, 
$\mathbb{S}^{m}\times\ldots\times \mathbb{S}^m$
with~$m\geq 2$.
Furthermore, from Theorem~\ref{A} we deduce that,
in the absence of a calibration, the conclusion of~\cite[Theorem~1]{BaLe}
(see Equation~\eqref{harmonic} in the introduction) may fail.
Let us consider the following example,
given in~\cite[Example $4.2$]{Bonafini2018}. 
Let~$d=2$, $k=4$, $p=\infty$, and let~$S$
be the $\Z^4$-current carried by the vertices of a regular
pentagon $P_1P_2P_3P_4P_5$, where the vertex~$P_i$
has multiplicity~$e_i = (0, \ldots, 1, \ldots 0)$ for~$i=1, \ldots, 4$
(the~$1$ is in the~$i$-th position) and~$P_5$
has multiplicity~$(-1, -1, -1, -1)$.
As shown in~\cite{Bonafini2018}, we have
\begin{equation} \label{strictineqPlateau}
2\pi \, \mathbb{P}_{\Z^k,\infty}(S)>2\pi \, \mathbb{P}_{\R^k,\infty}(S).
\end{equation}
This inequality, combined with Theorem \ref{A}, implies that
\begin{equation}
2\pi \, \mathbb{P}_{\Z^k,\infty}(S) > \mathbb{E}_{\infty}(S).
\end{equation}
This provides a counterexample for~\eqref{harmonic},
thus proving that the existence of a calibration
is an assumption that cannot be removed from~\cite[Theorem~1]{BaLe}.
Whether or not the same phenomenon happens also
for other values of~$p\in (1, \, \infty)$ is still an open question.
\end{remark}

As Equation~\eqref{strictineqPlateau} shows, the inequality
$\mathbb{P}_{\R^k,p}(S) \leq \mathbb{P}_{\Z^k,p}(S)$
may be strict, in general. Nevertheless,
in the next section we will derive a bound
for~$\mathbb{P}_{\Z^k,p}(S)$ in terms of~$\mathbb{P}_{\R^k,p}(S)$.
To this end, we will first write a characterization
for~$\mathbb{P}_{\Z^k,p}(S)$ in terms of torus-valued maps.

\section{BV-liftings of torus-valued maps and mass minimization for integral currents}
\label{section4}

In this section, we address the lifting problem for torus-valued map of bounded variation. As we will see later in the section, lifting results will allow us to obtain a bound for~$\mathbb{P}_{\Z^k,p}(S)$ in terms of~$\mathbb{P}_{\R^k,p}(S)$. 

In general terms, the lifting problem can be formulated as follows.
Let~$\mathcal{N}$ be a smooth, compact, connected Riemannian manifold without boundary and let $p\colon\mathcal{C}\to\mathcal{N}$ be the smooth Riemannian universal covering of~$\mathcal{N}$. Let~$\Omega$ be a smooth, bounded domain in $\R^d$ and~$\u\colon\Omega\to\mathcal{N}$ a measurable map.
A measurable map $\ttheta\colon\Omega\to\mathcal{C}$ is called a \textit{lifting} of~$\u$ if $p\circ\ttheta=\u$ a.e.~in~$\Omega$.  
We consider the following question:
given a regular map $\u\colon\Omega \to\mathcal{N}$, is there a lifting $\ttheta\colon\Omega\to\mathcal{C}$ of~$\u$ which has the same regularity of~$\u$? The answer depends on what kind of regularity we assume for~$\u$. For instance, whenever~$\Omega$ is a simple connected domain, a classical topological result guarantees that any~$C^k$-map
$\u\colon\Omega\to\mathcal{N}$ (with $k=0,\ldots, \infty$) has a~$C^k$ lifting.
The same result holds true for the other functional spaces, for instance 
the Sobolev space~$W^{1,p}$ with~$p\geq 2$ (see e.g.~\cite{BethuelZheng, BallZarnescu, Mucci}). However, the conclusion may fail for other functional spaces --- for instance, the Sobolev space~$W^{1,p}$ with $1\leq p< 2$.
(For example, when the domain $\Omega=\mathbb{B}^2_{1}(0)$ is the unit ball in~$\R^2$ and~$\mathcal{N}=\mathbb{S}^{1}$, the map~$u(x):=\frac{x}{|x|}$
belongs to~$W^{1,p}(\Omega, \, \mathbb{S}^1)$ but has no lifting in $W^{1,p}(\Omega, \R)$.) Therefore, we are naturally led to relax the regularity requirements on the liftings and look for a lifting in a larger space.
It was initially proved in~\cite{GiaquintaModicaSoucek} that
for any map~$u\in \BV(\Omega; \mathbb{S}^{1})$, there always exists a function~$\theta \in \BV(\Omega;\R)$ of bounded variation such that $ u=e^{i\theta}$ a.e. (here $\Omega$ does not need to be simply connected).
A similar lifting result can be established when the target 
is a general closed manifold~\cite{GiacomoOrlandi2}.

There are many contributions to the lifting problem (see for instance \cite{Ignat, Bourgain, Merlet, Bethuel, Bedford, Igna2t, MironescuVan} and the references therein).
In this paper, we consider as target space the 
manifold~$\mathcal{N}=\mathbb{T}^k=\mathbb{S}^1\times \ldots \times \mathbb{S}^1\subseteq\C^k$, with its covering space $\mathcal{C}=\R^k$ and the lifting map given by
\begin{equation}
\begin{aligned}
p \colon\R^k &\longrightarrow \mathbb{T}^k=\mathbb{S}^1\times \ldots \times \mathbb{S}^1 \\
(\theta_1,\ldots,\theta_k)&\longmapsto 
(e^{i\theta_1},\ldots,e^{i\theta_k}).
\end{aligned}
\end{equation}
We will also use the following notation for the covering map:
given~$\ttheta = (\theta_1, \, \ldots, \, \theta_k)$, we define
\begin{equation} \label{vector_e}
\textbf{e}^{\textbf{i}\ttheta}:= p(\ttheta) =
(e^{i\theta_1},\ldots,e^{i\theta_k})\in\mathbb{T}^k.
\end{equation}
Since we are interested in $\BV$-liftings (i.e., liftings that belong to the space of \textit{functions of Bounded Variation}), let us briefly recall the notion of $\BV$-space (see e.g.~\cite{Ambrosio} for more details). 
A function~$\ttheta\in L^{1}_{\mathrm{loc}}(\R^d,\R^k)$ is a function of locally bounded variation, $\ttheta\in \BV_{\mathrm{loc}}(\R^d, \R^k)$, if its distributional derivative $\D \ttheta$ is a vector-valued Radon measure 
on the $\sigma$-algebra of Borel sets of $\R^d$ and the total variation of~$\abs{\D\ttheta}(\Omega)$ is finite for any \emph{bounded}, open 
set~$\Omega\subseteq\R^d$. 
The derivative $\D \ttheta$ can then be decomposed as:
\begin{equation}
\D \ttheta=\D^{a}\ttheta+\D^{j}\ttheta+\D^{c}\ttheta,
\end{equation}
where 
\begin{itemize}
	\item $\D^{a}\ttheta$ is the absolutely continuous part of $\D \ttheta$ with respect to the Lebesgue measure,
	which can be written as~$\D^{a}\ttheta=\nabla \ttheta \, \mathcal{L}^{d}$, where $\nabla\ttheta$ is the approximate differential;
	\item $\D^{j}\ttheta$ is the jump part, which can be written as
	$\D^{j}\ttheta=(\ttheta^{+}-\ttheta^{-})\otimes n \, \mathcal{H}^{d-1}\llcorner \mathrm{S}(\ttheta)$, where $\mathrm{S}(\ttheta)$ is the set of approximate discontinuities (or jump set), oriented by the unit normal vector field $n$, and $\ttheta^{+}$, $\ttheta^{-}$ are the one-sided approximate limits of~$\ttheta$ on either side of~$\mathrm{S}(\ttheta)$;
	\item $\D^{c}\ttheta$ is the Cantor part of $\D \ttheta$,
	which is mutually singular to both~$\D^{a}\ttheta$ and~$\D^{j}\ttheta$.
\end{itemize}
The total variation of~$\ttheta\in\BV_{\mathrm{loc}}(\R^d, \R^k)$
on a bounded open set~$\Omega\subseteq\R^d$ can be represented as
\begin{equation}
|\D \ttheta|(\Omega)=\int_{\Omega}|\nabla \ttheta| \, \d x
+ \int_{\mathrm{S}(\ttheta)\cap \Omega}|\ttheta^{+}-\ttheta^{-}| \, \d \mathcal{H}^{d-1}
+ |\D^{c}\ttheta|(\Omega).
\end{equation}
We say that $\ttheta$ is a locally $\SBV$-function,
and we write $\ttheta\in\SBV_{\mathrm{loc}}(\R^d,\R^k)$,
if $\ttheta \in \BV_{\mathrm{loc}}(\R^d,\R^k)$ and $\D^{c}\ttheta=0$.
For any $\ttheta=(\theta_1,\ldots,\theta_k)\in \SBV_{\mathrm{loc}}(\R^d,\R^k)$, and any $p\in [1, \infty]$, we define
\begin{equation}
|\ttheta|_{\SBV,p}:=\int_{\R^d}\|\nabla \ttheta\|_p \, \d x
+ \int_{\mathrm{S} (\ttheta)}\|\ttheta^{+}-\ttheta^{-}\|_p \, \d \mathcal{H}^{d-1},
\end{equation}
where
\[
 \int_{\R^d}\|\nabla \ttheta\|_p\, \d x:=\int_{\R^d}\|(|\nabla \theta_1|,\ldots,|\nabla \theta_k|)\|_p \, \d x =\int_{\R^d}\left(|\nabla \theta_1|^p+\ldots+|\nabla \theta_k|^p\right)^{\frac{1}{p}} \, \d x,
\]
if~$p$ is finite; the case~$p=\infty$ is defined in a similar way.
In general, the value of the quantity $|\ttheta|_{\SBV,p}$ may be infinite.

Throughout this section, we consider again a $(d-1)$-dimensional 
integer-multiplicity flat boundary~$S$ with compact support
and finite mass.
Unless otherwise stated, we assume that~$S$ satisfies
the conditions~\eqref{hp:H} (however, the proof of 
Theorem~\ref{E} will not rely on the assumption~\eqref{hp:H}).
We consider a map~$\u = (u_1, \ldots, u_k)$ that belongs to~$\mathcal{Q}_S$
--- that is, $u\in W^{1,1}_{\mathrm{loc}}(\R^d, \, \C^k)$ is constant 
in a neighbourhood of infinity and satisfies $\star\mathbf{J}(\u) = \pi S$.
A map~$\ttheta=(\theta_1,\ldots,\theta_k)$ 
is said to be a $\BV$-lifting of~$\u$ if $\ttheta$ 
belongs to~$\BV_{\mathrm{loc}}(\R^d; \R^k)$ and $\bf{e}^{\bf{i}\ttheta}=\u$,
where~$\bf{e}^{\bf{i}\ttheta}$ is defined as in~\eqref{vector_e}.
Earlier results on the lifting problem for circle-valued
maps~\cite{GiaquintaModicaSoucek, Ignat} imply that any 
map $\u\in\mathcal{Q}_{S}$ admits a lifting
$\ttheta \in \BV_{\mathrm{loc}}(\R^d; \R^k)$ that satisfies 
a suitable estimate. 
\begin{theorem}\label{B}
	Let $\u=(u_1,\ldots,u_k)$ be in $\mathcal{Q}_{S}$. Then,  there exists a function $\ttheta=(\theta_1,\ldots,\theta_k) \in \SBV_{\mathrm{loc}}(\R^d; \R^k)$ such that
	\begin{equation} \label{Boundlifting-1}
	\mathbf{e}^{\mathbf{i}\ttheta}=\u
	\end{equation}
	and
	\begin{equation}\label{Boundlifting0}
	|\ttheta|_{\SBV,p} \leq 2k^{1-\frac{1}{p}}H_{p}(\u).
	\end{equation}
\end{theorem}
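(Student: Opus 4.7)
The plan is to lift each circle-valued component $u_i$ separately by the scalar lifting theorem of Ignat \cite{Ignat}, and then assemble the scalar liftings into a single $\R^k$-valued function. By definition of $\mathcal{Q}_S$, each $u_i\in W^{1,1}_{\mathrm{loc}}(\R^d,\mathbb{S}^1)$ is constant outside a ball, hence $\int_{\R^d}|\nabla u_i|\,\d x < \infty$. Applying Ignat's $\BV$-lifting result to $u_i$ on a large ball containing the support of $\nabla u_i$, and extending by the corresponding constant outside, produces $\theta_i\in\SBV_{\mathrm{loc}}(\R^d,\R)$ such that $e^{i\theta_i}=u_i$ a.e.\ and
\begin{equation} \label{planB:scalar}
|\theta_i|_{\SBV,1} = \int_{\R^d}|\nabla\theta_i|\,\d x + \int_{\mathrm{S}(\theta_i)}|\theta_i^+-\theta_i^-|\,\d\mathcal{H}^{d-1}\leq 2\int_{\R^d}|\nabla u_i|\,\d x.
\end{equation}
Setting $\ttheta:=(\theta_1,\ldots,\theta_k)$, both $\ttheta\in\SBV_{\mathrm{loc}}(\R^d,\R^k)$ and $\mathbf{e}^{\mathbf{i}\ttheta}=\u$ a.e.\ follow directly from this componentwise construction.

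To pass from the scalar bounds to the vectorial estimate, I would use two elementary comparisons between $\ell^p$-norms on $\R^k$: (i) $\|z\|_p\leq\|z\|_1$, since $\ell^p$-norms are non-increasing in $p$; and (ii) $\|z\|_1\leq k^{1-1/p}\|z\|_p$ by H\"older's inequality (the same comparison already used in Step~2 of the proof of Theorem~\ref{A}). Observing that $\mathrm{S}(\ttheta)=\bigcup_i\mathrm{S}(\theta_i)$ up to $\mathcal{H}^{d-1}$-null sets and that $\theta_i^+-\theta_i^-=0$ $\mathcal{H}^{d-1}$-a.e.\ outside $\mathrm{S}(\theta_i)$, applying (i) pointwise to both $\nabla\ttheta(x)$ and $\ttheta^+(x)-\ttheta^-(x)$ yields
\begin{equation*}
|\ttheta|_{\SBV,p}\leq \sum_{i=1}^k\left(\int_{\R^d}|\nabla\theta_i|\,\d x + \int_{\mathrm{S}(\theta_i)}|\theta_i^+-\theta_i^-|\,\d\mathcal{H}^{d-1}\right) = \sum_{i=1}^k|\theta_i|_{\SBV,1}.
\end{equation*}
Combined with \eqref{planB:scalar}, this gives $|\ttheta|_{\SBV,p}\leq 2H_1(\u)$, and applying (ii) pointwise to $(|\nabla u_1(x)|,\ldots,|\nabla u_k(x)|)$ yields $H_1(\u)\leq k^{1-1/p}H_p(\u)$, whence \eqref{Boundlifting0}. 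The case $p=\infty$ is handled identically, with $k^{1-1/p}$ replaced by $k$ (via the bound $\|z\|_1\leq k\|z\|_\infty$).

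The substantive analytic content of the proof is essentially absorbed into Ignat's scalar theorem, which already provides the sharp constant $2$ for $\mathbb{S}^1$-valued maps; no new obstacle arises in treating $k$ independent components, because $\mathbb{T}^k$ is a product manifold with a product covering map, so the lifting factorizes and the vectorial jumps are controlled componentwise by~(i). The one minor technical point is to check that Ignat's theorem, typically stated on bounded Lipschitz domains, applies to maps on $\R^d$ that are constant near infinity; this is a routine localization, since one may apply the result on a ball containing the support of $\nabla u_i$ and then extend the resulting $\theta_i$ by the corresponding constant outside, without creating any additional jump.
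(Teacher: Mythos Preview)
Your proposal is correct and follows essentially the same route as the paper: apply the D\'avila--Ignat scalar lifting theorem componentwise to obtain~\eqref{planB:scalar}, sum to get $|\ttheta|_{\SBV,1}\leq 2H_1(\u)$, then use the monotonicity $\|\cdot\|_p\leq\|\cdot\|_1$ and H\"older's inequality $\|\cdot\|_1\leq k^{1-1/p}\|\cdot\|_p$ to conclude. Your additional remark about localizing Ignat's result to a ball containing the support of $\nabla u_i$ is a minor technical clarification that the paper leaves implicit.
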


\begin{remark} \label{rk:liftingfirst}
The existence of a BV-lifting for torus-valued maps,
and more generally for maps with values in a closed manifold,
has been proved in~\cite[Theorem~1]{GiacomoOrlandi2}.
However, the results of~\cite{GiacomoOrlandi2} do not provide
an explicit estimate for the constant in~\eqref{Boundlifting0}.
Instead, we will apply a result by D\'avila and Ignat~\cite{Ignat},
who proved the existence of BV-liftings for circle-valued
maps and provided the optimal value of the constant in~\eqref{Boundlifting0}
when~$k=1$. 
We do not know whether the factor~$2k^{1-\frac{1}{p}}$ is optimal
for~$k > 1$ (see Theorem \ref{D}).
\end{remark}

\begin{proof}[Proof of Theorem~\ref{B}]
Applying the results of~\cite{Ignat}, for each~$j=1,\ldots,k$
there exists a function~$\theta_j\in \SBV_{\mathrm{loc}}(\R^d,\R)$ 
such that~$\theta_j$ is a lifting of~$u_j$, i.e.
\begin{equation}
 e^{i\theta_{j}}=u_j,
\end{equation}
and 
\begin{equation} \label{liftinginequality1}
|\theta_{j}|_{\BV} =\int_{\R^d} |\nabla \theta_j| \, \d x
 + \int_{\mathrm{S}(\theta_j)} |\theta^{+}_j-\theta^{-}_j| \, \d \mathcal{H}^{d-1}\leq 2\int_{\R^{d}}|\nabla u_j| \, \d x.
\end{equation}
Let $\ttheta=(\theta_{1},\ldots,\theta_{k})$. 
By taking the sum over~$j$ in both sides of~\eqref{liftinginequality1},
we obtain
\begin{equation} \label{liftinginequality2}
|\ttheta|_{\SBV,1} \leq 2 H_{1}(\u).
\end{equation}
Taking into account that, for any~$z\in\R^k$, 
the function $p\in [1, \, \infty]\mapsto \norm{z}_p$
is decreasing (see Lemma~\ref{decreasing}), we obtain
\begin{equation} 
|\ttheta|_{\SBV,p} \leq |\ttheta|_{\SBV,1}
\leq 2 H_{1}(\u) \leq 2 k^{1 - 1/p} \, H_{p}(\u),
\end{equation}
we have made use of~\eqref{liftinginequality2}
and applied the Holder's inequality.
\end{proof}

\begin{remark}\label{remarklifting1}
One has $|\nabla \theta_{j}(x)|=|\nabla u_{j}(x)|$ at points where $\theta_{j}$ is approximately differentiable. Indeed, let $x$ be a point of approximate differentiability for~$\theta$. By differentiating the identity $e^{i\theta_j}=u_j$,
we obtain for any $\ell=1,\ldots,d$
\begin{equation}
\frac{\partial\theta_j(x)}{\partial x_\ell} \, e^{i\theta_{j}(x)}
=\frac{\partial u_{j}(x)}{\partial x_\ell}.
\end{equation}
Thus,
\begin{equation}
\left|\frac{\partial\theta_j(x)}{\partial x_\ell}\right|
=\left|\frac{\partial u_j(x)}{\partial x_\ell}\right|,
\end{equation}
As a consequence, we deduce
\begin{equation}
|\nabla \theta_j(x)|=|\nabla u_j(x)|
\end{equation}
for a.e. $x\in \R^d$
and hence, 
\begin{equation}
\int_{\R^d}\|\nabla \ttheta\|_p \, \d x
= H_{p}(\u)
\end{equation}
for any~$p\in [1, \, \infty]$.
\end{remark}

We now address the relationship between the lifting problem
and mass minimization among integral currents,
and prove Theorem~\ref{C}.
For convenience, we recall some notation.
Given~$S$ as above, let us choose a 
map~$\u=(u_{1},\ldots,u_{k})\in \mathcal{Q}_{S}$.
This is possible because, under our assumption~\eqref{hp:H} for~$S$,
the set~$\mathcal{Q}_{S}$ is non-empty (see Remark~\ref{nonempty}).
We consider the following problem, for~$p\in [1, \, \infty]$:
\begin{equation} \tag{L} 
\mathbb{L}_p(S, \, \u) := \inf\left\{ \int_{\mathrm{S}(\ttheta)} 
\|\ttheta^+(x) - \ttheta^-(x)\|_{p} \,\d \mathcal{H}^{d-1}(x) \colon
\ttheta \textrm{ is a BV-lifting of } \u \right\}.
\end{equation}
First, we show that the infimum at the right-hand 
side of~\eqref{min_lifting} 
is independent of the choice of $\u$.

\begin{lemma} \label{lemma:indip_S}
 If~$\u\in\mathcal{Q}_{S}$, $\mathbf{v}\in\mathcal{Q}_{S}$
 (in particular, $\star \mathbf{J}(\u) = \star \mathbf{J}(\mathbf{v}) =\pi S$),
 then $\mathbb{L}_p(S,\u)=\mathbb{L}_p(S, \mathbf{v})$ for any~$p\in [1, \, \infty]$.
\end{lemma}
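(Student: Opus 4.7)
The plan is to transfer BV-liftings between $\u$ and $\mathbf{v}$ through a Sobolev-regular correction, so that the jump contribution is unchanged. Concretely, I would introduce the ``quotient'' map $\mathbf{w}=(u_1\bar v_1,\ldots,u_k\bar v_k)\colon\R^d\to\mathbb{T}^k$, which lies in $W^{1,1}_{\mathrm{loc}}(\R^d;\mathbb{T}^k)$ because each $w_j$ is a bounded product of $W^{1,1}_{\mathrm{loc}}$ functions, and is constant outside a large ball. A short computation using $|u_j|=|v_j|=1$ a.e. yields $\bar{w}_j\,\d w_j = \bar{u}_j\,\d u_j + v_j\,\d\bar{v}_j = i\bigl(j(u_j) - j(v_j)\bigr)$, i.e.\
\[
 j(u_j\bar v_j) = j(u_j) - j(v_j)
\]
as distributions. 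Consequently $J(w_j) = J(u_j) - J(v_j)$, and the hypothesis $\star\mathbf{J}(\u) = \star\mathbf{J}(\mathbf{v}) = \pi S$ gives $\mathbf{J}(\mathbf{w}) = 0$.

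Next I would build a Sobolev lifting $\bm{\phi}=(\phi_1,\ldots,\phi_k)$ of $\mathbf{w}$ componentwise. Since $J(w_j)=0$, the pre-Jacobian $j(w_j)\in L^1(\R^d)$ is a closed $1$-form in the distributional sense; because $\R^d$ is simply connected, a Poincar\'e-lemma argument at the $L^1$ level (e.g.\ by convolution with a standard mollifier plus an exhaustion by balls) produces $\phi_j\in W^{1,1}_{\mathrm{loc}}(\R^d)$ with $\d\phi_j = j(w_j)$. Using the identity $\bar{w}_j\,\d w_j = i\, j(w_j)$ valid a.e.\ for any $\mathbb{S}^1$-valued Sobolev map, I would then compute $\d(e^{-i\phi_j}w_j) = e^{-i\phi_j}\bigl(\d w_j - iw_j\,\d\phi_j\bigr) = 0$, so $e^{-i\phi_j}w_j$ is a.e.\ a unimodular constant which can be absorbed into $\phi_j$. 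This gives $w_j = e^{i\phi_j}$ a.e. The main technical step is precisely this construction of $\bm{\phi}$, and it is also where one must be careful, since the standard Poincar\'e lemma for smooth forms has to be extended to $L^1$ closed forms on $\R^d$ and the distributional identification of $w_j$ with $e^{i\phi_j}$ has to be checked at the Sobolev level.

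Once $\bm\phi$ is in hand, the conclusion is essentially formal. Let $\ttheta\in\BV_{\mathrm{loc}}(\R^d;\R^k)$ be any BV-lifting of $\u$, and set $\ttheta' := \ttheta - \bm\phi$. Then $\ttheta'\in\BV_{\mathrm{loc}}(\R^d;\R^k)$ and componentwise $e^{i(\theta_j - \phi_j)} = u_j\,\bar{w}_j = u_j\overline{u_j\bar{v}_j} = v_j$, so $\ttheta'$ is a BV-lifting of $\mathbf{v}$. Since $\bm\phi\in W^{1,1}_{\mathrm{loc}}$, its distributional derivative is purely absolutely continuous; hence the decomposition $\D\ttheta' = \D\ttheta - \D\bm\phi$ shows that $\ttheta'$ has the same jump set as $\ttheta$ and the same one-sided traces on it. In particular
\[
 \int_{\mathrm{S}(\ttheta')}\|(\ttheta')^+ - (\ttheta')^-\|_p\,\d\mathcal{H}^{d-1}
 = \int_{\mathrm{S}(\ttheta)}\|\ttheta^+ - \ttheta^-\|_p\,\d\mathcal{H}^{d-1}.
\]
Passing to the infimum over all BV-liftings $\ttheta$ of $\u$ yields $\mathbb{L}_p(S,\mathbf{v}) \leq \mathbb{L}_p(S,\u)$, and the opposite inequality follows by swapping the roles of $\u$ and $\mathbf{v}$ (the correction to use is then $-\bm\phi$). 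This gives the claimed equality $\mathbb{L}_p(S,\u)=\mathbb{L}_p(S,\mathbf{v})$.
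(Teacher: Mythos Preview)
Your proposal is correct and follows essentially the same route as the paper: form the quotient map $w_j = u_j\bar v_j$, observe $J(w_j)=0$, lift it to a Sobolev phase $\phi_j$, and transfer BV-liftings of $\u$ to BV-liftings of $\mathbf{v}$ via $\ttheta\mapsto\ttheta-\bm\phi$, preserving the jump part. The only difference is that the paper cites \cite[Lemma~1.8]{Brezis2021} for the existence of $\phi_j\in W^{1,1}$ with $w_j=e^{i\phi_j}$, whereas you sketch this step by hand via a Poincar\'e-lemma argument for $L^1$ closed $1$-forms on $\R^d$.
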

\begin{proof}
 Since $\, \star \mathbf{J}(\u) = \star \mathbf{J}(\mathbf{v}) =\pi S$,
 by an explicit computation we obtain
 \[
  J(u_j\overline{v_j})=J(u_j)-J(v_j)=0
 \]
 for any index~$j=1,\ldots, k$. Therefore,
 there exists $w_{j}\in W^{1,1}(\R^d;\R)$ such that 
 \begin{equation}
  u_{j}\overline{v_j}=e^{iw_{j}}
 \end{equation}
 (see for instance \cite[Lemma $1.8$]{Brezis2021}).
 On the other hand, we have $v_{j}\overline{v_j}=|v_{j}|^2=1$, which implies
 \begin{equation}
  u_{j}=e^{iw_{j}}v_{j}.
 \end{equation}
 Let $\ttheta=(\theta_{1},\ldots,\theta_{k})$ be a $\BV$-lifting of~$\u$.
 Then, $\bar{\ttheta}=(\theta_{1}-w_1,\ldots,\theta_{k}-w_k)$
 is a $\BV$-lifting of~$\mathbf{v}$ with the same jump as~$\ttheta$,
 since each~$w_j$ is a Sobolev function.
 Conversely, for any $\BV$-lifting of~$\mathbf{v}$ there 
 exists a $\BV$-lifting of~$\u$ with the same jump.
 Therefore, $\mathbb{L}_p(S,\u) = \mathbb{L}_p(S, \mathbf{v})$, as claimed.
\end{proof}

In view of Lemma~\ref{lemma:indip_S}, from now on we
write~$\mathbb{L}_p(S)$ instead of~$\mathbb{L}_p(S,\u)$.

\begin{remark} \label{rk:existence-minim-L}
 The infimum at the left-hand side of~\eqref{min_lifting}
 is always attained. Indeed, if~$(\ttheta_j)_{j\in\N}$
 is a minimizing sequence for~$\mathbb{L}_p(S)$, then
 the chain rule implies that~$\ttheta_j\in\SBV_{\mathrm{loc}}(\R^d, \, \R^k)$
 (if~$\ttheta_j$ had a nonvanishing Cantor part, then~$\u= \mathbf{e}^{\mathbf{i}\ttheta}$ would have a nonvanishing Cantor part, too,
 because the map~$\theta\mapsto e^{i\theta}$ has injective differential
 at any point). Moreover, the abolutely continuous gradient~$\nabla\ttheta_j$
 is bounded in~$L^1(\R^d)$, by Remark~\eqref{remarklifting1}.
 Therefore, the sequence $(\ttheta_j)_{j\in\N}$ is bounded 
 in~$\BV_{\mathrm{loc}}(\R^d)$
 and, up to extraction of a subsequence, it converges weakly in~$\BV$ 
 to a limit~$\ttheta\in\BV_{\mathrm{loc}}(\R^d, \, \R^k)$,
 which is a lifting of~$\u$ and a minimizer of~\eqref{min_lifting}.
\end{remark}

\begin{proof}[Proof of Theorem~\ref{C}]
Let~$\u\in\mathcal{Q}_S$ be given.
We need to prove that~$\mathbb{L}(S) = 2\pi \, \mathbb{P}_{\Z^k,p}(S)$.
 The proof relies on the properties of the jump set for liftings
of torus-valued $W^{1,1}$-maps. In case~$k=1$, this properties
have been studied in, e.g., \cite{GiaquintaModicaSoucek, Ignat-Lifting}.
However, for the reader's convenience, we will present the arguments 
in a (mostly) self-contained way.

\medskip
\noindent
\textbf{Step 1.} First, we prove the inequality
\begin{equation} \label{lifting2-ineq1}
\mathbb{L}_p(S)\geq 2\pi \, \mathbb{P}_{\Z^k,p}(S).
\end{equation}
Let $\ttheta=(\theta_1,\ldots,\theta_k)$ be a ${\BV}$-lifting of $\u$.
We can define a current~$T(\ttheta)$, with coefficients in $\Z^k$,
associated with the jump part of $\ttheta$, in such a way that~$\partial T(\ttheta)=S$. More precisely, $T(\ttheta)$ is defined component-wise
as~$T(\ttheta) = (T_1, \ldots, T_k)$, where 
\[
 T_i =\llbracket \mathrm{S}(\theta_i), \tau_i, c_i \rrbracket
\]
for each index~$i$.
Here, $\mathrm{S}(\theta_i)$ is the jump set of $\theta_i$, 
$c_i(x):=\frac{1}{2\pi}(\theta^{+}_i(x)-\theta^{-}_i(x))$ for $\mathcal{H}^{d-1}$ a.e. $x\in \mathrm{S}(\theta_i)$, and~$\tau_i$ is a unit $(d-1)$-vector orienting the jump set $\mathrm{S}(\theta_i)$. We observe that
the function~$c_i$ must take integer values
(at least~$\mathcal{H}^{d-1}$-almost everywhere),
because~$u_i = e^{i\theta_i}$ is a Sobolev function, with no jump.

We shall now prove that each component~$T_i$ has boundary~$S_i$.
The differential operator~$\d$ satisfies
the identity~$\d^2\theta_i = 0$ in the sense of distributions --- 
that is, for any $v\in  C^{\infty}_{\mathrm{c}}(\R^d, \Lambda_{2}(\R^d))$ one has
\begin{equation}\label{ddoftheta}
 \langle\d\theta_{i}, \, \d^{*}v \rangle=0.
\end{equation}
Here~$\langle\cdot, \, \cdot\rangle$ is the duality
between (form-valued) measures and (vector-valued) functions, 
while~$\d^*$ is the codifferential, defined in~\eqref{adjointoperatorofd}.
From Remark~\ref{remarklifting1}, at the point $x$ where $\theta_{i}$ is approximately differentiable, one has 
\begin{equation}\label{absolutepart}
\d \theta_{i}(x)=u_i(x) \wedge \d u_{i}(x)= j(u_{i})(x).
\end{equation}
In view of~\eqref{absolutepart}, Equation~\eqref{ddoftheta}
can be written as
\begin{equation} \label{ddoftheta2}
 \int_{\R^d}\langle j(u_{i}), \d^{*}v\rangle \, \d x 
 + \int_{S(\theta_i)}(\theta_{i}^{+}-\theta_{i}^{-})(n_i, \d^{*}v) \, \d\mathcal{H}^{d-1} = 0,
\end{equation}
where~$n_i$ is the unit normal to~$\mathrm{S}(\theta_i)$. 
We observe that $(-1)^{d-1}n_i = (\star\tau_i)^\#$, because~$\tau_i$
is the unit tangent $(d-1)$-vector to~$\mathrm{S}(\theta_i)$.
Let~$\omega := \star v$. As~$\star$ is an isometry, we have
\begin{equation} \label{ddoftheta3}
 -(n_i, \d^*v) = (-1)^{d} (n_i, \, \star\d\omega)
 = -(\star n_i, \, \d\omega) = (\tau_i, \, \d\omega) 
\end{equation}
(where~$(\cdot, \, \cdot)$ denotes the Eulidean scalar product
on both vectors and covectors).
From \eqref{adjointoperatorofd}, \eqref{ddoftheta2} and~\eqref{ddoftheta3},
we deduce that
\begin{equation} \label{ddoftheta4}
 \int_{\R^d}\langle \d j(u_{i}), v\rangle \, \d x 
 -2\pi \int_{S(\theta_i)}c_i \, (\tau_i, \, \d\omega) \, \d\mathcal{H}^{d-1} = 0
\end{equation}
or equivalently,
\[
 \star J(u_i) (\omega) - \pi \, T_i(\d\omega) = 0 
 \qquad \Longleftrightarrow \qquad
 \left(\star J(u_i) -\pi\,\partial T_i\right)(\omega) = 0.
\]
As~$v$ is arbitrary, and hence~$\omega$ is, we have proved that
$\pi\,\partial T_i = \frac{1}{\pi} J(u_i) = S_i$, as claimed.
Moreover, by definition of $T(\ttheta)$, one has 
\begin{equation}
\int_{\mathrm{S}(\ttheta)}\|\ttheta^{+}(x)-\ttheta^{-}(x)\|_p \, \d\mathcal{H}^{d-1}(x)=2\pi \, \mathbb{M}_p(T(\ttheta)),
\end{equation}
so~\eqref{lifting2-ineq1} follows.

\medskip
\noindent
\textbf{Step 2.}
Now, we prove the opposite inequality,
\begin{equation} \label{lifting2-ineq2}
\mathbb{L}_p(S)\leq 2\pi \, \mathbb{P}_{\Z^k,p}(S).
\end{equation}
Let $\bar{T}=(\bar{T}_1,\bar{T}_2,\ldots,\bar{T}_k)$ be a $\Z^k$-rectifiable current such that $\partial \bar{T}=S$. 
Let~$\u\in\mathcal{Q}_{S}$ ($\mathcal{Q}_{S}$ is non-empty, see Remark \ref{nonempty}): we shall prove that, for any index~$j$, there exists a ${\BV}$-lifting 
$\bar{\theta}_{j}$ of $u_j$ 
such that the jump part of $\theta_{j}$ is associated with
the current~$\bar{T}_{j}$ --- that is,
\begin{equation} \label{liftingTbar}
 \bar{T}_j=\llbracket S(\bar{\theta}_j), \, \bar{\tau}_j, \,  \bar{c}_j \rrbracket,
\end{equation}
where~$\mathrm{S}(\bar{\theta}_j)$ is the jump set of~$\bar{\theta}_j$, 
$\bar{\tau}_j$ is a unit~$(d-1)$-vector field that orients~$\mathrm{S}(\bar{\theta}_j)$ and $\bar{c}_j(x)=\frac{1}{2\pi}\left(\bar{\theta}_j^{+}(x)-\bar{\theta}_j^{-}(x)\right)$ for $\mathcal{H}^{d-1}$-a.e.~$x\in S_{\bar{\theta}_j}$. 
Indeed, let $\ttheta = (\theta_1, \, \ldots, \, \theta_k)$
be a $\BV$-lifting of $\u$ (which exists, by Theorem~\ref{B}),
and let~$T(\ttheta)$ be 
the current associated with the jump part of $u_j$, 
as constructed in Step~1. As we have seen,
$\partial T(\ttheta)=S_j$, which implies
$\partial (\bar{T} - T(\ttheta)) = 0$.
Therefore, for any index~$j$ there exists a $d$-dimensional 
integral current~$R_j$, with locally finite mass, 
such that $\partial R_j= \bar{T}_j- T_j(\ttheta)$. 
Moreover, $\star R_i$ can be identified 
with a scalar function, which belongs to~$L^{1}_{\rm loc}(\R^d, \, \R)$
and takes its values in~$\Z$, since $R_j$ is integral. From~\eqref{partial-d-bis}, one also has
\begin{equation}\label{jumpset1}
\d(\star R_j)=-\star(\partial R_j)=
\star\left(T_j(\ttheta)-\bar{T}_j\right) \! ,
\end{equation}
\begin{equation}\label{jumpset2}
\bar{\theta}_j=\theta-2\pi \star R_j.
\end{equation}
Then, $\bar{\theta}_j$ is a $\rm BV$-lifting of $u_j$ 
and from~\eqref{jumpset2}, \eqref{jumpset1} we deduce that the current associated with the jump set of~$\bar{\theta}_j$
is $T(\theta_{j})-\left(T_j(\ttheta)-\bar{T}_j\right)=\bar{T}_j$.
(Indeed, by construction $T(\theta_j)$ coincides with the Hodge dual
to the jump part of the distributional differential~$\d\theta_j$, 
up to a sign.) 
Therefore, the function $\bar{\ttheta}=(\bar{\theta}_1,\ldots,\bar{\theta}_k)$ is a ${\BV}$-lifting of $\mathbf{u}=(u_1,\ldots,u_k)$ which satisfies~\eqref{liftingTbar}.
Moreover,
\begin{equation}
\mathbb{L}_p(S) \leq \int_{S(\bar{\ttheta})}\|\bar{\ttheta}^+(x)-\bar{\ttheta}^-(x) \|_p \, \d\mathcal{H}^{d-1}(x)=2\pi\mathbb{M}_p(\bar{T}).
\end{equation}
Therefore, we can conclude that~\eqref{lifting2-ineq2} holds. 
\end{proof}

From Theorem~\ref{C}, we can deduce a sufficient and necessary condition 
for the equality~$\mathbb{H}_p(S) = \mathbb{P}_{\Z^k,p}(S)$ 
to hold, in terms of the lifting constant in~\eqref{Boundlifting0}.

\begin{theorem}\label{D} 
 Let~$1\leq p \leq \infty$ and let~$S$ be a~$\Z^k$-boundary 
 of compact support~$S$ and finite mass
 that satisfies the assumption~\eqref{hp:H}.
 Then, there holds $2\pi \, \mathbb{P}_{\Z^k,p}(S)=\mathbb{H}_{p}(S)$
 if and only if for any $\u\in \mathcal{Q}_{S}$,
 there exists a lifting $\ttheta \in \SBV_{\mathrm{loc}}(\R^d;\R^k)$ 
 of~$\u$ such that 
 \begin{equation}\label{Boundlifting}
  \abs{\ttheta}_{\SBV,p} \leq 2H_{p}(\u).
 \end{equation}
\end{theorem}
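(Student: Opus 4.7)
The plan is to combine Theorem~\ref{C} with Remark~\ref{remarklifting1} to reduce the statement to a comparison between infima. The crucial observation is that for \emph{any} BV-lifting $\ttheta$ of a map $\u\in\mathcal{Q}_S$, the absolutely continuous part of $|\ttheta|_{\SBV,p}$ is automatically prescribed: by Remark~\ref{remarklifting1}, $|\nabla\theta_j(x)| = |\nabla u_j(x)|$ at a.e.~point of approximate differentiability, so
\[
 |\ttheta|_{\SBV,p} = H_p(\u) + \int_{\mathrm{S}(\ttheta)} \|\ttheta^+ - \ttheta^-\|_p \, \d\mathcal{H}^{d-1}.
\]
Thus the bound~\eqref{Boundlifting} is equivalent to $\int_{\mathrm{S}(\ttheta)} \|\ttheta^+ - \ttheta^-\|_p \, \d\mathcal{H}^{d-1} \leq H_p(\u)$, and the latter quantity is exactly what is minimized in the definition of $\mathbb{L}_p(S,\u)$. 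By Lemma~\ref{lemma:indip_S} this minimum is independent of~$\u$, and by Theorem~\ref{C} it equals $2\pi\,\mathbb{P}_{\Z^k,p}(S)$.

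For the forward direction, assume $2\pi\,\mathbb{P}_{\Z^k,p}(S) = \mathbb{H}_p(S)$ and pick any $\u\in\mathcal{Q}_S$. By Remark~\ref{rk:existence-minim-L} the infimum $\mathbb{L}_p(S,\u)$ is attained, so there is a lifting $\ttheta\in\SBV_{\mathrm{loc}}(\R^d;\R^k)$ of $\u$ whose jump contribution equals $\mathbb{L}_p(S) = 2\pi\,\mathbb{P}_{\Z^k,p}(S) = \mathbb{H}_p(S) \leq H_p(\u)$. Adding $H_p(\u)$ to both sides and using the decomposition above gives $|\ttheta|_{\SBV,p} \leq 2H_p(\u)$, as required.

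For the converse, assume that every $\u\in\mathcal{Q}_S$ admits a lifting satisfying~\eqref{Boundlifting}. Then for such a lifting the jump part is at most $H_p(\u)$, hence $\mathbb{L}_p(S) \leq H_p(\u)$. Taking the infimum over $\u\in\mathcal{Q}_S$ yields $2\pi\,\mathbb{P}_{\Z^k,p}(S) = \mathbb{L}_p(S) \leq \mathbb{H}_p(S)$, and combining this with the chain of inequalities $\mathbb{H}_p(S) \leq 2\pi\,\mathbb{P}_{\Z^k,p}(S)$ already established in Remark~\ref{rmthm1} closes the loop.

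The proof is essentially a bookkeeping argument built on top of Theorem~\ref{C}; no new analytic machinery is needed. The only point that requires a little care is invoking the existence of an actual minimizer for $\mathbb{L}_p(S,\u)$ in the forward direction, but this is guaranteed by Remark~\ref{rk:existence-minim-L}. I would not expect a genuine obstacle here.
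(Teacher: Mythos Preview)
Your proof is correct and follows essentially the same route as the paper's own argument: both directions hinge on Theorem~\ref{C}, Remark~\ref{remarklifting1}, and (for the forward direction) the existence of a minimizer from Remark~\ref{rk:existence-minim-L}. If anything, you are slightly more explicit than the paper in invoking Remark~\ref{rmthm1} to supply the standing inequality $\mathbb{H}_p(S)\leq 2\pi\,\mathbb{P}_{\Z^k,p}(S)$ that closes the converse direction; the paper leaves this implicit.
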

\begin{proof}[Proof of Theorem~\ref{D}]
Suppose first that $2\pi \, \mathbb{P}_{\Z^k,p}(S)=\mathbb{H}_{p}(S)$. 
Let~$\u\in\mathcal{Q}_{S}$. By Theorem~\ref{C}
and Remark~\ref{rk:existence-minim-L}, there exists 
a lifting $\ttheta\in\SBV(\R^d;\R^k)$ of~$\u$ such that
\begin{equation}\label{Boundlifting1}
\int_{\mathrm{S}(\ttheta)}\|\ttheta^{+}(x)-\ttheta^{-}(x)\|_p\,\d\mathcal{H}^{d-1}(x)
= \mathbb{L}_p(S) = \mathbb{H}_p(S) \leq H_{p}(\u).
\end{equation}
From Remark~\ref{remarklifting1} and the inequality \eqref{Boundlifting1}, we deduce that
\begin{equation*}\label{Boundlifting2}
\abs{\ttheta}_{\SBV,p} \leq 2H_{p}(\u).
\end{equation*}
Conversely, suppose that for any~$\u\in\mathcal{Q}_{S}$
there exists a lifting $\ttheta \in \SBV(\R^d;\R^k)$ 
of~$u$ such that 
\begin{equation*}\label{Boundlifting3}
\abs{\ttheta}_{\SBV,p} \leq 2H_{p}(\u).
\end{equation*}
Again, from Remark~\ref{remarklifting1}, this implies
\begin{equation} \label{Boundlifting4}
\int_{\mathrm{S}(\ttheta)}\|\ttheta^{+}(x)-\ttheta^{-}(x)\|_p \, \d\mathcal{H}^{d-1}(x)\leq  H_{p}(\u),
\end{equation}
Combining~\eqref{Boundlifting4} with Theorem~\ref{C}, we obtain that
\begin{equation*}
2\pi \, \mathbb{P}_{\Z^k,p}(S)\leq H_{p}(\u).
\end{equation*}
As~$\u$ can be taken arbitrarily, we conclude that
\begin{equation*}
2\pi \, \mathbb{P}_{\Z^k,p}(S) \leq \mathbb{H}_{p}(S).
\qedhere
\end{equation*}
\end{proof}

\begin{remark}\label{RMequi}
The equality~$2\pi \, \mathbb{P}_{\Z^k,p}(S)=\mathbb{H}_{p}(S)$
is true when $k=1$ or $p=1$, because in this case,
we have the lifting property with factor~$2$, 
by Theorem~\ref{B}.
However, the equality~$2\pi \, \mathbb{P}_{\Z^k,p}(S)
=\mathbb{H}_{p}(S)$ may fail in general. One may give examples
(see \cite[Example~4.2]{Bonafini2018} and Remark~\ref{rk:noncalibration} 
above) where
\begin{equation*}
\mathbb{P}_{\Z^k,p}(S)>\mathbb{P}_{\R^k,p}(S).
\end{equation*}
In this case, Theorem \ref{A} implies 
\begin{equation}
\mathbb{H}_{p}(S)<\mathbb{P}_{\Z^k,p}(S).
\end{equation}
and, by Theorem~\ref{D}, the optimal lifting constant
(with respect to the~$p$-norm) for maps~$\u\in\mathcal{Q}_S$
will be strictly larger than~$2$.
\end{remark}

Finally, from Theorem~\ref{A}, Theorem~\ref{B}, and Theorem~\ref{C} we deduce the core result of our paper, namely Theorem \ref{E}.

\begin{proof}[Proof of Theorem \ref{E}]
We need to prove that~$\mathbb{P}_{\Z^k,p}(S) \leq (2 k^{1 - 1/p} - 1) \mathbb{P}_{\R^k,p}(S)$, for any $\Z^k$-boundary~$S$ of compact support.

\medskip
\noindent
\textbf{Step 1.}
We first consider the case~$S$ is an integral polyhedral current
(so that, in particular, $S$ has finite mass and
satisfies the assumption~\eqref{hp:H}).
By Remark~\ref{nonempty}, we know that there exists~$\u\in\mathcal{Q}_S$.
According to Theorem~\ref{B}, 
there exists a lifting~$\ttheta\in\SBV_{\mathrm{loc}}(\R^d, \, \R^k)$ 
of~$\u$ such that
\begin{equation}\label{boundlifting}
\abs{\ttheta}_{\SBV,p} \leq 2k^{1-\frac{1}{p}}H_{p}(\u).
\end{equation}
This inequality, together with Remark~\ref{remarklifting1}, implies
\begin{equation}\label{boundlifting2}
\int_{\mathrm{S}(\ttheta)}\|\ttheta^{+}(x)-\ttheta^{-}(x)\|_p \, \d\mathcal{H}^{d-1}(x)\leq (2k^{1-\frac{1}{p}}-1) H_{p}(\u).
\end{equation}
From~\eqref{boundlifting2},  Theorem~\ref{A} and 
Theorem~\ref{C}, it follows that
\begin{equation}
2\pi \mathbb{P}_{\Z^k,p}(S) = \mathbb{L}(S)
\leq (2k^{1-\frac{1}{p}}-1) \, \mathbb{H}_{p}(S)
\leq (2k^{1-\frac{1}{p}}-1) \, 2\pi \, \mathbb{P}_{\R^k,p}(S)
\end{equation}
and hence
\begin{equation}\label{maininequalityPlateau}
\mathbb{P}_{\Z^k,p}(S)\leq (2k^{1-\frac{1}{p}}-1) \,\mathbb{P}_{\R^k,p}(S),
\end{equation}
as claimed.

\medskip
\noindent
\textbf{Step 2.} Let $S$ be a $\Z^k$-boundary of
dimension~$(d-2)$ in the ambient space $\R^d$. 
According to {\color{blue} \cite[\S~4.2.21]{FeBook},}
for each index~$i=1, \, \ldots, \, k$
there exists a sequence $(S^n_i)_{n\in \N}$
of integer-multiplicity, polyhedral $(d-2)$-cycles 
(that is, currents such that~$\partial S^n_i = 0$ 
--- see Definition \ref{definitionofcurrent}),
as well as sequences of currents~$(Q^{n}_i)_{n\in \N}$,
$(R^{n}_i)_{n\in \N}$ (of dimension~$d-2$, $d-1$ respectively) 
such that
\begin{equation} \label{E1}
S_i-S^n_i= \partial R^{n}_i+  Q^n_i
\end{equation}
and $\mathbb{M}(R^n_i)+\mathbb{M}(Q^n_i) \to 0$
as~$n\to\infty$. Set $S^{n}:=(S^n_1,\ldots,S^n_k)$.
We shall prove that, as~$n\to\infty$,
$\mathbb{P}_{\Z^k,p}(S^n)$, $\mathbb{P}_{\R^k,p}(S^n)$ 
converges to $\mathbb{P}_{\Z^k,p}(S)$, $\mathbb{P}_{\R^k,p}(S)$
respectively. By the previous step,
each~$S^n$ satisfies the inequality~\eqref{maininequalityPlateau}, 
so the theorem will follow by passing to the limit as~$n\to\infty$.
We shall only prove that~$\mathbb{P}_{\R^k,p}(S^n)\to \mathbb{P}_{\R^k,p}(S)$;
the claim $\mathbb{P}_{\Z^k,p}(S^n)\to \mathbb{P}_{\Z^k,p}(S)$
follows by similar arguments.
As both~$S_i$ and~$S^n_i$ are cycles, by taking the differential 
of both sides of~\eqref{E1} we obtain that~$\partial Q^n_i = 0$ 
for any~$n$ and~$i$. Then, by the isoperimetric 
inequality \cite[\S~4.2.10]{FeBook}, for each~$n$ and~$i$
there exists an integral current~$\bar{R}^n_i$ 
such that $\partial \bar{R}^n_i=Q^n_i$ and
\[
 \mathbb{M}(\bar{R}^n_i)\leq C\,\mathbb{M}(Q^n_i)^{\frac{d-1}{d-2}}
\]
for some constant~$C$ that depends on~$d$ only.
Let $C_i^n:=\bar{R}^n_i+R^{n}_i$. Then, we obtain
\begin{equation}
S_i-S^n_i=\partial C^{n}_i, \qquad
\mathbb{M}(C^n_i)\to 0 \quad \textrm{as } n\to\infty.
\end{equation}
Let $T=(T_1,\ldots, T_k)$ be a competitor for the problem $\mathbb{P}_{\R^k,p}(S)$. Define $C^n:=(C^n_1,\ldots,C^n_k)$ and $\bar{T}^n=T-C^n$. 
One has $\partial \bar{T}^n=S^n$ and $\mathbb{M}_p(T-\bar{T}^n)\to 0$
as~$n\to\infty$, because
\begin{equation}
\mathbb{M}_p(T-\bar{T}^n) \leq \sum_{i=1}^k \mathbb{M}(C^n_i) \to 0
\qquad \textrm{as } n\to\infty.
\end{equation}
Therefore, we obtain 
$\limsup_{n\to\infty} \mathbb{P}_{\R^k,p}(S^n)\leq \mathbb{M}_p(T)$
and, since~$T$ is arbitrary,
\[
 \limsup_{n\to\infty} \mathbb{P}_{\R^k,p}(S^n)\leq \mathbb{P}_{\R^k,p}(S).
\]
The opposite inequality, $\liminf_{n\to\infty} \mathbb{P}_{\R^k,p}(S^n)\geq \mathbb{P}_{\R^k,p}(S)$, is obtained by taking an arbitrary
$\R^k$-normal current~$T^n$ such that~$\partial T^n = S^n$ and defining
$T := T^n + C^n$.
\end{proof}


\begin{remark}
	Assume that we have the following equality
	\begin{equation}\label{assume}
	 \mathbb{P}_{\Z^k,p}(S)=\mathbb{H}_{p}(S)
	\end{equation} 
	(which is not always true, as we have seen in Remark \ref{RMequi}).
	Then, Theorem~\ref{A} implies
	\begin{equation*}
	\mathbb{P}_{\Z^k,p}(S)\leq \mathbb{P}_{\R^k,p}(S)
	\end{equation*}
	i.e.
	\begin{equation} \label{equalPlateau}
	\mathbb{P}_{\Z^k,p}(S)=\mathbb{P}_{\R^k,p}(S).
	\end{equation}
	In particular, due to Theorem~\ref{D}, a sufficient condition
	for~\eqref{equalPlateau} is that any~$\u\in \mathcal{Q}_{S}$ admits a lifting $\ttheta \in \SBV_{\mathrm{loc}}(\R^d;\R^k)$
	with~$\abs{\ttheta}_{\SBV,p} \leq 2H_{p}(\u)$.
\end{remark}

\begin{appendix}
\section{Appendix}
In this appendix, we prove some technical results
we used in the paper.
First, we recall some notation from Section~\ref{convolutionmollifier}.
Let~$\rho\in C^{\infty}_{\mathrm{c}}(\R^d)$
be a radial function (i.e.~$\rho(x) = \bar{\rho}(|x|)$ for any~$x\in\R^d$, for some function~$\bar{\rho}\colon\R\to\R$)
such that~$0 \leq \rho \leq 1$, 
$\spt(\rho)\subseteq\bar{\mathbb{B}}^d_1(0)$
and $\int_{\R^d} \rho(x) \, \d x = 1$.
For~$\epsilon > 0$, we define
\begin{equation} \label{mollifier-app}
 \rho_\epsilon(x) := \epsilon^{-d} \rho\left(\frac{x}{\epsilon}\right)
 \qquad \textrm{for } x\in\R^d.
\end{equation}
\begin{lemma} \label{laplacian}
For any~$\epsilon > 0$, there exists an integrable
vector field~$R_{\epsilon} \in C^{\infty}(\R^d \setminus \lbrace 0 \rbrace; \R^d)$ such that
\begin{itemize}
	\item[(i)] $\Div R_{\epsilon}=\delta_{0}-\rho_{\epsilon}$
	in the sense of~$\mathrm{D}^\prime(\R^d)$,
	\item[(ii)] $\spt(R_{\epsilon}) \subset \bar{\mathbb{B}}^d_{\epsilon}(0)$,
	\item[(iii)] $\lVert R_{\epsilon} \rVert_{L_{1}(\R^d)} \to 0$ as $\epsilon \to 0$.
\end{itemize}
\end{lemma}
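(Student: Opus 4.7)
The plan is to seek $R_\epsilon$ as a radial vector field, exploiting the radial symmetry of $\rho_\epsilon$. Writing $R_\epsilon(x) = g_\epsilon(\abs{x}) \frac{x}{\abs{x}}$, the standard identity
\[
 \Div R_\epsilon(x) = \frac{1}{r^{d-1}} \frac{\d}{\d r}\bigl(r^{d-1} g_\epsilon(r)\bigr) \qquad (r = \abs{x} > 0)
\]
reduces the pointwise requirement $\Div R_\epsilon = -\rho_\epsilon$ on $\R^d \setminus \{0\}$ to a first-order ODE for $r^{d-1} g_\epsilon(r)$. I will integrate from $0$, choosing the constant of integration so that $r^{d-1} g_\epsilon(r) \to 1/\alpha_d$ as $r \to 0^+$: this is the correct asymptotics for $R_\epsilon$ to behave like a multiple of the Newtonian vector field near the origin, thereby producing a Dirac mass of weight~$1$. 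Writing $\rho(x) = \bar\rho(\abs{x})$, this leads to the explicit formula
\[
 g_\epsilon(r) := \frac{1}{r^{d-1}} \left(\frac{1}{\alpha_d} - \int_0^r s^{d-1} \epsilon^{-d} \bar\rho(s/\epsilon) \, \d s\right) \textrm{ for } 0 < r < \epsilon, \qquad g_\epsilon(r) := 0 \textrm{ for } r \geq \epsilon.
\]

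Property~(ii) will then be automatic, the crucial point being that the normalization $\int_{\R^d} \rho_\epsilon = 1$, rewritten in polar coordinates as $\alpha_d \int_0^\epsilon s^{d-1}\epsilon^{-d} \bar\rho(s/\epsilon) \, \d s = 1$, forces the bracket to vanish exactly at $r = \epsilon$. The smoothness claim will follow because $\bar\rho$ is smooth and vanishes identically for $r \geq 1$, hence vanishes to infinite order at $r = 1$, so the bracket has a zero of infinite order at $r = \epsilon$ and the two branches of $g_\epsilon$ glue $C^\infty$-smoothly on $(0,\infty)$; combined with smoothness of $x \mapsto x/\abs{x}$ on $\R^d \setminus \{0\}$, this yields $R_\epsilon \in C^\infty(\R^d \setminus \{0\}; \R^d)$. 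The $L^1$-bound in~(iii) will be immediate from the trivial estimate $r^{d-1}\abs{g_\epsilon(r)} \leq 2/\alpha_d$, which gives
\[
 \norm{R_\epsilon}_{L^1(\R^d)} = \alpha_d \int_0^\epsilon \abs{g_\epsilon(r)} r^{d-1} \, \d r \leq 2\epsilon \xrightarrow[\epsilon \to 0]{} 0.
\]

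The main step will be to verify the distributional identity~(i). Since $R_\epsilon$ is merely $L^1$ with a singularity of order $\abs{x}^{1-d}$ at the origin, the divergence theorem cannot be applied globally; instead, for a test function $\phi \in C^\infty_{\mathrm{c}}(\R^d)$, I will compute $\int_{\R^d} R_\epsilon \cdot \nabla \phi \, \d x$ by integrating by parts on the exterior region $\{\abs{x} > \delta\}$, where $\Div R_\epsilon = -\rho_\epsilon$ holds pointwise. The boundary term at $\{\abs{x} = \delta\}$ equals $-g_\epsilon(\delta)\int_{\abs{x}=\delta} \phi \, \d \mathcal{H}^{d-1}$; using $g_\epsilon(\delta)\,\delta^{d-1} \to 1/\alpha_d$ together with $\int_{\abs{x}=\delta} \phi \, \d \mathcal{H}^{d-1} = \alpha_d \delta^{d-1}\phi(0) + o(\delta^{d-1})$, this contribution will converge to $-\phi(0)$ as $\delta \to 0$. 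Combining the two terms yields $\langle \Div R_\epsilon, \phi \rangle = \phi(0) - \int_{\R^d} \rho_\epsilon \phi \, \d x$, as required. This limiting identification of the singular part of $R_\epsilon$ with a regularized Newtonian field near the origin is the only non-routine point in the argument.
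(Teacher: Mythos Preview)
Your proof is correct and follows essentially the same approach as the paper: a radial ansatz reduces the problem to an ODE, the constant of integration is fixed so as to produce the Newtonian singularity at the origin, and the distributional identity is verified by excising a small ball around~$0$ and passing to the limit. The only cosmetic differences are that the paper first treats the case~$\epsilon = 1$ and then rescales, and writes the radial profile via~$\int_r^1$ rather than~$\tfrac{1}{\alpha_d} - \int_0^r$; the resulting vector fields coincide.
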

\begin{proof}
We shall prove that there exists 
an integrable vector field 
$R\in C^{\infty}(\R^d \setminus \lbrace 0 \rbrace; \R^d)$ such that
\begin{equation}\label{divequation}
\Div R=\delta_{0}-\rho
\end{equation}
as distributions in~$\R^d$ and $\spt(R)\subset \bar{\mathbb{B}}^d_{1}(0)$.
Then, the vector field $R_{\epsilon}$ defined by
\begin{equation}
R_{\epsilon}(x):=\epsilon^{-d+1}R\left(\frac{x}{\epsilon}\right)
\end{equation}
will satisfy all the desired properties.

We define
\begin{equation} \label{diverequation-phi}
 \xi (t) := \frac{1}{t^d}\int_{t}^{1}s^{d-1}\rho(s) \, \d s
 \qquad \textrm{for } t > 0
\end{equation}
and $R(x) := \xi(\vert x \vert) \, x$
for~$x\in \R^d \setminus \lbrace 0 \rbrace$.
We observe that, since $\spt\rho\subset\bar{\mathbb{B}}^d_{1}(0)$, 
we have $R(x)=0$ for any~$x\in \R^k$ with~$|x|\geq 1$.
Moreover, for $|x|\leq 1$ and $x\neq 0$ we have
\begin{equation}
\begin{aligned}
|R(x)|&\leq \frac{1}{d|x|^d}(1-|x|^d)|x|
&\leq  \frac{1}{d|x|^{d-1}} .
\end{aligned}
\end{equation}
Therefore, $R\in L^{1}(\R^d;\R^d)$. One has,
\begin{equation}\label{diverquation3}  
\Div R(x)=-\rho(x) \qquad
\textrm{for any } x\in\R^d\setminus\{0\}.
\end{equation}
Indeed, for any $x\neq 0$ we have:
\begin{equation}\label{diverequation4}
\begin{aligned}
\frac{\partial R_i(x)}{\partial x_i} 
&= \frac{\partial}{\partial x_i}\left(\xi(|x|)\right)x_i+\xi(|x|)
=\xi'(|x|)\frac{x^2_i}{|x|} +\xi(|x|).
\end{aligned}
\end{equation}
By differentiating~\eqref{diverequation-phi}, we obtain
\begin{equation} \label{diverequation4.5}
 \xi'(t)=-\frac{\rho(t)}{t}-\frac{d}{t}\xi(t)
\end{equation}
for any~$t > 0$. 
By substituting~\eqref{diverequation4.5} 
into~\eqref{diverequation4}, we obtain
\begin{equation}\label{diverequation5}
\frac{\partial R_i(x)}{\partial x_i} 
=-\frac{x^2_i}{|x|^2}\rho(|x|)-d\frac{x^2_i}{|x|^2}\xi(|x|)+\xi(|x|)
\end{equation}
and, by taking the sum over~$i = 1, \ldots, d$
in both sides of~\eqref{diverequation5}, 
\eqref{diverquation3} follows.
We are left to prove that
\begin{equation} \label{divR}
\Div R=\delta_{0}-\rho
\end{equation}
over $\R^d$ in the sense of distributions, that is,
\begin{equation}
-\int_{\R^d}\nabla \phi(x)\cdot R(x) \, \d x
= \phi(0) - \int_{\R^d}\rho(x) \phi(x) \, \d x
\end{equation}
for any $\phi \in C^{\infty}_{\mathrm{c}}(\R^d;\R)$.
Since $R\in L^{1}(\R^d; \R^d)$, one has
\begin{equation}\label{divequation2}
 \begin{split}
  -\int_{\R^d}\nabla \phi(x)\cdot R(x) \, \d x
  &= -\lim_{\epsilon \to 0} \int_{\R^d\setminus \mathbb{B}^d_{\epsilon}(0)}\nabla \phi(x)\cdot R(x) \, \d x \\
  &= \lim_{\epsilon \to 0} \left(\int_{\R^d\setminus \mathbb{B}^d_{\epsilon}(0)} \Div R(x) \, \phi(x) \, \d x 
  + \int_{\partial \mathbb{B}^d_{\epsilon}(0)}R(x)\cdot\frac{x}{\vert x \vert}\phi(x) \, \d\mathcal{H}^{d-1}(x)\right)
 \end{split}
\end{equation}
by the divergence theorem.
In the right hand side of~\eqref{divequation2},
we have
\begin{equation}\label{divequation2.5}
 \begin{split}
  \int_{\R^d\setminus \mathbb{B}^d_{\epsilon}(0)} \Div R(x) \, \phi(x) \, \d x 
  = -\int_{\R^d\setminus \mathbb{B}^d_{\epsilon}(0)} \rho(x) \, \phi(x) \, \d x 
  \to -\int_{\R^d}\rho(x) \phi(x) \, \d x
 \end{split}
\end{equation}
as $\epsilon \to 0$, because of~\eqref{diverquation3}.
As for the second term, 
we apply the definition of~$R$ to deduce
\begin{equation}
\begin{aligned}
\int_{\partial \mathbb{B}^d_{\epsilon}(0)}
 R(x)\cdot\frac{x}{\vert x \vert}\phi(x) \, \d\mathcal{H}^{d-1}(x)
&= \frac{1}{\epsilon^{d-1}} \int_{\partial \mathbb{B}^d_{\epsilon}(0)} \left(\int_{\epsilon}^1 s^{d-1}\rho(s) \, \d s\right)
 \phi(x) \, \d\mathcal{H}^{d-1}(x) \\
&= \mathcal{H}^{d-1}(\mathbb{S}^{d-1})
 \cdot \int_{\epsilon}^1 s^{d-1}\rho(s) \, \d s
 \cdot 
 \fint_{\partial \mathbb{B}^d_{\epsilon}(0)}\phi(x) \,  \d\mathcal{H}^{d-1}(x),
\end{aligned}
\end{equation}
where~$\fint$ denotes the integral average.
Since~$\rho$ is radial and~$\varphi$ is continuous, we deduce
\begin{equation} \label{divequation6}
 \int_{\partial \mathbb{B}^d_{\epsilon}(0)}
 R(x)\cdot\frac{x}{\vert x \vert}\phi(x) \, \d\mathcal{H}^{d-1}(x)\to \int_{\mathbb{B}^d_{1}(0)}\rho(x) \, \d x \cdot \phi(0) 
 = \phi(0)
\end{equation}
as $\epsilon \to 0$.
Combining~\eqref{divequation2}, \eqref{divequation2.5}
and~\eqref{divequation6}, we obtain~\eqref{divR}.
\end{proof}


In the next lemma, we consider the $p$-mass norm of a 
vector~$v\in\Lambda_{m,\R^{k}}(\R^{d})$, defined 
(as in Equation~\eqref{def_massnorm}) by
\[
 |v|_{\mass,p}:=\sup \left\{ \langle \omega, v \rangle\colon \omega \in \Lambda^{m}_{\R^{k}}(\R^{d}), \ \abs{\omega}_{\comass, p}\leq 1  \right\} \! .
\]
We will also consider the quantity
\begin{equation} \label{N-norm}
N(v):=\inf \left\{ \sum_{i=1}^l \|z_i\|_{p} \, |v_i| \colon v=\sum_{i=1}^l z_i \otimes v_{i}, \, z_i \in \R^k, \, v_{i} \mbox{ is a simple m-vector in } \R^d \right\} \! .
\end{equation}
The following lemma extends~\cite[\S~1.8.1]{FeBook} to the case~$k>1$.

\begin{lemma}\label{characterizationmassnorm}
 Let $v$ be an~$\R^k$-valued $m$-vector in~$\R^d$, 
 i.e. $v\in \Lambda_{m,\R^{k}}(\R^{d})$. Then,
 \begin{equation*}
  \abs{v}_{\mass,p}= N(v).
 \end{equation*}
\end{lemma}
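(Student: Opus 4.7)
The plan is to recognise $N$ as a norm on the finite-dimensional space $\Lambda_{m,\R^k}(\R^d)$ whose dual norm is exactly the $p$-comass, and then invoke finite-dimensional biduality.

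First I would verify that $N$ is a bona fide norm. Sub-additivity is immediate by concatenating two decompositions, positive homogeneity follows by rescaling the coefficients $z_i$, and non-degeneracy can be obtained a posteriori from the easy inequality $|v|_{\mass,p}\le N(v)$ (proved below) combined with the fact that $|\cdot|_{\mass,p}$ is already known to be a norm. The easy inequality goes as follows: for any decomposition $v=\sum_{i=1}^{\ell}z_i\otimes v_i$ with $v_i$ simple, and any $\omega\in\Lambda^m_{\R^k}(\R^d)$ with $|\omega|_{\comass,p}\le 1$, we use the duality pairing on $\R^k$ to write
\[
\langle \omega,v\rangle=\sum_{i=1}^\ell\langle\langle\omega;v_i,\cdot\rangle,z_i\rangle\le \sum_{i=1}^\ell\|\langle\omega;v_i,\cdot\rangle\|_p^*\,\|z_i\|_p\le \sum_{i=1}^\ell|v_i|\,\|z_i\|_p,
\]
the last inequality coming directly from the definition~\eqref{comass} of the $p$-comass (applied to the simple vector $v_i/|v_i|$). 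Taking the infimum over decompositions and the supremum over $\omega$ yields $|v|_{\mass,p}\le N(v)$.

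Next I would compute the dual norm
\[
N^*(\omega):=\sup\{\langle\omega,v\rangle \colon N(v)\le 1\}
\]
and show that $N^*(\omega)=|\omega|_{\comass,p}$ for every $\omega\in\Lambda^m_{\R^k}(\R^d)$. The inequality $N^*(\omega)\ge|\omega|_{\comass,p}$ is obtained by testing against vectors of the special form $v=z\otimes\tau$ with $\tau$ a simple $m$-vector of norm $\le 1$ and $z\in\R^k$ with $\|z\|_p\le 1$: such $v$ has $N(v)\le 1$, and
\[
\sup_{\|z\|_p\le 1}\langle\langle\omega;\tau,\cdot\rangle,z\rangle=\|\langle\omega;\tau,\cdot\rangle\|_p^*,
\]
so that taking the supremum over simple unit $\tau$ recovers $|\omega|_{\comass,p}$. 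Conversely, any $v$ with $N(v)\le 1$ admits a decomposition $v=\sum_i z_i\otimes v_i$ with $\sum_i\|z_i\|_p|v_i|$ arbitrarily close to $1$, and the same Hölder-type computation as in the first inequality above gives $\langle\omega,v\rangle\le|\omega|_{\comass,p}$; hence $N^*(\omega)\le|\omega|_{\comass,p}$.

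Finally, since $\Lambda_{m,\R^k}(\R^d)$ is finite-dimensional, $N$ coincides with its bidual $N^{**}$. By the definition~\eqref{def_massnorm} of the $p$-mass norm and the identification $N^*=|\cdot|_{\comass,p}$ just proved, we have $N^{**}(v)=|v|_{\mass,p}$, and therefore $N(v)=|v|_{\mass,p}$, as desired. The main technical obstacle is the upper bound $N^*(\omega)\le|\omega|_{\comass,p}$, because it requires controlling simultaneously all the simple vectors $v_i$ appearing in a decomposition of a competitor $v$; the argument works because Hölder's inequality on $\R^k$ disentangles the $(\R^k)^*$-factor from the geometric factor and the comass of $\omega$ supplies the uniform bound $\|\langle\omega;v_i/|v_i|,\cdot\rangle\|_p^*\le|\omega|_{\comass,p}$ for every index $i$.
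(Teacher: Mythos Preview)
Your proof is correct and follows essentially the same duality strategy as the paper: both prove the easy inequality $|v|_{\mass,p}\le N(v)$ by direct estimation on a decomposition, and both obtain the reverse inequality from the observation that testing on rank-one tensors $z\otimes\tau$ suffices to recover the comass. The only difference is packaging: the paper invokes Hahn--Banach directly to produce a supporting covector $\omega$ with $\langle\omega,v\rangle=N(v)$ and $\langle\omega,\tau\rangle\le N(\tau)$ for all $\tau$ (and then checks $|\omega|_{\comass,p}\le 1$ on rank-one $\tau$), whereas you phrase the same step as computing $N^*=|\cdot|_{\comass,p}$ and then appealing to finite-dimensional biduality $N=N^{**}$; these are equivalent formulations of the same argument.
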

\begin{proof}
Let~$v$ be an $\R^k$-valued $m$-vector in~$\R^d$. We first prove that $|v|_{\mass,p}\leq N(v)$. Suppose that $v=\sum_{i=1}^l z_i \otimes v_{i}$, where
$z_i \in \R^k$ and each~$v_{i}$ is a simple $m$-vector in~$\R^d$.
Let~$\omega \in \Lambda^{m}_{\R^{k}}(\R^{d})$ be such that
$\abs{\omega}_{\comass, p}\leq 1$. Then, one has
\begin{equation}
\begin{aligned}
\langle \omega, v \rangle&=\sum_{i=1}^{l} \langle \omega, z_i \otimes v_{i} \rangle 
\leq \sum_{i=1}^{l} \|z_i\|_{p} \abs{v_i},
\end{aligned}
\end{equation}
where we have made use of the fact that $\abs{\omega}_{\comass, p}\leq 1$ and each $z_i \otimes v_{i}$ has rank~$1$. To prove the reverse inequality, we shall make use of the Hahn-Banach theorem. 
From the definition~\eqref{N-norm}, it is 
not difficult to check that~$N$ is a seminorm
on~$\Lambda_{m, \, \R^k}(\R^d)$.
Then, by the Hahn-Banach theorem, there exists $\omega \in \Lambda^{m}_{\R^{k}}(\R^{d})$ such that
\begin{equation}
\langle \omega, v \rangle=N(v)
\end{equation}
and 
\begin{equation}\label{HahnBanach}
\langle \omega, \tau \rangle \leq N(\tau)
\end{equation}
for any $\tau \in \Lambda_{m, \R^{k}}(\R^{d})$.
We observe that the inequality~\eqref{HahnBanach}
holds true, in particular, for any rank-$1$
vector~$\tau=\theta \otimes t$, where $\theta \in \R^k$ and $t$ is a simple $m$-vector in $\R^d$. Then, by definition of the comass norm 
(see Equation~\eqref{comass}), Equation~\eqref{HahnBanach} implies
\begin{equation}
\abs{\omega}_{\comass, p} \leq 1.
\end{equation}
Therefore,
\begin{equation}
N(v)=\langle \omega, v \rangle 
\leq \abs{v}_{\mass,p} \, \abs{\omega}_{\comass, p} \leq \abs{v}_{\mass,p}
\end{equation}
which concludes the proof.
\end{proof}

\begin{remark}\label{characterizationmasscodimension1}
Let~$\omega$ be a $(\R^k)^*$-valued $1$-covector in $\R^d$.
We define
\[
 \bar{N}(\omega):=\inf \left\{ \sum_{i=1}^l \|z_i\|^*_q \, |\omega_i| \colon \omega=\sum_{i=1}^l z_i \otimes \omega_{i}, \, z_i \in (\R^k)^*, \, \omega_{i} \mbox{ is a 1-covector in } \R^d \right\} \! ,
\]
where~$q$ is the conjugate exponent of~$p$. 
(We recall that~$\norm{\cdot}^*_q$
denotes the dual of the~$q$-norm on~$\R^k$;
therefore, $\norm{\cdot}^*_q$ is the~$p$-norm on~$(\R^k)^*$.)
Then, 
\begin{equation}
\bar{N}(\omega)=N(\star \omega)=\abs{\star \omega}_{\mass,p}.
\end{equation}
To see this, we observe that if 
$\omega=\sum_{i=1}^l z_i \otimes \omega_{i}$, then
$\star \omega=\sum_{i=1}^l z_i \otimes \star \omega_{i}$
(up to identifying each~$z_i$ with its image 
through the isomorphism $(\R^k)^*\to\R^k$ induced by the canonical basis). Moreover, in codimension~$1$, each~$\star \omega_{i}$ is
a $(d-1)$-simple vector and $\abs{\star \omega_{i}}=\abs{\omega_{i}}$.
Therefore, $\bar{N}(\omega)\geq N(\star\omega)$.
The opposite inequality follows by similar arguments.
\end{remark}

\begin{lemma} \label{norm_pisdecreasing} 
	In $\R^k$, the norm $\| \cdot \|_{p}$ is decreasing in $p$
	--- that is, for any $1\leq p_1\leq p_2 \leq \infty$
	and any $z\in \R^k$ one has
	\begin{equation}\label{inequalitynorml_p} 
	\| z \|_{p_2} \leq \| z \|_{p_1}.
	\end{equation}
Moreover, for any $z\in \R^k$, one has
\begin{equation}\label{limitingofnorm}
\lim_{p\to \infty}\| z \|_{p}=\| z \|_{\infty}.
\end{equation}
\end{lemma}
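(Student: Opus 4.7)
The plan is to prove the two statements in turn, using standard and elementary arguments. First I would dispose of the trivial case $z = 0$, so throughout the rest of the proof one can assume $z \neq 0$ and exploit the positive homogeneity of all the norms involved.

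For the monotonicity~\eqref{inequalitynorml_p}, by homogeneity I would rescale so that $\|z\|_{p_1} = 1$. If $p_1 < \infty$, this means $\sum_{j=1}^k |z_j|^{p_1} = 1$, which forces $|z_j| \leq 1$ for every component. Hence $|z_j|^{p_2} \leq |z_j|^{p_1}$ whenever $p_2 \geq p_1$, so summing and taking the $1/p_2$-th power yields $\|z\|_{p_2} \leq 1 = \|z\|_{p_1}$ when $p_2 < \infty$. The case $p_2 = \infty$ is even easier, since $\max_j |z_j| \leq 1$ directly. If instead $p_1 = \infty$, then $p_2 = \infty$ too and the inequality is an equality. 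This handles all the cases in one stroke.

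For the limit statement~\eqref{limitingofnorm}, I would combine two bounds. On the one hand, \eqref{inequalitynorml_p} applied with $p_2 = \infty$ gives $\|z\|_\infty \leq \|z\|_p$ for every $p \in [1, \infty)$. On the other hand, for $p < \infty$ I would use the crude estimate
\begin{equation*}
    \|z\|_p = \left(\sum_{j=1}^k |z_j|^p\right)^{1/p}
    \leq \left(k \, \|z\|_\infty^p\right)^{1/p}
    = k^{1/p} \, \|z\|_\infty.
\end{equation*}
Since $k^{1/p} \to 1$ as $p \to \infty$, sandwiching yields $\|z\|_p \to \|z\|_\infty$.

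Neither step presents a real obstacle; these are textbook facts about $\ell^p$-norms on a finite-dimensional space. The only minor point worth being careful about is the bookkeeping across the cases $p_1 = \infty$, $p_2 = \infty$, and $p_1, p_2 < \infty$, but this is handled once and for all by separating them at the start.
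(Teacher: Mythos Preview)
Your proof is correct and follows essentially the same approach as the paper: both arguments normalize so that one of the two norms equals~$1$, use that each component then has modulus at most~$1$, and exploit the monotonicity of $t\mapsto t^p$ on $[0,1]$; the limit is obtained identically via the sandwich $\|z\|_\infty \leq \|z\|_p \leq k^{1/p}\|z\|_\infty$. The only cosmetic difference is that the paper normalizes by $\|z\|_{p_2}$ while you normalize by $\|z\|_{p_1}$, which is inconsequential.
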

\begin{proof}
	It is easy to see that, when $p_{2}=\infty$, the inequality~\eqref{inequalitynorml_p} holds true. 
	Let us assume that~$p_2<\infty$, and let $z\in \R^k$.
	If $z=0$, \eqref{inequalitynorml_p} is obviously satisfied.
	If $z\neq 0$, let $y_j=\frac{|z_j|}{\|z\|_{p_2}}$
	for any~$j=1, \, \ldots, \, k$
	and set $y=(y_1,\ldots,y_k)$. One has $|y_j|\leq 1$ and, hence,
	\begin{equation}
	|y_{j}|^{p_1}\geq |y_{j}|^{p_2}
	\end{equation}
	for any index~$j$. This implies 
	\begin{equation}\label{holder2}
	\begin{aligned}
	\|y\|_{p_1}\geq 1 
	\qquad \Longleftrightarrow \qquad
	\|z\|_{p_1}\geq \|z\|_{p_2},
	\end{aligned}
	\end{equation}
i.e. the norm $\| \cdot \|_{p}$ is decreasing in $p$. Moreover, by using Holder's inequality we have
	\begin{equation}\label{holder3}
	\| z \|_{p} \leq k^{\frac{1}{p}} \| z \|_{\infty}.
	\end{equation}
Therefore, from~\eqref{holder2} and~\eqref{holder3} we deduce that
\begin{equation}\label{holder4}
\| z \|_{\infty} \leq \| z \|_{p} \leq k^{\frac{1}{p}} \| z \|_{\infty}.
\end{equation}
Let $p$ tends to infinity in \eqref{holder4}, we obtain that
\begin{equation}
\lim_{p\to \infty}\| z \|_{p}=\| z \|_{\infty},
\end{equation}
which is what we needed to prove.
\end{proof}

\begin{lemma}\label{inequalityofmass} 
Let~$T$ be a normal $\R^k$-current 
in~$\R^d$. For any $1\leq p_1 \leq p_2 < \infty$, there holds
\begin{equation}
	\mathbb{M}_{p_2}(T)\leq \mathbb{M}_{p_1}(T)\leq k^{\frac{p_2-p_1}{p_1p_2}}\mathbb{M}_{p_2}(T)
\end{equation}
and for any~$1\leq p_1 \leq \infty$, there holds
\begin{equation}
 \mathbb{M}_{\infty}(T)\leq \mathbb{M}_{p_1}(T)\leq k^{\frac{1}{p_1}}\mathbb{M}_{\infty}(T).
\end{equation}
\end{lemma}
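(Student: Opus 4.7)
The plan is to transfer the $\ell^p$-norm comparisons on $\R^k$ to the comass norm on $\Lambda^{m}_{\R^k}(\R^d)$ by duality, and then feed the resulting comass inequalities into the dual definition of mass. The whole argument is short and elementary; the only point of care is to keep track of the conjugate exponent when moving between vectors in $\R^k$ and pairings with covectors.

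First, I would apply Lemma~\ref{norm_pisdecreasing} to the dual exponents $q_i := p_i^*$, which satisfy $q_2 \leq q_1$ whenever $p_1 \leq p_2$. Combined with H\"older's inequality, this yields, for every $z\in\R^k$,
\begin{equation*}
\|z\|_{q_1} \leq \|z\|_{q_2} \leq k^{\frac{1}{q_2}-\frac{1}{q_1}}\|z\|_{q_1} = k^{\frac{p_2-p_1}{p_1p_2}}\|z\|_{q_1}.
\end{equation*}
Recalling that $\|\cdot\|_{p_i}^* = \|\cdot\|_{q_i}$ on $\R^k$, and that the defining formula~\eqref{comass} for the comass takes a supremum over simple unit $m$-vectors $\tau$ of $\|\langle w;\tau,\cdot\rangle\|_{p_i}^*$, taking suprema in the two scalar estimates above gives
\begin{equation*}
|w|_{\comass,p_1} \leq |w|_{\comass,p_2} \leq k^{\frac{p_2-p_1}{p_1p_2}}|w|_{\comass,p_1}
\end{equation*}
for every $w\in\Lambda^m_{\R^k}(\R^d)$.

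Now I would substitute these comass estimates into the definition $\mathbb{M}_p(T) = \sup\{T(\omega)\colon \|\omega\|_{\comass,p}\leq 1\}$. The left inclusion $|\omega|_{\comass,p_1}\leq|\omega|_{\comass,p_2}$ implies that the unit ball for $|\cdot|_{\comass,p_2}$ is contained in that for $|\cdot|_{\comass,p_1}$, hence $\mathbb{M}_{p_2}(T) \leq \mathbb{M}_{p_1}(T)$. Conversely, if $\|\omega\|_{\comass,p_1}\leq 1$, the right comass inequality gives $\|\omega\|_{\comass,p_2}\leq k^{(p_2-p_1)/(p_1p_2)}$, so by rescaling the test form we obtain $\mathbb{M}_{p_1}(T)\leq k^{(p_2-p_1)/(p_1p_2)}\mathbb{M}_{p_2}(T)$. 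This settles the case $1\leq p_1\leq p_2<\infty$.

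For the case $p_2=\infty$ (where $q_2=1$), the same chain of inequalities still works verbatim, with exponent $1/q_2 - 1/q_1 = 1/p_1$. Alternatively, one may simply let $p_2\to\infty$ in the first case and invoke the pointwise convergence $\lim_{p\to\infty}\|z\|_p = \|z\|_\infty$ from Lemma~\ref{norm_pisdecreasing} to pass the comass, and then the mass, to the limit. In neither step is there a real obstacle: the entire lemma is a purely algebraic consequence of the corresponding finite-dimensional norm comparisons, mediated by duality.
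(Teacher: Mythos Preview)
Your proof is correct and follows essentially the same route as the paper: both transfer the elementary $\ell^p$-norm comparisons on $\R^k$ (monotonicity and H\"older) to comass inequalities via the definition~\eqref{comass}, and then pass to mass by duality. The only cosmetic difference is that you work directly with the conjugate exponents $q_i=p_i^*$ from the outset, whereas the paper states the norm inequalities for $p_1,p_2$ and then applies them to the conjugates; the content is identical.
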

\begin{proof}
For any $1\leq p_1 \leq p_2 < \infty$ and any~$z\in\R^k$,
according to Lemma~\ref{norm_pisdecreasing},
one has
\begin{equation}\label{decreasing}
\| z \|_{p_2} \leq \| z \|_{p_1}.
\end{equation}
On the other hand, by Holder's inequality we have
\begin{equation}\label{holder1}
\| z \|_{p_1} \leq k^{\frac{p_2-p_1}{p_1p_2}} \| z \|_{p_2}.
\end{equation}
Let $\omega \in C^{\infty}_{\mathrm{c}}\left(\R^{d};\Lambda^{d-1}_{\R^{k}}(\R^{d})\right)$. 
From \eqref{decreasing}, using the definition of 
the comass norm (see Equation~\eqref{comass})
and keeping into account that the H\"older conjugate
exponents satisfy~$p_1^* \geq p_2^*$, we deduce
\begin{equation}
\| \omega \|_{\comass, p_1} \leq \| \omega \|_{\comass, p_2},
\end{equation}
This inequality implies, via a duality argument, that
\begin{equation}
\mathbb{M}_{p_2}(T)\leq \mathbb{M}_{p_1}(T).
\end{equation}
In a similar way, from~\eqref{holder1} we deduce
\begin{equation}
\begin{aligned}
\| \omega \|_{\comass, p_2} 
\leq k^{\frac{p^{*}_1-p^{*}_2}{p^{*}_1p^{*}_2}} \, \| \omega \|_{\comass, p_1}
&=k^{\frac{p_2-p_1}{p_1p_2}} \, \| \omega \|_{\comass, p_1}
\end{aligned}
\end{equation}
and hence, by duality,
\begin{equation*}
 \mathbb{M}_{p_1}(T)\leq k^{\frac{p_2-p_1}{p_1p_2}} \, \mathbb{M}_{p_2}(T).
\end{equation*}
When~$p_2 = \infty$, similar arguments apply.
\end{proof}

The next lemma summarizes a few results on 
the comparison between the comass norm of a $(\R^k)^*$-form
(respectively, the mass norm of a~$\R^k$-valued vector)
and the comass (respectively, mass) norm of its components.
Given a (classical) form~$\sigma\in\Lambda^m(\R^d)$ and a 
(classical) vector~$\tau\in\Lambda_m(\R^d)$, we denote
by~$\abs{\sigma}_{\comass}$ the (classical) comass norm of~$\sigma$
and by~$\abs{\tau}_{\mass}$ the (classical) mass norm of~$\tau$.
\begin{lemma} \label{comparenorms} 
 Let~$w\in \Lambda^{m}_{\R^{k}}(\R^{d})$
 and let~$w_{i}=\langle w; \cdot , e_{i} \rangle\in\Lambda^m(\R^d)$
 be the~$i$-th component of~$w$, for~$i=1, \, \ldots, \, k$.
 Let~$p\in [1, \infty]$ and~$q\in [1, \infty]$
 be conjugate exponents, such that~$1/p + 1/q = 1$.
 If~$p\in (1, \infty]$, then there holds
 \begin{equation} \label{comparison-mass1}
  \max_{1\leq i \leq k} \ \abs{w_i}_{\comass}
  \leq \abs{w}_{\comass, p} \leq \left(\sum_{i=1}^k \abs{w_i}_{\comass}^{q}\right)^{\frac{1}{q}} \! ,
 \end{equation}
 while for~$p=1$, there holds
 \begin{equation} \label{comparison-mass2}
  \abs{w}_{\comass, 1} = \max_{1\leq i \leq k} \ \abs{w_i}_{\comass}.
 \end{equation}
 In a similar way, let~$v\in\Lambda_{m,\R^{k}}(\R^{d})$
 be a vector with components~$v_1$, \ldots, $v_k\in\Lambda_m(\R^d)$.
 If~$1 \leq p < \infty$, then there holds
 \begin{equation} \label{comparison-mass4}
 \left(\sum_{i=1}^k \abs{v_i}^p\right)^{\frac{1}{p}} \leq \left(\sum_{i=1}^k \abs{v_i}_{\mass}^p\right)^{\frac{1}{p}}
 \leq \abs{v}_{\mass,p} \leq \sum_{i=1}^k \abs{v_i}_{\mass},
 \end{equation}
 while for~$p = \infty$ we have
 \begin{equation} \label{comparison-mass5}
 \max_{1 \leq i \leq k} \ \abs{v_i} \leq \max_{1 \leq i \leq k} \ \abs{v_i}_{\mass}
 \leq \abs{v}_{\mass,\infty} \leq \sum_{i=1}^k \abs{v_i}_{\mass}.
 \end{equation}
 where~$\abs{\cdot}$ stands for the Euclidean norm. 
\end{lemma}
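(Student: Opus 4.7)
The plan is to derive the covector bounds~\eqref{comparison-mass1}--\eqref{comparison-mass2} by unwinding the definition of the $p$-comass in~\eqref{comass}, and then to obtain the vector bounds~\eqref{comparison-mass4}--\eqref{comparison-mass5} by duality through Lemma~\ref{characterizationmassnorm} and a Hölder-conjugate test-form construction.

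For the covector part, I would first identify $((\R^k)^*,\norm{\cdot}_p^*)$ with $(\R^k,\norm{\cdot}_q)$, so that for $w=\sum_i w_i\,e^*_i$ and any simple unit $m$-vector $\tau$ one has $\langle w;\tau,\cdot\rangle=(\langle w_i,\tau\rangle)_{i=1}^k$ and hence
\[
 \abs{w}_{\comass,p}=\sup\Bigl\{\bigl\|(\langle w_i,\tau\rangle)_i\bigr\|_q:\tau\in\Lambda_m(\R^d)\text{ simple},\ \abs{\tau}\le 1\Bigr\}.
\]
The lower bound in~\eqref{comparison-mass1} is then immediate: for each fixed index $i$ one restricts to a simple unit $\tau$ that nearly realizes $\abs{w_i}_{\comass}$ and retains only the $i$-th coordinate. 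The upper bound in~\eqref{comparison-mass1} follows from the pointwise estimate $\abs{\langle w_i,\tau\rangle}\le\abs{w_i}_{\comass}$ (valid for every simple $\tau$ with $\abs{\tau}\le 1$) combined with the monotonicity of the $q$-norm. The $p=1$ case~\eqref{comparison-mass2} is the degenerate instance $q=\infty$, in which the outer supremum over $\tau$ and the inner maximum over $i$ commute.

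For the vector bounds, I would use the duality characterization $\abs{v}_{\mass,p}=\sup\{\langle\omega,v\rangle:\abs{\omega}_{\comass,p}\le 1\}$ provided by Lemma~\ref{characterizationmassnorm}. The right-most inequality in~\eqref{comparison-mass4}--\eqref{comparison-mass5} is the easier one: starting from $v=\sum_i e_i\otimes v_i$ and further decomposing each $v_i$ as a finite sum of simple $m$-vectors realizing $\abs{v_i}_{\mass}$ up to $\varepsilon$, the characterization of the mass norm in Lemma~\ref{characterizationmassnorm} yields $\abs{v}_{\mass,p}\le\sum_i\abs{v_i}_{\mass}$, because $\norm{c\,e_i}_p=\abs{c}$ for every scalar $c$. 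The middle estimate $(\sum_i\abs{v_i}_{\mass}^p)^{1/p}\le\abs{v}_{\mass,p}$ is the crux: for each $i$ I would pick $\omega_i\in\Lambda^m(\R^d)$ of classical comass at most one with $\langle\omega_i,v_i\rangle=\abs{v_i}_{\mass}$, and set $\omega=\sum_i a_i\,e^*_i\otimes\omega_i$, where $a_i=\abs{v_i}_{\mass}^{p-1}$ is the Hölder-dual vector to $(\abs{v_i}_{\mass})_i$. The already established covector bound then gives $\abs{\omega}_{\comass,p}\le\norm{a}_q=(\sum_i\abs{v_i}_{\mass}^p)^{1/q}$, while $\langle\omega,v\rangle=\sum_i\abs{v_i}_{\mass}^p$, so that dividing produces the desired lower bound on $\abs{v}_{\mass,p}$. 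The leftmost inequality reduces to the scalar fact $\abs{v_i}\le\abs{v_i}_{\mass}$, which holds because any expression $v_i=\sum_j c_j\,v_{i,j}$ with $v_{i,j}$ simple and unit is controlled by the triangle inequality. Finally, the $p=\infty$ case~\eqref{comparison-mass5} is handled either by the same argument with $q=1$ in place of $q>1$, or as a limit $p\to\infty$ using Lemma~\ref{inequalityofmass}.

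I expect the main obstacle to be the construction of the test form $\omega$ in the middle inequality: one must ensure that the covector upper bound~\eqref{comparison-mass1} and the Hölder duality on $\R^k$ interact cleanly, so that the factor $\norm{a}_q$ produced by~\eqref{comparison-mass1} matches \emph{exactly} the duality pairing between $b=(\abs{v_i}_{\mass})_i$ and $a$, rather than losing a dimensional constant. Once the correct choice $a_i=\abs{v_i}_{\mass}^{p-1}$ is identified (with the natural convention when some $\abs{v_i}_{\mass}$ vanishes or when $p=\infty$), what remains is a routine Hölder computation.
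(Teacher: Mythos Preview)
Your proposal is correct and follows essentially the same route as the paper: the covector bounds are obtained exactly as you describe, and for the vector bounds the paper likewise constructs the test form $\bar{w}_i=\lambda\abs{v_i}_{\mass}^{p-1}w_i$ with Hölder-dual weights to get the middle inequality. The only cosmetic difference is that for the rightmost inequality $\abs{v}_{\mass,p}\le\sum_i\abs{v_i}_{\mass}$ the paper argues by duality (using $\abs{w_i}_{\comass}\le 1$ from~\eqref{comparison-mass1}) rather than via the primal characterization in Lemma~\ref{characterizationmassnorm}, but both arguments are equally short.
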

\begin{remark}
 Equations~\eqref{comparison-mass1} and~\eqref{comparison-mass4} imply that
 \[
  \abs{w}_{\comass,\infty} = \sum_{i=1}^k \abs{w_i}_{\comass}, \qquad
  \abs{v}_{\mass,1} = \sum_{i=1}^k \abs{v_i}_{\mass}
 \]
 for any~$w\in\Lambda^{m}_{\R^{k}}(\R^{d})$
 and any~$v\in\Lambda_{m, \R^k}(\R^d)$.
\end{remark}

\begin{proof}[Proof of Lemma~\ref{comparenorms}]
We recall that~$\norm{\cdot}^*_p$ is the dual norm to~$\norm{\cdot}_p$,
i.e. the~$q$-norm on the dual space~$(\R^k)^*$.

\medskip
\noindent
\textbf{Proof of~\eqref{comparison-mass1}
and~\eqref{comparison-mass2}.}
Let $p>1$. By the definition of comass norm, one has
 \begin{equation}\label{comassappendix}
 \begin{aligned}
 \abs{w}_{\comass, p}&:=\sup \left\lbrace 
  \norm{\langle w ; \tau, \cdot \rangle}^{*}_{p} 
  \colon \vert \tau \vert \leq 1, \ \tau 
  \mbox{ is a simple $m$-vector}\right\rbrace\!\\
 &= \sup \left\lbrace 
  \left(\sum_{i=1}^k \abs{\langle w_i, \tau \rangle}^{q}\right)^{\frac{1}{q}} 
  \colon \vert \tau \vert \leq 1, \ \tau 
  \mbox{ is a simple $m$-vector}\right\rbrace\! \\
 &\leq \left(\sum_{i=1}^k \sup \left\{
  \abs{\langle w_i, \tau \rangle}\colon \vert \tau \vert \leq 1, \ \tau 
  \mbox{ is a simple $m$-vector}
  \right\}^{q} \right)^{\frac{1}{q}}\\
 &=\left(\sum_{i=1}^k \abs{w_i}_{\comass}^{q}\right)^{\frac{1}{q}}.
 \end{aligned}
 \end{equation}
 A similar argument shows that
 \begin{equation}\label{comassappendix2}
 \begin{aligned}
  \abs{w}_{\comass, 1} 
  &\leq \max_{1\leq i \leq k} \abs{w_i}_{\comass}.
 \end{aligned}
 \end{equation}
 Moreover, for each~$i\in\{1, \ldots, k\}$, there exists
 a simple vector~$\tau_i\in\Lambda_m(\R^d)$
 such that~$\abs{\tau_i} = 1$ and~$\abs{w_i}_{\comass} = \langle w_i; \tau_i\rangle$.
 Then, for any index~$i$, we have
 \begin{equation} \label{comassappendix3}
  \begin{aligned}
   \abs{w}_{\comass, p} 
   &= \sup\left\{ \left(\sum_{i=1}^k 
    \abs{\langle w_i, \tau \rangle}^{q}\right)^{\frac{1}{q}} \colon 
    \vert \tau \vert \leq 1, \ \tau \mbox{ is a simple $m$-vector}\right\} \\
   &\geq \left(\sum_{i=1}^k  
    \abs{\langle w_i, \tau_i \rangle}^{q}\right)^{\frac{1}{q}} 
   \geq \abs{\langle w_i, \tau_i \rangle} 
   = \abs {w_i}_{\comass}.
  \end{aligned}
 \end{equation}
 Combining~\eqref{comassappendix3} with~\eqref{comassappendix}
 and~\eqref{comassappendix2}, we deduce~\eqref{comparison-mass1}
 and~\eqref{comparison-mass2}.


 \medskip
 \noindent
 \textbf{Proof of~\eqref{comparison-mass4}.}
 Let us take a vector~$v\in\Lambda_{m,\R^{k}}(\R^{d})$
 with components~$v_1$, \ldots, $v_k\in\Lambda_m(\R^d)$.
 Let $p \in [1, \, \infty]$. 
 We have
 \begin{equation}\label{comassappendix4}
  \begin{aligned}
   \abs{v}_{\mass,p}
   &:= \sup \left\lbrace \langle w, v \rangle\colon 
    w \in \Lambda^{m}_{\R^k}(\R^d), \ \abs{w}_{\comass, p} \leq 1\right\rbrace\!\\
   &\geq  \sup \left\lbrace  \sum_{i=1}^k \langle w_i, v_i \rangle\colon
    \ w \in \Lambda^{m}_{\R^k}(\R^d), \ \|w^p\|_{q} \leq 1\right\rbrace\!,
  \end{aligned}
 \end{equation}
 where $w^p :=(\abs{w_1}_{\comass},\ldots,\abs{w_k}_{\comass})\in\R^k$.
 In~\eqref{comassappendix4}, we have made use of the inequalities~\eqref{comparison-mass1} and~\eqref{comparison-mass2}.
 By the Hahn-Banach Theorem, for each $i=1,\ldots,k$ 
 there exists~$w_{i}\in \Lambda^{m}(\R^d)$ such that
 $\abs{w_{i}}_{\comass} \leq 1$ and
 \begin{equation}
  \langle w_{i}, v_i \rangle=\abs{v_i}_{\mass}.
 \end{equation} 
 Now, suppose for a moment that~$p < \infty$.
 Let $\bar{w}_{i}:=\lambda \abs{v_{i}}^{p-1}_{\mass}w_{i}$, 
 where $\lambda:=\left(\sum_{j=1}^k \abs{v_j}_{\mass} ^{p}\right)^{\frac{1}{p}-1}$. Then,
 it is not difficult to check that the form~$\bar{w} = (\bar{w}_1, \ldots, \bar{w}_k)$ satisfies~$\|\bar{w}^p\|_{q} \leq 1$. 
 Moreover, the inequality~\eqref{comassappendix4} implies
 \begin{equation}
  \langle \bar{w}, v \rangle
   =\left(\sum_{j=1}^k \abs{v_j}_{\mass}^p\right)^{\frac{1}{p}}
   \leq \abs{v}_{\mass,p}.
 \end{equation}
 Moreover, by the definition of mass norm, 
 we obtain that for any $i=1,\ldots,k$
 \begin{equation} \label{blarg}
  \abs{v_i}\leq \abs{v_i}_{\mass}.
 \end{equation}
 This implies that
 \begin{equation}
  \left(\sum_{i=1}^k \abs{v_i}^p\right)^{\frac{1}{p}} \leq \left(\sum_{i=1}^k \abs{v_i}_{\mass}^p\right)^{\frac{1}{p}}.
 \end{equation}
 In case~$p=\infty$ we proceed in a similar way.
 We fix an index~$i\in\{1, \ldots, k\}$,
 define~$\bar{w}_i := w_i$, $\bar{w}_j := 0$
 for~$i\neq j$, and~$\bar{w} := (\bar{w}_1, \ldots, \bar{w}_k)$.
 From~\eqref{comassappendix4} and~\eqref{blarg}, we deduce
 \begin{equation} 
   \abs{v_i} \leq \abs{v_i}_{\mass} = \langle \bar{w}, v\rangle 
   \leq \abs{v}_{\mass,p} \qquad \textrm{for any }
   i\in\{1, \ldots, k\}.
 \end{equation}
 In order to complete the proof of~\eqref{comparison-mass4} 
 and~\eqref{comparison-mass5}, it only remains to prove the 
 following inequality:
 \begin{equation} \label{blargbis}
  \abs{v}_{\mass,p} \leq \sum_{i=1}^k \abs{v_i}_{\mass}
 \end{equation}
 for any~$p\in [1, \, \infty]$.
 We observe that for any $w=(w_1,\ldots,w_k) \in \Lambda^{m}_{\R^{k}}(\R^{d})$ such that $\abs{w}_{\comass, p} \leq 1$, there holds
 \begin{equation}
  \begin{aligned}
   \langle w, v \rangle 
   &= \sum_{i=1}^k \langle w_i, v_i \rangle
   \leq \sum_{i=1}^k \abs{v_i}_{\mass},
  \end{aligned}
 \end{equation}
 because of~\eqref{comassappendix3}. 
 By taking the supremum over~$w$, \eqref{blargbis} follows.
\end{proof}

\end{appendix}


\begin{small}
 \bigskip
 \noindent
 \textsc{G. Canevari,} \\
 Dipartimento di Informatica, Universit\`a di Verona,\\
 Strada le Grazie~15, 37134 Verona, Italy.\\
 \textit{E-mail address:} giacomo.canevari@univr.it
 
 \bigskip
 \noindent
 \textsc{V.P.C. Le},  \footnote{This work was initiated and a large part of the project was carried out when the second author was a postdoc at the Dipartimento di Informatica, Università di Verona, Italy.} 
 \\
 Institut für Mathematik, Universität Heidelberg,\\
 Im Neuenheimer Feld 205, 69120 Heidelberg, Germany.\\
 \textit{E-mail address:} cuong.le@uni-heidelberg.de

\end{small}

\end{document}